\newtheorem{thm}{Theorem}[section]
\newtheorem{lem}[thm]{Lemma}
\newtheorem{cor}[thm]{Corollary}
\newtheorem{defi}[thm]{Definition}
\newtheorem{prop}[thm]{Proposition}
\newtheorem{rem}[thm]{Remark}
\newenvironment{proof}{\noindent {\bf Proof \phantom{9}}}
{\hfill $\square$ \vspace{0.25cm}}
\def\be{\begin{eqnarray}}
\def\ee{\end{eqnarray}}
\def\ben{\begin{eqnarray*}}
\def\een{\end{eqnarray*}}
\numberwithin{equation}{section}
\numberwithin{figure}{section}
\def\be{\begin{eqnarray}}
\def\ee{\end{eqnarray}}
\newcommand{\RR}{\mathbb{R}}
\newtheorem{example}[thm]{Example}
\newtheorem{hypothesis}[thm]{Hypothesis}
\def\bit{\begin{itemize}}
\def\eit{\end{itemize}}
\def\bc{\begin{center}}
\def\ec{\end{center}}
\def\bthm{\begin{thm}}
\def\ethm{\end{thm}}
\def\bcor{\begin{cor}}
\def\ecor{\end{cor}}
\def\bprop{\begin{prop}}
\def\eprop{\end{prop}}
\def\blem{\begin{lem}}
\def\elem{\end{lem}}
\def\brem{\begin{rem}}
\def\erem{\end{rem}}
\def\bdes{\begin{description}}
\def\edes{\end{description}}
\def\beq{\begin{equation}}
\def\eeq{\end{equation}}
\def\benu{\begin{enumerate}}
\def\eenu{\end{enumerate}}
\def\beqar{\begin{eqnarray}}
\def\eeqar{\end{eqnarray}}
\def\beqarr{\begin{eqnarray*}}
\def\eeqarr{\end{eqnarray*}}
\def\RR{{\mathbb R}}  % bold R
\def\me{\medskip\noindent}
\def\bi{\bigskip\noindent}
\def\part{\partial}
\def\d#1dt{\frac{d#1}{dt}}    %%variable ODE left side
\newcommand{\dst}{\displaystyle}
\newcommand{\mut}{\zeta}
\title{A rigorous model study of the adaptative dynamics of Mendelian
  diploids. }
\author{ Pierre Collet\thanks{CPHT, Ecole Polytechnique, 
CNRS UMR 7644, route de Saclay, 91128 Palaiseau Cedex-France; e-mail:
collet@cpht.polytechnique.fr}, 
Sylvie M\'el\'eard\thanks{CMAP, Ecole Polytechnique, CNRS, route de
Saclay, 91128 Palaiseau Cedex-France; e-mail:
sylvie.meleard@polytechnique.edu.}, 
Johan A.J. Metz \thanks{Institute of Biology \& Department of Mathematics, Leiden University,  \& NCB Naturalis, Leiden, Netherlands \& Ecology and Evolution Program, Institute of Applied Systems Analysis, Laxenburg. Austria; e-mail: j.a.j.metz@biology.leidenuniv.nl}}
\date{\today}
\begin{document}

\maketitle

\begin{abstract}
Adaptive dynamics so far has been put on a rigorous footing only for clonal inheritance. We extend this to sexually reproducing diploids, although admittedly still under the restriction of an unstructured population with Lotka-Volterra-like dynamics and single locus genetics (as in Kimura's 1965 infinite allele model). We prove under the usual smoothness assumptions, starting from a stochastic birth and death process model, that, when advantageous mutations are rare and mutational steps are not too large, the population behaves on the mutational time scale (the 'long' time scale of the literature on the genetical foundations of ESS theory) as a jump process moving between homozygous states (the trait substitution sequence of the adaptive dynamics literature). Essential technical ingredients are a rigorous estimate for the probability of invasion in a dynamic diploid population, a rigorous, geometric singular perturbation theory based, invasion implies substitution theorem, and the use of the Skorohod $M_1$ topology to arrive at a functional convergence result. In the small mutational steps limit this process in turn gives rise to a differential equation in allele or in phenotype space of a type referred to in the adaptive dynamics literature as 'canonical equation'. 
\end{abstract}

\bigskip
\emph{MSC 2000 subject classification:} 92D25, 60J80, 37N25, 92D15, 60J75
\bigskip

\emph{Key-words:}  individual-based mutation-selection model, invasion fitness for diploid populations,  adaptive dynamics, canonical equation,  polymorphic evolution sequence, competitive Lotka-Volterra system.

\pagebreak

\section{Introduction}
\label{sec:intro}

Adaptive dynamics (AD) aims at providing an ecology-based framework for scaling up from the micro-evolutionary process of gene substitutions to meso-evolutionary time scales and phenomena (also called long term evolution in papers on the foundations of ESS theory, that is, meso-evolutionary statics, cf \cite{E83,Eip,EFB98,EF01}). One of the more interesting phenomena that AD has brought to light is the possibility of an emergence of phenotypic diversification at so-called branching points, without the need for a geographical substrate \cite{M96,GKMM98,DD00}.  This ecological tendency may in the sexual case induce sympatric speciation \cite{DD99}. However,  a population subject to mutation limitation and initially without variation stays essentially uni-modal, closely centered around a type that evolves continuously, as long as it does not get in the neighborhood of a branching point. In this paper we focus on the latter aspect of evolutionary trajectories.

AD was first developed, in the wake of \cite{HS87,marrow-law-al-92,MNG92}, as a systematic framework at a physicist level of rigor  by Diekmann and Law  \cite{DL96} and by Metz and Geritz and various coworkers \cite{MNG92,M96,GKMM98}. The first two authors started from a Lotka-Volterra style birth and death process while the intent of the latter authors was more general, so far culminating in \cite{DMM08}. The details for general physiologically structured populations were worked out at a physicist level of rigor in \cite{DMM08} while the theory was put on a rigorous mathematical footing by Champagnat and M\'el\'eard and coworkers  \cite{CFM06,C06,MT09}, and recently also from a different perspective by Peter Jagers and coworkers \cite{KlebSagVatHacJag11}. All these papers deal only with clonal models. In the meantime a number of papers have appeared that deal on a heuristic basis with special models with Mendelian genetics (e.g. \cite{KG99,VD99,VD00,VDD06,PP06,PB08}), while the general biological underpinning for the ADs of Mendelian populations is described in \cite{Mip}. In the present paper we outline a mathematically rigorous approach along the path set out in \cite{CFM06,C06}, with proofs for those results that differ in some essential manner between the clonal and Mendelian cases. It should be mentioned though that just as in the special models in \cite{KG99,VD99,VD00,PP06,PB08} and contrary to the treatment in \cite{Mip}  we deal still only with the single locus infinite allele case (cf Kimura \cite{MK65}), while deferring the infinite loci case to a future occasion.   

Our reference framework is a diploid  population in which each individual's ability to survive and reproduce depends only on a quantitative phenotypic trait determined by its genotype, represented by the types of  two alleles on a single locus.  Evolution of the trait distribution in the population results from three basic mechanisms: \emph{heredity}, which transmits traits to new offsprings thus ensuring the extended existence of a trait distribution, \emph{mutation}, generating novel variation in the trait values in the population, and  \emph{selection} acting on these trait values as a result of trait dependent differences in fertility and mortality. Selection is made  frequency dependent by the competition of individuals for limited resources, in line with the general ecological spirit of AD. Our goal is to capture in a simple manner the interplay between these different mechanisms.

%\bigskip
%\bigskip
%\begin{flushleft} {\Large{ \textbf {Part I: \textit{Model formulation and main results}}}} \end{flushleft}

%\noindent In this part we consider our reference model and its relation to biological tradition and our picture of reality  (Section \ref{sec:model}) and describe the main results that we have obtained for it (Sections \ref{short term} to \ref{CE}). In Section \ref{short term} we consider the micro-evolutionary process of gene frequency change, while in Sections \ref{longterm} and  \ref{CE} we consider meso-evolutionary processes, in Section \ref{longterm} trait substitution sequences and in Section \ref{CE} their small mutational step limit known as the canonical equation of adaptive dynamics.

\section{The Model}
\label{sec:model}

We consider a  Mendelian population and a hereditary trait that is determined by the two alleles on but a single locus with many possible alleles (the infinite alleles model of Kimura \cite{MK65}).  These alleles are characterized by an allelic trait $u$. Each individual $i$ is thus characterized by its two  allelic trait values $(u^i_{1}, u^i_{2})$, hereafter referred to as its genotype, with corresponding phenotype $\phi(u^i_{1}, u^i_{2})$, with $\phi :  \mathbb{R}^m \rightarrow  \mathbb{R}^n$. In order to keep the technicalities to a minimum we shall below proceed on the assumption that $n=m=1$. In the Discussion we give a heuristic description of how the extension to general $n$ and $m$ can be made. When we are dealing with a fully homozygous population we shall refer to its unique allele as $A$ and when we consider but two co-circulating alleles we refer to these as $A$ and $a$.

\noindent We make the standard assumptions that $\phi$ and all other coefficient functions are smooth and that there are no parental effects, so that $\phi(u_1,u_2)=\phi(u_2,u_1)$, which has as immediate consequence that if $u_a=u_A+\zeta$, $| \zeta| \ll 1$,  then $\phi(u_A,u_a)=\phi(u_A,u_A)+\partial_2\phi(u_A,u_A)\zeta+\mathrm{O}(\zeta^2)$ and $\phi(u_a,u_a)=\phi(u_A,u_A)+2\partial_2\phi(u_A,u_A)\zeta+\mathrm{O}(\zeta^2)$, i.e., the genotype to genotype map is locally additive, $\phi(u_A,u_a) \approx \left(\phi(u_A,u_A)+\phi(u_a,u_a)\right)/2$, and the same holds good for all quantities that smoothly depend on the phenotype.

\begin{rem}\rm
The biological justification for the above assumptions is that the evolutionary changes that we consider are not so much changes in the coding regions of the gene under consideration as in its regulation. Protein coding regions are in general preceded by a large number of relatively short regions where all sorts of regulatory material can dock. Changes in these docking regions lead to changes in the production rate of the gene product. Genes are more or less active in different parts of the body, at different times during development and under different micro-environmental conditions. The allelic type $u$ should be seen as a vector of such expression levels. The genotype to phenotype map $\phi$ maps these expression levels to the phenotypic traits under consideration. It is also from this perspective that we should judge the assumption of smallness of mutational steps $\zeta$: the influence of any specific regulatory site among its many colleagues tends to be relatively minor. 
\end{rem}

The individual-based microscopic model from which we start is a stochastic birth and death process, with density-dependence through additional deaths from ecological competition, and Mendelian reproduction with mutation. We assume that the population's size scales with a parameter $K$  tending to infinity while the effect of the interactions between individuals scales with ${1\over K}$. This allows taking limits in which we count  individuals weighted with ${1\over K}$. As an interpretation think of individuals that live in an area of size $K$ such that the individual effects get diluted with area, e.g. since individuals compete for living space, with each individual taking away only a small fraction of the total space,  the probability of finding a usable bit of space is proportional to the relative frequency with which such bits are around.

\subsection{Model setup}
The allelic trait space ${\cal U}$ is assumed to be a closed and bounded interval of $\mathbb{R}$.  Hence the phenotypic trait space is compact.  For any $(u_{1}, u_{2})\in{\cal U}^2$, we introduce the following demographic parameters, which are all assumed to be smooth functions of the allelic traits and thus bounded. Moreover, these parameters are assumed to depend in principle on the allelic traits through the intermediacy of the phenotypic trait.  Since the latter dependency  is  symmetric, we assume that all coefficient functions defined below are symmetric in the allelic traits. 

\begin{description}
\item[$f(u_{1}, u_{2})\in\mathbb{R}_+$]: the per capita birth rate (fertility) of an  individual
  with genotype $(u_{1}, u_{2})$.
\item[$D(u_{1}, u_{2})\in\mathbb{R}_+$]: the background death rate of an individual with genotype $(u_{1}, u_{2})$.
%  \item[${C(u_{1}, u_{2})\over K}\in\mathbb{R}_+$]: the rate of ``logistic'' death for an
  %individual holding trait $(u_{1}, u_{2})$, due to a mean field interaction from the other individuals.

%\item[$r(x):=D^0(u_{1}, u_{2})-\mu(x)$]: the ``natural'' growth rate of
%trait $x$.
\item[$K\in\mathbb{N}$]: a parameter scaling  the per capita impact on resource density and through that the population size.
  
 \item[$\frac{C((u_{1}, u_{2}),( v_{1}, v_{2}))}{K} \in\mathbb{R}_+$]:  the competitive effect felt by an individual with genotype $(u_{1}, u_{2})$ from an individual with genotype $(v_{1}, v_{2})$. The function $C$ is customarily referred to as competition kernel.
  
  \item $\mu_{K}\in\mathbb{R}_+$: the
  mutation probability  per birth event (assumed to be independent of the genotype). The idea is that $\mu_K$ is made appropriately small when we let $K$ increase. 
  
  \item $\sigma>0$: a parameter scaling the mutation amplitude.
  
\item $m_{\sigma}(u,h)dh = {1\over \sigma} m(u,{h\over \sigma})dh$:   the mutation law of a mutant allelic trait $\, u+h$ from an individual with allelic trait $u$,  with  $m(u,h)dh$ a probability measure with support   $ [-1,1]\cap\{h \,  | \,   u+h\in{\cal U}\}$. As a result the support of $m_{\sigma}$ is of size $\leq 2\sigma$.

%\item For any $u$ we denote by $m(u,h)dh$ a probability measure with support   $ [-1,1]\cap\{h \,  | \,   u+h\in{\cal U}\}$. We denote by  $m_{\sigma}(u,h)dh = {1\over \sigma} m(u,{h\over \sigma})dh$   the mutation law of a mutant allelic trait $\, u+h$ from an individual with allelic trait $u$. As a result the support of $m_{\sigma}$ is of size $2\sigma$.
\end{description}

% Let us also introduce the following notations, used throughout this
% paper:
% \begin{gather}
%   \label{eq:def-bar-n}
%   \bar{n}_x=\frac{b(x)-d(x)}{C(x,x)}, \\
%   \beta(x)=\mu(x)b(x)\bar{n}_x \label{eq:def-beta} \\
%   \mbox{and}\quad f(y,x)=b(y)-d(y)-C(y,x)\bar{n}_x. \label{eq:def-fitn}
% \end{gather}
% As will appear below, $\bar{n}_x$ can be interpreted as the
% equilibrium density of a monomorphic population when there is no
% mutation, $\beta(x)$ as the mutation rate in this population, and
% $f(y,x)$ as the fitness of a mutant individual with trait $y$ in this
% population.

\smallskip
 \noindent {\bf Notational convention}:
When only two alleles $A$ and $a$ co-circulate, we will  use the shorthand:
\ben &f_{AA}=f(u_{A},u_{A})\ ;\ f_{Aa}=f(u_{A},u_{a})\ ;\
f_{aa}=f(u_{a},u_{a})\ ;\ D_{AA} = D(u_{A},u_{A})\ ;\\  
& C((u_{A},u_{a}),(u_{A},u_{A})) = C_{Aa,AA};\;\; \mathrm{etc.}\een

\bi
To keep things simple we take our model organisms to be hermaphrodites which in their female role give birth at rate $f$ and in their male role have probabilities proportional to $f$ to act as the father for such a birth.

\noindent We consider, at any time $t\geq 0$, a finite number
$N_t$ of individuals, each of them with genotype in ${\cal
U}^2$. Let us denote by $(u^1_{1},u^1_{2}),\ldots,(u_{1}^{N_t},u_{2}^{N_t})$ the genotypes of these individuals. The state of the population at time $t\geq 0$,
rescaled by $K$, is described by the finite point measure on
${\cal U}^2$
\begin{equation}
  \label{eq:nu_t}
  \nu^{\sigma,K}_t={1\over K}\sum_{{i=1}}^{N_{t}} \delta_{(u^i_{1}, u^i_{2})}, 
\end{equation}
where $\delta_{(u_{1},u_{2})}$ is the Dirac measure at $(u_{1},u_{2})$. 

\noindent Let $ \langle\nu,g\rangle$ denote the integral of the measurable
function $g$ with respect to the measure $\nu$ and $\mathrm{Supp}(\nu)$ the support of the latter.
Then $\: \langle\nu^{\sigma,K}_t,{\bf 1}\rangle=\frac{N_t}{K}$ and
for any $(u_{1},u_{2})\in {\cal U}^2$, the positive number $\langle\nu^{\sigma,K}_t,{\bf
1}_{\{(u_{1},u_{2})\}}\rangle$ is called the {\bf density} at time $t$ of
genotype $(u_{1},u_{2})$.

\noindent  Let ${\cal M}_F$ denote the set of finite nonnegative
measures on ${\cal U}^2$, equipped with the weak topology, and
define
\begin{equation*}
%   \label{eq:def_MK}
  {\cal M}^K=\left\{\frac{1}{K}\sum_{i=1}^n\delta_{(u_1^i,u_2^i) }:n\geq 0,\
 (u^1_{1},u^1_{2}) ,\ldots, (u^n_{1},u^n_{2})\in{\cal U}^2\right\}.
\end{equation*}

\noindent An individual with genotype $(u_{1},u_{2})$ in the population
$\nu^{\sigma,K}_t$ reproduces with an individual  with genotype $(u^j_{1},u^j_{2})$ at a rate $f(u_{1},u_{2}) {f(u^j_{1},u^j_{2})\over K\langle \nu^{\sigma,K}, f\rangle}$.

\noindent  With probability $1- \mu_K(u_{1},u_{2})$  reproduction follows the Mendelian rules, with a
newborn getting a genotype with coordinates that are sampled at random from each parent.

At reproduction mutations occur with probability  $\mu_K(u_{1},u_{2})$ and then change one of the two allelic traits of the newborn from $u$ to $u+h$ with $h$ drawn from $m_\sigma(u,h)dh$.

Each individual  dies at rate
\begin{eqnarray*}
  D(u_{1},u_{2})+ C*\nu^{\sigma,K}_t(u_{1},u_{2}) = D(u_{1},u_{2}) +
  \frac{1}{K}\sum_{j=1}^{N_t}C((u_{1},u_{2}); (u^j_{1},u^j_{2})).
\end{eqnarray*}
The competitive effect of individual $j$ on an  individual $i$ is described by an increase of ${C((u_{1}^i,u_{2}^i); (u^j_{1},u^j_{2}))\over K}$ of the latter's death rate. The parameter $K$ scales the strength of competition: the larger $K$, the less individuals interact. This decreased interaction goes hand in hand with a larger population size, in such a way that densities stay well-behaved. Appendix \ref{compkern} summarizes the long tradition of and supposed rationale for the representation  of competitive interactions by competition kernels.

 %\noindent  A classical example of such an interaction is the logistic one, where  the additional death rate incurred by an individual with genotype $(u_{1}, u_{2})$ is ${C((u_{1}, u_{2});(v_{1}, v_{2}))\over K}$ is independent of the genotype of its competitor $(v_{1}, v_{2})$, $C((u_1, u_2);(v_1, v_2))=\overset{\lower0.5em\hbox{$\smash{\scriptscriptstyle\smile}$}}{C} (u_1, u_2)$.  In that case, 
%  $$ \frac{1}{K}\sum_{j=1}^{N_t}C((u_{1},u_{2}); (u^j_{1},u^j_{2})) =\overset{\lower0.5em\hbox{$\smash{\scriptscriptstyle\smile}$}}{C} (u_{1},u_{2}) \langle \nu^{\sigma,K}_{t},1\rangle.$$

%thus
%allowing the coexistence of more individuals in the population.\\

 For measurable functions $F: \mathbb{R}\rightarrow \mathbb{R}$ and $g: {\cal U}^2\rightarrow \mathbb{R}$,  $g$  symmetric, let us define the function $F_{g}$ on ${\cal M}^K$ by $F_{g}(\nu) = F(\langle \nu, g\rangle)$.  

\noindent
For a genotype $(u_{1},u_{2})$ and a point measure $\nu$,  we define the Mendelian reproduction operator
\be 
\label{op-reproduction}
&&AF_{g}(\nu,u^i_{1},u^i_{2},u^j_{1},u^j_{2}) = {1\over 4}\bigg\{F\Big(  \langle \nu, g\rangle+{1\over K} g(u^i_{1},u^j_{1})\Big)+ F\Big(  \langle \nu, g\rangle+{1\over K} g(u^i_{1},u^j_{2})\Big) \nonumber\\
&&\hskip 2cm +F\Big(  \langle \nu, g\rangle+{1\over K} g(u^i_{2},u^j_{1})\Big) +F\Big(  \langle \nu, g\rangle+{1\over K} g(u^i_{2},u^j_{2})\Big)\bigg\}  - F_{g}(\nu),
 \ee
and for $m(u,h)dh$ a measure on $\mathbb{R}$ parametrized by $u$, we define the Mendelian reproduction-cum-mutation operator
\be
\label{op-mutation}
&MF_{g}(\nu, u^i_{1},u^i_{2},u^j_{1},u^j_{2})\\
&={1\over 8}\int \Big\{\Big(F\Big(  \langle \nu, g\rangle+{1\over K} g(u^i_{1}+h,u^j_{1})\Big)+F\Big(  \langle \nu, g\rangle+{1\over K} g(u^i_{1}+h,u^j_{2})\Big)\Big)\, m_{\sigma}(u^i_{1},h)  \nonumber\\
&+\Big(F\Big(  \langle \nu, g\rangle+{1\over K} g(u^i_{2}+h,u^j_{1})\Big) +F\Big(  \langle \nu, g\rangle+{1\over K} g(u^i_{2}+h,u^j_{2})\Big)\Big)m_{\sigma}(u^i_{2},h) \nonumber \\
&+\Big(F\Big(  \langle \nu, g\rangle+{1\over K} g(u^i_{1},u^j_{1}+h)\Big) +F\Big(  \langle \nu, g\rangle+{1\over K} g(u^i_{2},u^j_{1}+h)\Big)\Big)m_{\sigma}(u^i_{2},h)  \nonumber \\
&+\Big(F\Big(  \langle \nu, g\rangle+{1\over K} g(u^i_{1},u^j_{2}+h)\Big) +F\Big(  \langle \nu, g\rangle+{1\over K} g(u^i_{2},u^j_{2}+h)\Big)\Big)m_{\sigma}(u^i_{2},h) \Big\}
\, dh
 - F_{g}(\nu).\nonumber\\
 \
\ee

%\, m(u^i_{2},h) +AF_{g}(\nu,u^i_{1},u^i_{2},u^j_{1}+h,u^j_{2})\, m(u^j_{1},h)\nonumber\\
%&&  + AF_{g}(\nu,u^i_{1},u^i_{2},u^j_{1},u^j_{2}+k)\, m(u^j_{2},h) \Big\} 

\noindent  The process $(\nu^{\sigma,K}_t,t\geq 0)$ is a ${\cal M}^K$-valued Markov process with infinitesimal generator defined for
any bounded measurable functions $F_{g}$ from ${\cal M}^K$ to
$\mathbb{R}$

 \noindent
  and $\nu =\frac{1}{K}\sum_{i=1}^n \delta_{(u_1^i,u_2^i)} $  by

\be
  L^KF_{g}(\nu) & =& \sum_{i=1}^n \Big( D(u^i_{1},u^i_{2})+ C*\nu^{\sigma,K}_t(u^i_{1},u^i_{2})\Big) \left(F\Big(  \langle \nu, g\rangle - {1\over K} g(u^i_{1},u^i_{2})\Big) - F_{g}(\nu)\right)\nonumber\\
&&+  \sum_{i=1}^n (1 -  \mu_{K}(u_1^i,u_2^i))\sum_{j=1, j\neq i}^n  f(u^i_{1},u^i_{2}) {f(u^j_{1},u^j_{2})\over K \, \langle \nu, f\rangle} \,AF_{g}(\nu,u^i_{1},u^i_{2},u^j_{1},u^j_{2})\nonumber\\
&+&  \sum_{i=1}^n \mu_{K}(u_1^i,u_2^i)\sum_{j=1, j\neq i}^n  f(u^i_{1},u^i_{2}) {f(u^j_{1},u^j_{2})\over K \, \langle \nu, f\rangle}\, MF_{g}(\nu, u^i_{1},u^i_{2},u^j_{1},u^j_{2})
.
   \label{eq:generator-renormalized-IPS}
\ee

\noindent  The first term  describes  the deaths,  the second term describes the births without mutation and the third term describes  the
 births with mutations. (We neglect the occurrence of multiple mutations in one zygote, as those unpleasantly looking terms will become negligible anyway when $\mu_K$ goes to zero.)  The density-dependent non-linearity of the death
term models the competition between individuals and makes selection frequency dependent.  \medskip

\noindent Let us denote by~(A) the following three assumptions
\begin{description}
\item[\textmd{(A1)}]  The functions $f$, $D$, $\mu_{K}$ and $C$ are smooth functions and thus bounded since ${\cal U}$ is compact.Therefore there exist $\bar{f},\bar{D}, \bar{C}<+\infty$ such that
  \begin{equation*}
   0\leq f(\cdot)\leq\bar{f},\quad 0\leq D(\cdot)\leq\bar{D},     \quad 0\leq C(\cdot,\cdot)\leq\bar{C}.
  \end{equation*}
  \item[\textmd{(A2)}] $r(u_{1},u_{2})=f(u_{1},u_{2})-D(u_{1},u_{2})>0$ for any $(u_{1},u_{2})\in {\cal U}^2$, and there exists
  $\underline{C}>0$ such that
$ \ \underline{C}\leq C(\cdot,\cdot)$.

\item[\textmd{(A3)}]  For any $\sigma>0$, there exists a function
$\bar{m}_{\sigma}:\mathbb{R}\rightarrow\mathbb{R}_+$,  $\int \bar{m}_{\sigma}(h)dh<\infty$,  such
  that $m_{\sigma}(u,h)\leq \bar{m}_{\sigma}(h)$ for any $u\in{\cal U}$ and
  $h\in\mathbb{R}$.

\end{description}
\noindent For fixed $K$, under~(A1) and~(A3) and assuming that
$\mathbb{E}(\langle\nu^{\sigma,K}_0,\mathbf{1}\rangle)<\infty$, the existence
and uniqueness in law of a process on $\mathbb{D}(\mathbb{R}_+, {\cal
  M}^K)$ with infinitesimal generator $L^K$ can be adapted from the one in Fournier-M\'el\'eard \cite{FM04} or \cite{CFM06}. The process can be constructed as solution of a stochastic differential equation driven by point Poisson measures describing each jump event. 
  Assumption~(A2) prevents the
population from exploding or going  extinct too fast.
% on~(A4).

\section{The short term large population and rare mutations limit: how selection changes allele frequencies}\label{short term}

In this section we study the large population and rare mutations  approximation of the process described above, when $K$ tends to infinity and $\mu_{K}$ tends to zero. The limit becomes deterministic and continuous and the mutation events disappear. 

The proof of the following theorem can be adapted from \cite{FM04}.

\begin{thm} \label{largepop}
When $K$ tends to infinity and if $\nu_{0}^K $ converges in law to a
deterministic measure $\nu_{0}$, then the process $(\nu^{\sigma,K})$
converges in law to the deterministic continuous  measure-valued
function $(\nu_{t}, t\geq 0)$ solving   
\begin{eqnarray*}
&\langle \nu_{t},g\rangle = \langle \nu_{0},g\rangle + \int_{0}^t \bigg\{- \langle \nu_{s}, (D+ C*\nu_{s})g\rangle+ \langle \nu_{s}\otimes \nu_{s}, {f(u_{1},u_{2}) f(v_{1},v_{2})\over 4 \langle \nu_{s}, f\rangle}\big(g(u_{1},v_{1}) + g(u_{1}, v_{2}) \\
&\hskip2cm + g(u_{2}, v_{1})+g(u_{2}, v_{2})\big) \rangle\bigg\}ds . 
\end{eqnarray*}
\end{thm}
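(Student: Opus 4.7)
The plan is to follow the classical programme of Fournier--M\'el\'eard \cite{FM04} and Champagnat--Ferri\`ere--M\'el\'eard \cite{CFM06}, adapted to the Mendelian generator $L^K$ of \eqref{eq:generator-renormalized-IPS}. Four steps are carried out in sequence: a semimartingale decomposition with uniform moment control, tightness of $(\nu^{\sigma,K})_K$ in $\mathbb{D}(\mathbb{R}_+, \mathcal{M}_F)$, identification of every limit point as a solution of the stated integral equation, and uniqueness of that solution.

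For the first step, applying $L^K$ to the linear test function $F_g(\nu) = \langle \nu, g\rangle$ yields the Dynkin decomposition
\begin{equation*}
\langle \nu^{\sigma,K}_t, g\rangle = \langle \nu^{\sigma,K}_0, g\rangle + \int_0^t L^K(\langle \cdot, g\rangle)(\nu^{\sigma,K}_s)\, ds + Z^{\sigma,K}_t(g),
\end{equation*}
where $Z^{\sigma,K}(g)$ is a c\`adl\`ag square-integrable martingale whose predictable bracket, computed from $L^K(F_g^2) - 2 F_g L^K F_g$, is of order $1/K$. Under (A1) the birth rate per individual is at most $\bar f$ and the death rate at most $\bar D + \bar C \langle \nu^{\sigma,K}_s, \mathbf{1}\rangle$, so a standard comparison with a pure birth process of rate $\bar f$ produces, for every $T > 0$ and every $p \geq 1$, the uniform moment estimate
\begin{equation*}
\sup_K \mathbb{E}\Big[\sup_{t \leq T} \langle \nu^{\sigma,K}_t, \mathbf{1}\rangle^p\Big] < \infty,
\end{equation*}
provided the corresponding initial moments are controlled, which follows from $\nu_0^K \to \nu_0$ after a routine truncation.

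Tightness of $(\nu^{\sigma,K})_K$ then follows from the Aldous--Rebolledo criterion applied to the scalar processes $(\langle \nu^{\sigma,K}_t, g\rangle)$ for $g$ continuous on $\mathcal{U}^2$: tightness of one-dimensional marginals comes from the moment bound, while the modulus of continuity is controlled by bounding the finite-variation part (Lipschitz in $\nu$ on the subsets where the total mass stays bounded and $\langle \nu, f\rangle$ is bounded below) and the $O(1/K)$ bracket of $Z^{\sigma,K}(g)$. Any limit point is continuous since jumps have magnitude $1/K \to 0$. Passing to the limit inside $L^K(\langle \cdot, g\rangle)$ is done term by term: the reproduction-with-mutation contribution carries the prefactor $\mu_K \to 0$ and vanishes, the martingale disappears, and the pure Mendelian birth double sum over $i \neq j$ converges by the continuous mapping theorem to $\langle \nu_s \otimes \nu_s, \cdot\rangle / \langle \nu_s, f\rangle$ (the diagonal $i = j$ producing only an $O(1/K)$ correction), yielding exactly the stated integro-differential equation.

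The single delicate point, genuinely new compared with the clonal case, is the denominator $\langle \nu^{\sigma,K}_s, f\rangle$ in the sexual reproduction rate: one must ensure it stays bounded away from zero on $[0,T]$, uniformly in $K$ and on events of probability tending to one. Assumption (A2) is crucial here: $r = f - D > 0$ on the compact set $\mathcal{U}^2$ together with continuity of $f$ yields $f \geq \underline f > 0$, and a stochastic lower coupling of $\langle \nu^{\sigma,K}_s, \mathbf{1}\rangle$ with a linear birth-death process (birth rate at least $\underline f$, death rate at most $\bar D + \bar C M$ on the event where the moment bound keeps the total mass below $M$) prevents collapse on $[0,T]$ as long as $\langle \nu_0, \mathbf{1}\rangle > 0$. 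Uniqueness of solutions to the limit equation is then obtained by a standard Gronwall argument on $t \mapsto \| \nu_t - \tilde \nu_t \|_{\mathrm{TV}}$, using local Lipschitz continuity of the right-hand side on $\{\nu : \langle \nu, f\rangle \geq \underline f \varepsilon, \; \langle \nu, \mathbf{1}\rangle \leq M\}$. This control of the denominator is the main obstacle; everything else is essentially bookkeeping adapted from \cite{FM04,CFM06}.
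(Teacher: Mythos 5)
Your proposal is correct and follows exactly the route the paper intends: the paper gives no detailed proof but simply states that the argument "can be adapted from \cite{FM04}", i.e.\ the Fournier--M\'el\'eard programme of semimartingale decomposition with moment bounds, Aldous--Rebolledo tightness, identification of limit points, and Gronwall uniqueness, which is precisely what you carry out. Your identification of the frequency-dependent denominator $\langle \nu^{\sigma,K}_s, f\rangle$ as the one genuinely new point requiring a lower bound, handled via (A2) and a coupling argument, is exactly the adaptation the authors leave implicit.
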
 

\noindent Below we have a closer look at  the specific cases of  genetically mono- and dimorphic initial conditions.

\subsection{Monomorphic populations}

Let us first study the dynamics of a fully homozygote population with genotype $(u_{A},u_{A})$ corresponding to a unique allele $A$ and genotype $AA$. Assume that the initial condition is $N^K_{0}\delta_{(u_{A},u_{A})}$, with ${N^K_{0}\over K}$ converging to a deterministic number $n_{0}>0$ when $K$ goes to infinity. 
 
 \noindent In that case the population process is $N^K_{t }\delta_{(u_{A},u_{A})}$ where $N^K_{t }$ is a logistic birth and death process 
 with birth rate $f_{AA}=f(u_{A},u_{A})$ and death rate 
 $D_{AA} + {C_{AA,AA}\over K}\, N^K_{t}$. The process  $({N^K_{t}\over K}, t\geq 0)$ converges in law when $K$ tends to infinity to the solution $(n(t), t\geq 0)$  of the logistic equation 
 \be\label{logistic} {d n\over dt}(t) = n({t})\,(f_{AA} - D_{AA} - C_{AA,AA}\, n({t})),\ee with initial condition $n(0)=n_{0}$.
 This equation  has a unique stable equilibrium equal to the carrying capacity:
 \be
 \label{capacity}\bar n_{AA}= {f_{AA}- D_{AA}\over C_{AA,AA}}.\ee
 
 \subsection{Genetic dimorphisms}
 
Let us now assume that there are two alleles $A$ and $a$  in the population (and no mutation). Then the initial population has the three genotypes  $AA$, $Aa$ and $aa$.  We use $(N^K_{AA,t}, N^K_{Aa,t}, N^K_{aa,t})$ to denote the respective numbers of individuals with genotype $AA$, $Aa$ and $aa$ at time $t$, and 
$(N_{AA},N_{Aa},N_{aa})$ 
to indicate the typical state of the population.  Let 
$$
p=\frac{f_{AA}N_{AA}+f_{Aa}N_{Aa}/2}{f_{AA}N_{AA}+f_{Aa}N_{Aa}+f_{aa}N_{aa}}
$$
be  the  relative frequency of $A$ in the gametes. Then the population dynamics $t\mapsto (N^K_{AA,t}, N^K_{Aa,t}, N^K_{aa,t})$ is a birth and death process with three types and birth rates $b_{AA}, b_{Aa}, b_{aa}$ and death rates $d_{AA}, d_{Aa}, d_{aa}$ defined as follows.
\be
\label{naissance}
\begin{array}{rl}
b_{AA}=&(f_{AA}N_{AA}+{1\over2}f_{Aa}N_{Aa})\,p \\
 =& \frac{(f_{AA}N_{AA}+{1\over2}f_{Aa}N_{Aa})^2}{f_{AA}N_{AA}+f_{Aa}Y+f_{aa}N_{aa}}
,\\
b_{Aa}=&(f_{AA}N_{AA}+{1\over2}f_{Aa}N_{Aa})\,(1-p) +(f_{aa}N_{aa}+{1\over2}f_{Aa}N_{Aa})\,p \\
=& 2 {(f_{AA}N_{AA}+{1\over2}f_{Aa}N_{Aa})(f_{aa}N_{aa}+{1\over2}f_{Aa}N_{Aa})\over f_{AA}N_{AA}+f_{Aa}N_{Aa}+f_{aa}N_{aa}},\\
b_{aa}=&(f_{aa}N_{aa}+{1\over2}f_{Aa}N_{Aa})\,(1-p) \, \\
=& {(f_{aa}N_{aa}+{1\over2}f_{Aa}N_{Aa})^2\over f_{AA}N_{AA}+f_{Aa}N_{Aa}+f_{aa}N_{aa}}.\\
\end{array}
\ee
\be
\label{mort}
\begin{array}{rl}
d_{AA}=&\left(D_{AA}+{C_{AA,AA}\,N_{AA} + \,C_{AA,Aa} \,N_{Aa}\,+C_{AA,aa}\,N_{aa}\over K}
\right) N_{AA},\\
d_{Aa}=&\left(D_{Aa}\,\,+\, {C_{Aa,AA}\,N_{AA} + \,C_{Aa,Aa}\,N_{Aa}\,+C_{Aa,aa}\,N_{aa}\over K}\right) N_{Aa},\\
d_{aa}=&\left(D_{aa}\,\,\,\,+\, \,{C_{aa,AA}\,N_{AA} + \,C_{aa,Aa}\,N_{Aa}\,+C_{aa,aa}\,N_{aa}\over K}\right) N_{aa}.\\
\end{array}
\ee
To see this, it suffices to consider the generator \eqref{eq:generator-renormalized-IPS}
with $\mu_{K}=0$; for instance,  $K\, \langle \nu, f\rangle = f_{AA}N_{AA}+f_{Aa}N_{Aa}+f_{aa}N_{aa}$. 

\begin{prop} \label{largepoptri} Assume that the initial condition  $K^{-1}(N^K_{AA,0}, N^K_{Aa,0}, N^K_{aa,0})$ converges to a deterministic vector $(x_{0}, y_{0},z_{0})$ when $K$ goes to infinity. 
Then the normalized process  $K^{-1}(N^K_{AA,t}, N^K_{Aa,t}, N^K_{aa,t})$ converges in law when $K$ tends to infinity to the solution $(x(t),y(t),z(t))=\varphi_{t}(x_{0}, y_{0},z_{0})$ of
\be 
\label{sysdyn}
{d \over dt}\left(\begin{array}{c}
x(t)\\
y(t)\\
z(t)\\ 
\end{array}\right)=  X\big(x(t),y(t),z(t)\big)\;,
\ee
where 
\be
\label{leX}
 X(x,y,z)=\left(\begin{array}{c}
\tilde b_{AA}(x,y,z) - \tilde d_{AA}(x,y,z)\\
\tilde b_{Aa}(x,y,z) - \tilde d_{Aa}(x,y,z)\\
\tilde b_{aa}(x,y,z) - \tilde d_{aa}(x,y,z)\\
\end{array}\right)\;,
\ee
with
$$
\tilde b_{AA}(x,y,z)= \frac{(f_{AA}x+{1\over2}f_{Aa})(f_{AA}x+{1\over2}f_{Aa}y)}{
f_{AA}x+f_{Aa}y+f_{aa}z}\;,
$$
$$
\tilde d_{AA}(x,y,z)=
(D_{AA}+C_{AA,AA}\,x + \,C_{AA,Aa} \,y\,+C_{AA,aa}
\,z)\, x\;,
$$
and similar expressions for the other terms.
\end{prop}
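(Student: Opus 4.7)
The proof specializes Theorem~\ref{largepop} to the mutation-free setting $\mu_K=0$. Since no new alleles can be introduced by reproduction, the support of $\nu^{\sigma,K}_t$ almost surely remains, for all $t\geq 0$, inside the set of ordered genotypes built from $\{u_A,u_a\}$; by the assumed symmetry of all coefficient functions in their two allelic arguments we identify $(u_A,u_a)$ with $(u_a,u_A)$ and write
$$
\nu^{\sigma,K}_t=\frac{1}{K}\Bigl(N^K_{AA,t}\,\delta_{(u_A,u_A)}+N^K_{Aa,t}\,\delta_{(u_A,u_a)}+N^K_{aa,t}\,\delta_{(u_a,u_a)}\Bigr).
$$
Each component $K^{-1}N^K_{\cdot,t}$ is recovered continuously from $\nu^{\sigma,K}_t$ by integration against a continuous test function on $\mathcal{U}^2$ equal to $1$ at the relevant point of the support and $0$ at the two others. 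The assumed initial convergence thus translates into convergence in law of $\nu^{\sigma,K}_0$ to $x_0\delta_{(u_A,u_A)}+y_0\delta_{(u_A,u_a)}+z_0\delta_{(u_a,u_a)}$, and Theorem~\ref{largepop} supplies convergence in law of $(\nu^{\sigma,K}_t)$ in $\mathbb{D}(\mathbb{R}_+,\mathcal{M}_F)$ to a deterministic limit $(\nu_t)$ whose support remains in the three-point set.

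Writing the limit as $\nu_t=x(t)\delta_{(u_A,u_A)}+y(t)\delta_{(u_A,u_a)}+z(t)\delta_{(u_a,u_a)}$ and evaluating the integral equation of Theorem~\ref{largepop} on the three test functions yields three scalar integral equations which, after differentiation, we claim to coincide with~\eqref{sysdyn}. The death term $-\langle\nu_s,(D+C*\nu_s)g\rangle$ produces directly the three $-\tilde d$ expressions in~\eqref{leX}, reproducing the rates~\eqref{mort}. For the reproduction term, each of the four Mendelian offspring $(u_i,v_j)$ obtained from parents in $\{(u_A,u_A),(u_A,u_a),(u_a,u_a)\}$ lies again in that set, so the double integral against $\nu_s\otimes\nu_s$ reduces to a sum over the nine ordered parental pairs; using $\langle\nu_s,f\rangle=f_{AA}x+f_{Aa}y+f_{aa}z$ and grouping contributions by offspring genotype reproduces, via the same elementary algebra already carried out in~\eqref{naissance}, the rates $\tilde b_{AA}$, $\tilde b_{Aa}$, $\tilde b_{aa}$.

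To identify the limit it remains to check uniqueness for~\eqref{sysdyn}. The vector field $X$ is smooth away from the origin; its only potential singularity lies in the denominator $f_{AA}x+f_{Aa}y+f_{aa}z$ of the birth terms. Comparing the total mass $n(t)=x(t)+y(t)+z(t)$ with the logistic equation~\eqref{logistic} through the bounds supplied by~(A1) and~(A2) shows that $n$ stays uniformly bounded on $\mathbb{R}_+$ and, whenever $n(0)>0$, remains bounded away from zero on every compact time interval. Hence $X$ is locally Lipschitz on the region the trajectory visits, and the Cauchy--Lipschitz theorem yields a unique global $C^1$ solution $\varphi_t(x_0,y_0,z_0)$, which we identify as the limit. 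The only (mild) technical obstacle is this control of the birth-term denominator near the extinct state, and it is supplied precisely by the coercivity bound $\underline C\leq C$ in~(A2).
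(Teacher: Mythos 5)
Your proof is correct and follows essentially the route the paper intends: the paper obtains the tri-type rates by evaluating the generator \eqref{eq:generator-renormalized-IPS} with $\mu_K=0$ and then relies on the standard large-population compactness--uniqueness argument (Theorem \ref{largepop}, adapted from Fournier--M\'el\'eard), which is exactly what you do by specializing to a three-atom measure and verifying that $X$ is locally Lipschitz where the trajectory lives. One small attributional slip: keeping the denominator $f_{AA}x+f_{Aa}y+f_{aa}z$ away from zero on compact time intervals follows from the upper bounds $\bar D,\bar C$ of (A1) (the per-capita death rate is at most $\bar D+\bar C\,n$, so $n$ decays at worst at a locally bounded exponential rate), whereas the coercivity bound $\underline C\leq C$ of (A2) is what prevents explosion, i.e.\ controls the other end.
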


Due to its special functional form, the vector field $ X$ has
some particular properties. We summarize some of  them in the
following Propositions.

\begin{prop}\label{stabfix}
The vector field \eqref{leX} has two fixed points 
$(\bar n_{AA},0,0)$ and $(0,0,\bar n_{aa})$ (denoted below by $AA$ and $aa$) where
$$
\bar n_{AA}=\frac{f_{AA}-D_{AA}}{C_{AA,AA}}\;,\qquad\mathrm{and}\qquad
\bar n_{aa}=\frac{f_{aa}-D_{aa}}{C_{aa,aa}}\;.
$$
The ($3\times 3$) Jacobian matrix $DX(AA)$ has the eigenvalues 
$-f_{AA}+D_{AA}$ (negative by assumption $(A2)$), 
$-C_{aa,AA}\;n_{AA}-D_{aa}<0$, and 
$$
S_{Aa,AA}=f_{Aa}-D_{Aa}-C_{Aa,AA}\;\bar n_{AA}.
$$
An analogous result holds for $DX(aa)$.

\end{prop}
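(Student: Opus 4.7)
My plan is to verify the two fixed points by direct substitution and then exploit a structural invariance to reduce the eigenvalue computation at each fixed point to reading off the diagonal of an upper triangular $3 \times 3$ matrix.

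The key observation is that the line $L_A := \{y = z = 0\}$ is invariant under \eqref{sysdyn}. Indeed, along $L_A$ each of $\tilde b_{Aa}, \tilde d_{Aa}, \tilde b_{aa}, \tilde d_{aa}$ vanishes, since the numerators of $\tilde b_{Aa}$ and $\tilde b_{aa}$ contain the factor $f_{aa}z + \tfrac{1}{2}f_{Aa}y$, while $\tilde d_{Aa}$ and $\tilde d_{aa}$ are proportional to $y$ and $z$ respectively. The restricted dynamics on $L_A$ is thus the scalar logistic equation \eqref{logistic} for $x$, whose unique positive equilibrium is $\bar n_{AA}$. This yields the fixed point $(\bar n_{AA}, 0, 0)$; the symmetric argument on $\{x = y = 0\}$ gives $(0, 0, \bar n_{aa})$.

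For the Jacobian at $AA = (\bar n_{AA}, 0, 0)$, the invariance of $L_A$ forces $X_2 \equiv X_3 \equiv 0$ there, so $\partial_x X_2(AA) = \partial_x X_3(AA) = 0$. A further second-order vanishing argument gives $\partial_y X_3(AA) = 0$: $\tilde d_{aa}$ has $z$ as a factor, and $\tilde b_{aa}$ has $(f_{aa}z + \tfrac{1}{2}f_{Aa}y)^2$ as a factor, so both vanish to second order at $y = z = 0$. Consequently $DX(AA)$ is upper triangular and its eigenvalues are its diagonal entries. Differentiating the logistic equation on $L_A$ at the carrying capacity gives $\partial_x X_1(AA) = f_{AA} - D_{AA} - 2 C_{AA,AA} \bar n_{AA} = -(f_{AA} - D_{AA})$. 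A short quotient-rule calculation, using that the denominator $f_{AA}x + f_{Aa}y + f_{aa}z$ equals $f_{AA}\bar n_{AA}$ at $AA$, yields $\partial_y X_2(AA) = f_{Aa} - D_{Aa} - C_{Aa,AA}\bar n_{AA} = S_{Aa,AA}$, while $\partial_z X_3(AA) = -\partial_z \tilde d_{aa}(AA) = -(D_{aa} + C_{aa,AA}\bar n_{AA})$. The statement for $DX(aa)$ follows from the manifest $A \leftrightarrow a$ symmetry. There is no real technical obstacle; the only conceptual point is recognizing the upper-triangular structure of $DX(AA)$, which reflects the biological fact that in a pure $AA$ resident population a novel $a$-allele can only enter via heterozygotes, so $S_{Aa,AA}$ is the single invasion-relevant eigenvalue.
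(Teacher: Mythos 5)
Your proposal is correct and matches the paper's intent: the paper states only that Proposition \ref{stabfix} ``follows from a direct computation left to the reader,'' and your argument is exactly that computation, organized efficiently around the invariance of the coordinate line $\{y=z=0\}$ and the resulting upper-triangular structure of $DX(AA)$ (the sub-diagonal entries $\partial_x X_2$, $\partial_x X_3$, $\partial_y X_3$ all vanish because $\tilde d_{Aa}\propto y$, $\tilde d_{aa}\propto z$, and the numerators of $\tilde b_{Aa}$, $\tilde b_{aa}$ contain the factor $f_{aa}z+\tfrac12 f_{Aa}y$ to first, respectively second, order). All three diagonal entries check out against the stated eigenvalues, so nothing further is needed.
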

This result follows from a direct computation left to the reader.

\bi
As we will see later on, the eigenvalue   $
S_{Aa,AA}$ will play a key role in the dynamics of trait substitutions. It describes the initial growth rate of the number of $Aa$ individuals in a resident population of $AA$ individuals and is  called the invasion  fitness of an $Aa$ mutant  in an $AA$ resident population. It  is a function of the allelic traits $u_{A}$ and $u_{a}$.

\bi
{\bf Notation}: When we wish to emphasize the dependence on the  two allelic traits $(u_{A},u_{a})$, we use the notation
\begin{equation}\label{fitness}
S_{Aa,AA} = S(u_{a};u_{A})= f(u_{A},u_{a}) -D(u_{A},u_{a})-C((u_{A},u_{a}),(u_{A},u_{A})) \frac{f(u_{A},u_{A}) -D(u_{A},u_{A})}{C((u_{A},u_{A}),(u_{A},u_{A}))}.
\end{equation}
Note that the function $S$ is not symmetric in $u_{A}$ and $u_{a}$ and that moreover 
\be
\label{fitness0}
S(u_{A}; u_{A}) = 0.
\ee

In Appendices \ref{casgen} and \ref{casduX}  the long term behavior of the flow generated by the vector field (\ref{leX}) is analyzed in more detail. The main conclusions are: 

\begin{prop}
First consider the case when the mutant and resident traits are precisely equal. Then the total population density goes to a unique equilibrium and the relative frequencies of the genotypes go to the Hardy-Weinberg proportions $(p^2,p(1-p,(1-p)^2)$, i.e., there exists  a globally attracting one-dimensional manifold filled with neutrally stable equilibria parametrized by $p$, with as stable manifolds the populations with the same $p$.

\noindent For the mutant and resident sufficiently close, this attracting manifold transforms into an invariant manifold connecting the pure resident and pure mutant equilibria. When  $S_{Aa,AA}>0$  the pure resident equilibrium attracts only in the line without any mutant alleles and its local unstable manifold is contained in the aforementioned invariant manifold (Theorem {\rm\ref{wsloc}}). When moreover the traits are sufficiently far from an evolutionarily singular point (defined by $\partial_1S(u_A;u_A)=0$)  the movement on the invariant manifold is from the pure resident to the pure mutant equilibrium, and any movement starting close enough to the invariant manifold will end up in the pure mutant equilibrium (Theorem {\rm \ref{tube}}).
\end{prop}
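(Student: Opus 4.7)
The statement contains two distinct regimes, and I would treat the neutral case first, then deduce the perturbed case by normally hyperbolic invariant manifold (Fenichel) theory, exactly as is standard in geometric singular perturbation arguments.

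\emph{Step 1: the neutral case $u_a=u_A$.} When both alleles code for the same phenotype, every genotype shares one common fertility $f$, mortality $D$ and competition coefficient $C$. Writing $n=x+y+z$ and the $A$-gamete frequency $p=(x+y/2)/n$, a direct computation on \eqref{leX} shows that
\[
\dot n = n\bigl(f-D-Cn\bigr),\qquad \dot p = 0,
\]
so $n(t)\to \bar n=(f-D)/C$ globally on $\{n>0\}$ while $p$ is a first integral. A further computation shows that the Hardy--Weinberg defect $\Delta:=xz-y^{2}/4$ satisfies $\dot\Delta=-f\Delta$ on the invariant set where $p$ is constant, so $(x,y,z)$ approaches the HW curve $\Gamma_p=\{(p^{2}\bar n,2p(1-p)\bar n,(1-p)^{2}\bar n)\}$ and the union $\mathcal{M}_0=\bigcup_{p\in[0,1]}\Gamma_p$ is a one-dimensional, globally attracting manifold of equilibria, parametrized by $p$, as claimed.

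\emph{Step 2: normal hyperbolicity of $\mathcal{M}_0$.} At a point of $\mathcal{M}_0$ with $0<p<1$ one computes $DX$ and diagonalizes: the tangent direction to $\mathcal{M}_0$ yields the eigenvalue $0$, while the two transverse eigenvalues are $-(f-D)=-C\bar n<0$ (loss of excess total density) and $-f<0$ (HW relaxation). Both are bounded away from $0$ uniformly on any compact subset of the interior of $\mathcal{M}_0$. At the endpoints $AA$ and $aa$ of $\mathcal{M}_0$ the analysis of Proposition \ref{stabfix} already identifies the eigenvalues; at $u_a=u_A$ the eigenvalue $S_{Aa,AA}$ vanishes (by \eqref{fitness0}) and plays the role of the tangent eigenvalue, while the other two are the same strictly negative numbers as above. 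Hence $\mathcal{M}_0$ is a compact, normally hyperbolic, invariant manifold with boundary for the neutral vector field $X_0$.

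\emph{Step 3: persistence for nearby traits.} Let $\zeta=u_a-u_A$ and write $X=X_\zeta$. Since all coefficient functions are smooth in the traits, $X_\zeta$ is a $C^\infty$ perturbation of $X_0$. By Fenichel's theorem for normally hyperbolic invariant manifolds with boundary (extended to the case where the boundary fixed points persist as hyperbolic fixed points of the full flow, which is the case here because $(\bar n_{AA},0,0)$ and $(0,0,\bar n_{aa})$ remain equilibria of $X_\zeta$ with eigenvalues varying continuously in $\zeta$), for $|\zeta|$ small there exists a $C^{k}$-smooth invariant manifold $\mathcal{M}_\zeta$, $C^{k}$-close to $\mathcal{M}_0$, with $C^{k}$-close local stable manifold $W^{s}(\mathcal{M}_\zeta)$ of codimension one, and trajectories starting in $W^{s}(\mathcal{M}_\zeta)$ are attracted to $\mathcal{M}_\zeta$ at an exponential rate uniformly bounded by (any number below) $\min(f,C\bar n)$. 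Because $AA$ and $aa$ are equilibria of $X_\zeta$, they lie on $\mathcal{M}_\zeta$, so $\mathcal{M}_\zeta$ is a one-dimensional invariant arc connecting them; this gives Theorem \ref{wsloc}. The fact that the unstable manifold of $AA$ (when $S_{Aa,AA}>0$) is contained in $\mathcal{M}_\zeta$ follows because $\mathcal{M}_\zeta$ is invariant, contains $AA$, and the unstable eigendirection at $AA$ is $\zeta$-close to the tangent of $\mathcal{M}_0$ at $AA$, which by normal hyperbolicity is the only direction not in $W^{s}$.

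\emph{Step 4: direction of slow flow and conclusion for Theorem \ref{tube}.} On $\mathcal{M}_\zeta$ the reduced one-dimensional vector field has zeros exactly at $AA$ and $aa$, with derivatives at these endpoints equal to $S_{Aa,AA}$ and $S_{Aa,aa}$ respectively (up to $O(\zeta)$ corrections, by smooth dependence of eigenvalues). Away from evolutionarily singular points, $\partial_1 S(u_A;u_A)\neq 0$ and Taylor-expanding $S(u_a;u_A)$ gives $S_{Aa,AA}$ and $-S_{aA,aa}=S_{Aa,aa}$ of the same sign, so the reduced flow is monotone on $\mathcal{M}_\zeta$ and goes from $AA$ to $aa$ precisely when $S_{Aa,AA}>0$. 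Combining monotonicity on $\mathcal{M}_\zeta$ with exponential attraction to $\mathcal{M}_\zeta$ in $W^{s}(\mathcal{M}_\zeta)$ yields convergence to the pure mutant equilibrium for all initial conditions in a tubular neighborhood of $\mathcal{M}_\zeta$, as claimed.

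\emph{Main difficulty.} The delicate point is Step 3 at the boundary of $\mathcal{M}_0$: the classical Fenichel theorem concerns manifolds without boundary, and here the tangent eigenvalue degenerates smoothly into the hyperbolic eigenvalues $S_{Aa,AA}$ and $S_{Aa,aa}$ of the boundary equilibria. I would handle this by extending $X_\zeta$ smoothly across the boundary hyperplanes $\{x=0\}$ and $\{z=0\}$ into an overflowing invariant manifold without boundary, applying Fenichel there, and then restricting back and using uniqueness of the local unstable manifold at each boundary equilibrium to identify it with $\mathcal{M}_\zeta$. The algebra verifying that the slow eigendirection varies smoothly through $\zeta=0$ at the endpoints, which is what makes the whole perturbation argument work, is ultimately where the $\phi(u_1,u_2)=\phi(u_2,u_1)$ symmetry and the resulting relation \eqref{fitness0} enter crucially.
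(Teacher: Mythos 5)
Your overall architecture coincides with the paper's: the neutral case is handled in coordinates (total density, allele frequency, heterozygosity excess), yielding a line of Hardy--Weinberg equilibria with transverse eigenvalues $D-f$ and $-f$ (Theorem \ref{neutre}), and persistence of this normally hyperbolic curve for small $\zeta=u_a-u_A$ is obtained from invariant manifold theory (the paper invokes Theorem 4.1 of \cite{HPS}, which is essentially your Fenichel argument, including the same caveat that one must first perform surgery to compactify the curve before applying the theorem). Your Steps 1--3 are therefore sound in substance and match the paper's route.

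The genuine gap is in Step 4, where you assert that on $\mathcal{M}_\zeta$ the reduced one-dimensional vector field has zeros exactly at $AA$ and $aa$. This is precisely the hard point of the whole proposition, and knowing the linearizations $S_{Aa,AA}$ and $S_{Aa,aa}$ at the two endpoints cannot settle it: since the unperturbed field vanishes identically on $\Gamma_0$, the tangential component of $X(\zeta,\cdot)$ along the perturbed curve is itself $O(\zeta)$, and nothing in your argument excludes an even number of additional interior equilibria, which would be perfectly compatible with the endpoint stabilities yet would destroy the claimed fixation. The paper devotes Proposition \ref{courbezeros}, the Malgrange preparation factorization \eqref{malgrange} of the tangential component as $\zeta g(v)+\zeta^2 h(\zeta,v)$, and Theorem \ref{leve} to closing exactly this hole, and then must explicitly compute $g(v)=-\frac{1}{2n_{AA}}\frac{dS_{Aa,AA}}{d\zeta}(0)(v^2-n_{AA}^2)$ in Section \ref{mandel} to see that $g$ vanishes only at $v=\pm n_{AA}$ when $\frac{dS_{Aa,AA}}{d\zeta}(0)\neq 0$; this is where the hypothesis of being away from an evolutionarily singular point actually enters, not merely through the signs of the endpoint eigenvalues. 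A second, smaller slip: $S_{Aa,AA}$ and $S_{Aa,aa}$ have \emph{opposite} signs to first order in $\zeta$ (the paper shows $\frac{dS_{Aa,AA}}{d\zeta}(0)=-\frac{dS_{Aa,aa}}{d\zeta}(0)$), which is what makes invasion imply fixation; your phrase ``of the same sign'' garbles this, although the conclusion you draw from it is the intended one.
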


% either the pure mutant equilibrium or the pure resident equilibrium attracts globally, depending on whether $S_{Aa,AA}>0$ (and then also $S_{Aa,aa}<0$) or $S_{Aa,AA}<0$ (and then also $S_{Aa,aa}>0$).  

\section{The long term large population and rare mutations limit: trait substitution sequences}\label{longterm}

In this section we generalize the clonal theory of adaptive dynamics to the diploid case. We again make the combined large population and rare mutation assumptions, except that we now change the time scale to stay focused on the effect of the mutations. Recall that the mutation probability for an individual with genotype $(u_{1},u_{2})$ is  $\mu_{K}\in (0,1]$.
Thus the time scale of the mutations in the population is ${1\over K\,\mu_{K}}$. We study the long time behavior of the population process in this time scale and prove that it converges to a pure jump process stepping from one homozygote type to another. This process will be a generalization of the simple Trait Substitution Sequences (TSS) that  for the haploid case were heuristically derived in \cite{DL96}, and \cite{ M96} where they were called 'Adaptive Dynamics', and rigorously underpinned  in \cite{C06}, \cite{CM09}.

\medskip
Let us define the set of measures with single homozygote support.
\ben {\cal
    M}_0 = \bigg\{ \bar{n}_{AA} \delta_{(u_{A}, u_{A})}\ ;\
u_{A}\in {\cal U}  \hbox{  and }\ \bar{n}_{AA} \
\hbox{equilibrium of } \eqref{logistic}\bigg\}.\een

\me
We will denote by $J$ the subset of ${\cal U}$ where $\partial_1 S(u;u)$ vanishes.
We make the following hypothesis.

\begin{hypothesis}
For any $u\in J$ we have 
$$
\frac{d}{du}\partial_1 S(u;u)\neq 0\;.
$$
\end{hypothesis}
This hypothesis implies that the zeros of $\partial_1 S(u;u)$ are isolated (see
\cite{dieu}), and since ${\cal U}$ is closed and compact, $J$ is finite.

% HYPOTHESE:  le gradient du gradient ne doit pas s'annuler d'o\`u....We
% will assume that  
% the zeros of the function  $ \partial S$ are isolated et en nombre fini $J$. 

\begin{defi}
\label{ESS}
The points $u^*\in {\cal U}$ such that $ \partial_1 S(u^*;u^*)= 0$ are called  evolutionary singular strategies (ess). 
\end{defi}

Note that because of \eqref{fitness0}, 
$$\partial_2 S(u^*;u^*)=\partial_1S(u^*;u^*)=0.$$

Let us now define the TSS process which will appear in our asymptotic.
\begin{defi}\label{TSS}
For any $\sigma>0$, we define the pure jump process $(Z^\sigma_{t}, t\geq 0)$ with values in ${\cal U}$, as follows: its initial condition is $u_{A_{0}}$ and the process jumps from $u_{A}$ to $u_{a}=u_{A}+h$ with rate
\begin{equation}
\label{taux}
 f( u_{A},u_{A})\, \bar{n}_{AA}\,  \frac{[S(u_{A} +h; u_{A})]_+} 
    { f( u_{A},u_{A}+h)}\,  m_{\sigma}(u_{A},h)dh.
\end{equation}
   \end{defi}

\begin{rem}
Under our assumptions, the jump process $Z^\sigma$ is well defined on $\mathbb{R}_{+}$. Note moreover that the jump  from $u_{A}$ to $u_{a}$ only happens if the invasion fitness $S(u_{a}; u_{A})>0$. 
\end{rem}

We can now state our main theorem.

\begin{thm}
\label{maintheorem} 

 Assume \textup{(A)}. Assume that
  $\nu^{\sigma,K}_0= {\gamma_{K}\over K}\delta_{(u_{A_{0}}, u_{A_{0}})}$ with ${\gamma_{K}\over K}$ converging in law to $\bar{n}_{A_{0}A_{0}}$ uniformly bounded in $L^1$ and  such that $ \partial_1 S(u_{A_{0}},u_{A_{0}})\neq 0$. (That is, the initial population is monomorphic for a type that is not an ess).  Assume finally that
  \begin{equation}
    \label{eq:mu_K-K}
    \forall \,V>0,\quad {\ln K\over \sigma} \ll \frac{1}{K \mu_K}\ll \exp(VK), \quad \hbox{ as } K\to \infty.
  \end{equation}
 For $\eta>0$ introduce the stopping time 
 \be
\label{entrance}
T^{\sigma,K}_{\eta} = \inf\left\{ t>0, \frac{\langle \nu^{\sigma,K}_{t/K \mu_K}, d(., J) \rangle}{\langle \nu^{\sigma,K}_{t/K \mu_K}, 1 \rangle} \leq \eta\right\},
\ee
where $d$ is the  distance on the allelic trait space. 

Extend  ${\cal M}_F$ with the cemetery point $\partial$. 

 Then there exists $\sigma_{0}(\eta)>0$ such that for all
    $0<\sigma<\sigma_{0}(\eta)$,  the process $(\nu^{\sigma,K}_{t/K
    \mu_K}\mathbbm{1}_{\{T^{\sigma,K}_{\eta} \geq t\}} + \partial
    \mathbbm{1}_{\{T^{\sigma,K}_{\eta} < t\}}  ;t\geq 0)$ converges (in the sense of finite dimensional
  distributions on ${\cal M}_F$ equipped with the topology of the
  total variation norm) to the ${\cal 
    M}_0$-valued Markov pure jump process $(\Lambda^\sigma_t; t\geq 0)$ with 
    $$\Lambda^\sigma_t = \bar n( Z^\sigma_{t
   })\delta_{(Z^\sigma_{t }, Z^\sigma_{t })}\mathbbm{1}_{\{T^{\sigma}_{\eta} \geq t\}} + \partial
    \mathbbm{1}_{\{T^{\sigma}_{\eta} < t\}},$$
    where  
    $$T^{\sigma}_{\eta}  = \inf\left\{ t>0,  d(Z^\sigma_{t}, J)  \leq \eta\right\}.$$
The process  $(\Lambda^\sigma_t; t\geq 0)$ is defined as
  follows: $\Lambda^\sigma_0=\bar{n}_{A_{0}A_{0}}\delta_{(u_{A_{0}},
   u_{A_{0}})}$ and $\Lambda^\sigma$ jumps  
  $$
  \hbox{from }\
  \bar{n}_{A,A}\delta_{(u_{A}, u_{A})}\  \hbox{ to }\
   \bar{n}_{a,a}\delta_{(u_{a}, u_{a})}
  $$
  with $u_{a} = u_{A} +h$ and infinitesimal rate \rm{(\ref{taux})}.
%  \begin{equation}
%   \label{taux} 
%    f_{AA}\, \bar{n}_{AA}\,  \frac{[S(u_{a}; u_{A})]_+} 
%    { f( u_{A},u_{a})}\,  m_{\sigma}(u_{A},h)dh.
%  \end{equation}
  %The convergence holds in the sense of finite dimensional  distributions on ${\cal M}_F$ equipped with the topology of the total variation norm.
%   Moreover,  this convergence also holds in law for the $M_1$ metric on
%   $\mathbb{D}([0,+\infty),{\cal M}^F)$.
\end{thm}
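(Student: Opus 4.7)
The plan is to follow the three-phase decomposition classically used for haploid TSS convergence (Champagnat \cite{C06}), adapting each phase to the Mendelian setting. Between successive mutations the population is essentially monomorphic for some resident allele $u_A$: on real-time scales that are $o(1/(K\mu_K))$, Theorem \ref{largepop} applied to the monomorphic case \eqref{logistic} together with the exponential stability of $\bar n_{AA}$ guarantees that the rescaled density stays in an arbitrarily small neighborhood of $\bar n_{AA}\delta_{(u_A,u_A)}$; exit-time estimates of large-deviations type (as in \cite{C06}) combined with the upper bound $1/(K\mu_K)\ll\exp(VK)$ in \eqref{eq:mu_K-K} ensure no spurious exit of this neighborhood occurs before the next mutation.

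Mutations from an $AA$ resident occur at total rate $\bar n_{AA}Kf_{AA}\mu_K\,m_\sigma(u_A,h)\,dh$ and each one produces a single $Aa$ heterozygote with allelic trait $u_a=u_A+h$. I would approximate the dynamics of the few $Aa$ individuals by a linear birth-death branching process against a frozen $AA$ resident background at density $\bar n_{AA}$. The combinatorics of sexual reproduction (the Mendelian factor $\tfrac12$ in the female role being compensated by an identical contribution in the male role) yields total $Aa$-offspring rate $f_{Aa}$ and death rate $D_{Aa}+C_{Aa,AA}\bar n_{AA}$, recovering precisely the eigenvalue $S_{Aa,AA}$ of Proposition \ref{stabfix}. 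Classical coupling of the diploid process with sub- and super-critical branching processes on the time window $[0,C\ln K]$ then yields an invasion probability arbitrarily close to $[S(u_a;u_A)]_+/f(u_A,u_a)$; the lower bound $\ln K/\sigma\ll 1/(K\mu_K)$ guarantees that this coupling runs to completion before the next mutation is likely.

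Once the $Aa$ population reaches a macroscopic level $\eps K$, I would invoke Theorem \ref{largepop} on the rescaled three-type vector and use the geometric singular perturbation picture summarized after Proposition \ref{stabfix}: under the assumption $u_A\notin J$, Theorem \ref{tube} forces the deterministic flow, once close to the invariant manifold joining the $AA$ and $aa$ equilibria, to enter any prescribed neighborhood of $(0,0,\bar n_{aa})$ in time $O(1)$. A final stochastic phase then extinguishes the remaining $O(1)$-density residents of types $AA$ and $Aa$ as subcritical branching processes in time $O(\ln K)$, producing a population monomorphic for $aa$ at density close to $\bar n_{aa}$. The cut-off by $T^{\sigma,K}_\eta$ is required precisely because this ``invasion implies substitution'' step degenerates in an $\eta$-neighborhood of an ess: near $J$ the direction of the flow on the invariant manifold is no longer determined by the sign of $S_{Aa,AA}$, forcing $\sigma$ to be chosen small enough relative to $\eta$ for Theorem \ref{tube} to apply uniformly along the trajectory.

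Assembling the three phases by the strong Markov property shows that, on the rescaled time scale $t/(K\mu_K)$, the waiting time between successive \emph{successful} substitutions is asymptotically exponential with rate
\[
\int f(u_A,u_A)\,\bar n_{AA}\,\frac{[S(u_A+h;u_A)]_+}{f(u_A,u_A+h)}\,m_\sigma(u_A,h)\,dh,
\]
and that conditional on a jump occurring, the new resident trait is distributed according to the corresponding normalized kernel; this is exactly the jump mechanism of $Z^\sigma$ in Definition \ref{TSS}. Finite-dimensional convergence in total variation then follows by induction on the number of substitutions, with the cemetery point $\partial$ absorbing trajectories that reach the $\eta$-neighborhood of $J$. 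The main technical obstacle I expect is the invasion-implies-substitution step for three interacting types: unlike the clonal case where a two-type competitive Lotka-Volterra suffices, here the stochastic-to-deterministic transition at scale $\eps K$ must be tracked along an only approximately invariant manifold, and the uniform control in $\sigma$ and $\eps$ needed to apply Theorem \ref{tube} over the full $O(\ln K)$ window is precisely what dictates the dependency $\sigma_0(\eta)$ in the statement.
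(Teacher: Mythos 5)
Your proposal follows essentially the same route as the paper: large-deviation control of the resident density near $\bar n_{AA}$ up to time $e^{VK}\wedge\tau_1$, a branching-process estimate yielding the invasion probability $[S_{Aa,AA}]_+/f_{Aa}$, the deterministic invariant-manifold phase via Theorems \ref{wsloc} and \ref{tube}, subcritical extinction of the $AA$/$Aa$ remnants, and assembly by the strong Markov property under the time-scale condition \eqref{eq:mu_K-K}. The only point where your sketch under-describes the paper's argument is the first phase: the paper couples the pair $(N_{Aa},N_{aa})$ with \emph{bi-type} branching processes (the $aa$ class having birth rate $o(\varepsilon)$) and extracts the survival probability from the associated two-dimensional generating-function ODE, rather than treating $Aa$ alone as a single-type process.
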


\begin{rem} \rm
Close to singular strategies the convergence to the TSS slows down. To arrive at a convergence proof it is therefore necessary to excise those close neighborhoods. This is done by means of the stopping times $T^{\sigma,K}_{\eta}$ and $T^{\sigma}_{\eta}$: we only consider the process for as long as it stays sufficiently far away from any singular strategies. Assumptions (A) imply that the thus stopped TSS $(Z^\sigma_{t})_{t}$ is well defined on $\mathbb{R}_{+}$. Since its jump measure is absolutely continuous with respect to the  Lebesgue measure, it follows  that 
$T^\sigma_{\eta}$ converges almost surely to $\infty$ when $\eta$ tends to $0$ (for any fixed $\sigma>0$). 
\end{rem}

%We
%refer to Appendix~\ref{sec:pf-PES} for the detailed proof.

\noindent We now roughly describe the successive steps of  the mutation, invasion and substitution dynamics making up the jump events of the limit process, following the biological heuristics of \cite{DL96,M96,Mip}. The details of the proof are described in Appendix \ref{prfmt}, based on the technical Appendices \ref{casgen}  and  \ref{casduX}.

\medskip
\noindent The time scale separation that underlies the limit in Theorem \ref{maintheorem} both simplifies the processes of  invasion and of the substitution of a new successful mutant on the population dynamical time scale and compresses it to a point event on the evolutionary time scale.   The two main simplifications of the processes of mutant invasion and substitution are the stabilization of the resident population before the occurrence of a mutation, simplifying the invasion dynamics, and the restriction of the substitution dynamics to a competition between two alleles. In the jumps on the evolutionary time scale $t/K \mu_K$ these steps occur in opposite order. First comes the attempt at invasion by a mutant, then, if successful, followed by its substitution, that is, the stabilization to a new monomorphic resident  population. After this comes again a waiting time till the next jump.

\noindent  To capture the stabilization of the resident population, we prove, on the assumption that  the starting population is monomorphic with genotype $AA$, that for arbitrary fixed $\varepsilon>0$  for large $K$  the population density $\langle\nu^{\sigma,K}_t,\mathbbm{1}_{\{(u_{A},u_{A})\}}\rangle$  with high probability stays in the $\varepsilon$-neighborhood of $\bar{{n}}_{AA}$  until the next allelic mutant $a$ appears. To this aim, we use large
deviation results for the exit problem from a domain (\cite{FW84}) already proved in \cite{C06} to deduce that  with high probability the time needed
for the population density to leave the $\varepsilon$-neighborhood of
$\bar{{n}}_{AA}$ is bigger than $\exp(VK)$ for some
$V>0$. Therefore, until this exit time, the
rate of mutation from  $AA$ in the population is close to
$\, \mu_K p_{AA}\,f_{AA} \,K\bar{{n}}_{AA}$ and thus, the first
mutation appears before this exit time if one assumes that
$$
\frac{1}{K \mu_K}\ll e^{VK}.
$$
Hence,  on the time scale $t/K \mu_K$ the population level mutation rate from  $AA$ parents is close to
$$  \,p_{AA}\, f_{AA} \,\bar{{n}}_{AA}. $$
% This allows us to compute
% the asymptotic rate of mutation in the population and to determine the
% asymptotic probability that the first mutant is born from an
% individual with trait $x_i$.

\noindent To analyze the fate of these mutants $a$, we divide the population dynamics of the mutant alleles into the three phases shown in Fig.~\ref{substitution}, in a similar way as was done in \cite{C06}.

 \begin{figure}[!ht]
\begin{center}
\includegraphics[angle=-90,scale=.75]{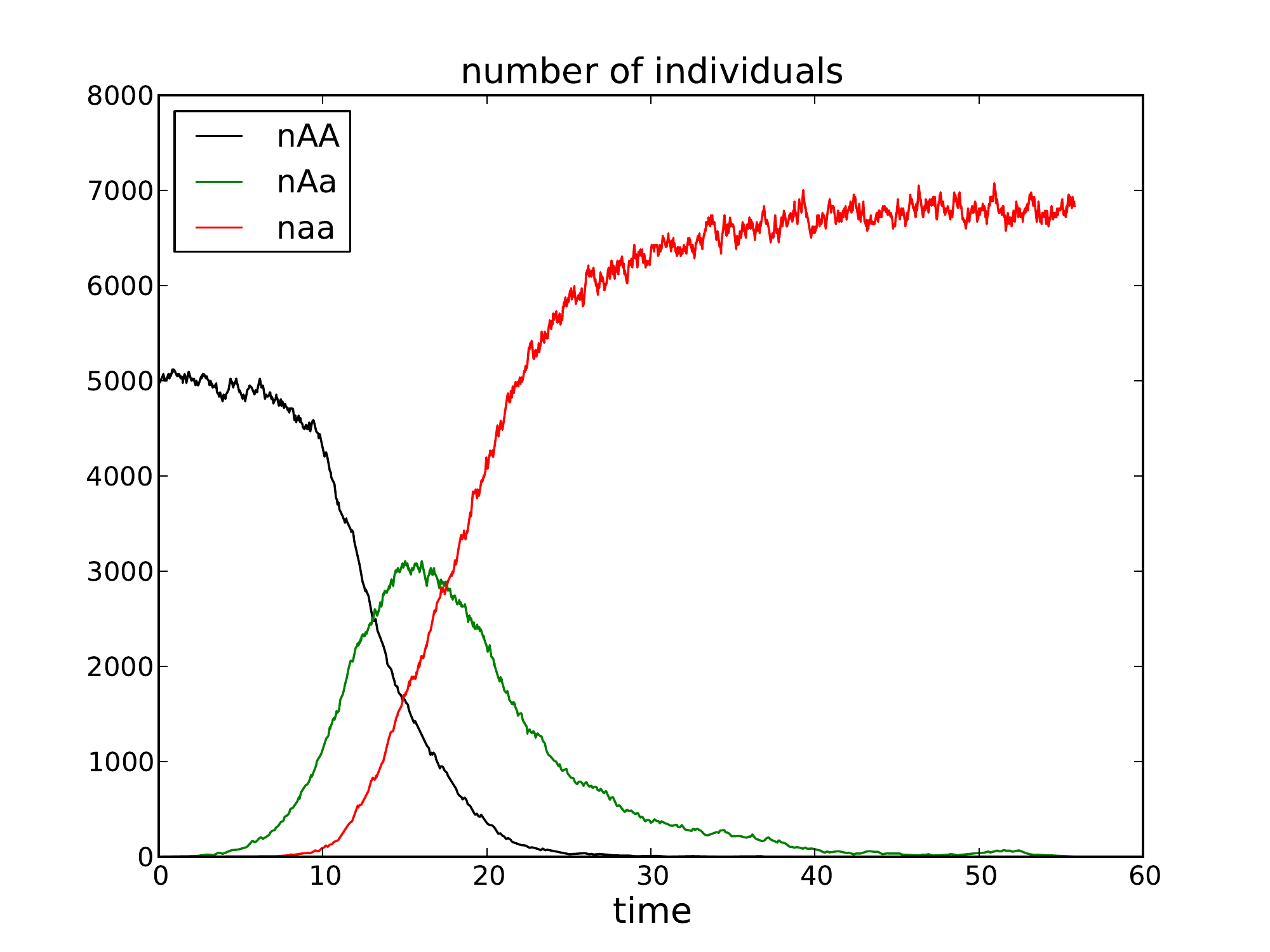}
\end{center}
\caption{Simulation of the three phases of mutant
 invasion.}\label{substitution}
\end{figure}

\noindent In the first phase (between time 0 and $t_1$ in
Fig.~\ref{substitution}), the number of mutant individuals of genotype $Aa$ or $aa$ is small,
and the resident population with genotype $AA$  stays close to its equilibrium density
$\bar{{n}}_{AA}$. Therefore, the dynamics of the mutant
individuals with genotypes $Aa$ and $aa$  is close to a bi-type birth and death  process with  birth rates
$f_{Aa} y + 2 f_{aa} z$ and $0$ and  death rates $(D_{Aa} +C_{Aa,AA}\bar{{n}}_{AA})\, y$ and $(D_{aa} +C_{aa,AA}\bar{{n}}_{AA})\, z$ for a state $(y,z)$.   If the fitness $S_{Aa; AA}$ is positive
(i.e. the branching process is super-critical), the probability
that the mutant population with genotype $Aa$ or $aa$ reaches  $K\,\varepsilon >0$ at some time
$t_1$ is close to the probability that the branching process reaches $K\,\varepsilon >0$, which is itself close to its survival probability
$\frac{[S_{Aa; AA}]_+}{f_{Aa}}$ when $K$ is large.

%\noindent \texttt{** For Sylvie: you still wanted to change the preceding text here yourself, to stress the bivariateness of the process that you studied . However, I do think that what I wrote is right: although in Appendix \ref{prfmt} you formulate everything in terms of a bivariate linear birth and death process, in the limit this process comes closer and closer to the usual univariate one, and the invasion probability that in the end comes out is just that of the univariate process. I think that it is better stress the essence of the argument  than the technical route by which you reached the result, since it is that essence that generalizes to other cases}

\noindent Assuming the mutant population with genotype $Aa$ or $aa$ reaches  $K\,\varepsilon >0$, a second phase starts.  When
$K\rightarrow+\infty$, the population densities
$(\langle\nu_t^{\sigma,K},\mathbbm{1}_{\{AA\}}\rangle,\langle\nu_t^{\sigma,K},\mathbbm{1}_{\{Aa\}}\rangle,\langle\nu_t^{\sigma,K},\mathbbm{1}_{\{aa\}}\rangle)$
are close to the solution of the dynamical  system
\eqref{sysdyn} with the same initial condition, on any
time interval $[0,T]$.  The study of this dynamical system (see Appendices \ref{casgen} and \ref{casduX}) implies  that, if the mutation step $u_{a} - u_{A}$ is
sufficiently small, then any solution to the dynamical system
starting in some neighborhood of
$(\bar{n}_{AA}, 0,0)$ converges to
the new equilibrium $(0, 0, \bar{n}_{aa})$ as time goes to
infinity.  Therefore,  with high probability the population densities reach the $\varepsilon$-neighborhood of $(0, 0, \bar{n}_{aa})$ at some time $t_2$.  Applying the results in Theorems \ref{wsloc} and \ref{tube} for the deterministic system to the approximated stochastic process, is justified by observing that the definition of the stopping times $T^{\sigma,K}_{\eta}$ and $T^{\sigma}_{\eta}$ implies that the allelic trait $u_{A}$ stays at all times away from the set $J$.

\noindent Finally, in the last phase, we use the same idea as in the
first phase: since $(0, 0, \bar{n}_{aa})$ is a strongly locally stable
equilibrium, we can approximate the densities of the
traits $AA$  and $Aa$  by a bi-type sub-critical branching process. Therefore, they reach $0$ in finite time and the process
comes back to where we started our argument (a monomorphic population),  until the next mutation.

\noindent In \cite{CM09}  it is proved  that the duration of these three phases is
of order ${\log K\over \sigma}$. Therefore, under the assumption
$$
\log K \ll \frac{\sigma}{K\mu_K},
$$
the next mutation occurs after these three phases with high probability.
Then the time scale Assumption \eqref{eq:mu_K-K} allows us to conclude, taking the limits $K$ tending to infinity and then $\varepsilon$ to $0$. Then we repeat the argument using the Markov property. 

\bi
\noindent Note that the convergence cannot hold for the usual Skorohod
topology and the space ${\cal M}_F$ equipped with the corresponding weak
topology. Indeed, it can be checked that the total mass of the limit
process is not continuous, which would be in contradiction with the
$C$-tightness of the sequence $(\nu^{\sigma,K}_{t/K \mu_K},t\geq 0)$, which would
hold in case of convergence in law for the Skorohod topology (since the jump amplitudes are equal to ${1\over K}$ and thus tend to $0$ as $K$ tends to infinity). 

\me
However, certain functionals of the process converge in a stronger sense. Let us for example consider the average over the population of the phenotypic trait $\phi$. This can be easily extended to more general symmetric functions of the allele. 

\begin{thm}
\label{SKO} Assume that $u \to \phi(u,u)$ is strictly monotone. Define 
$$T_{\phi, \eta}^{\sigma,K} = \inf\left\{ t>0, d\left( \frac{\langle \nu^{\sigma,K}_{t/K \mu_K}, \phi \rangle}{\langle \nu^{\sigma,K}_{t/K \mu_K}, 1 \rangle}, J_{\phi}\right) \leq \eta\right\},$$ 
where $J_{\phi} = \{\phi(u,u); u\in J\}$.

Under the assumptions of Theorem \ref{maintheorem}, the process $$(R^{\sigma,K}_{t}, t\geq 0)=\left(\frac{\langle \nu^{\sigma,K}_{t/K \mu_K}, \phi\rangle}{\langle \nu^{\sigma,K}_{t/K \mu_K}, 1 \rangle}\, \mathbbm{1}_{\{T^{\sigma,K}_{\phi,\eta} \geq t\}} , \, t\geq 0\right)$$ converges in law in the sense of the Skorohod  $M_{1}$ topology to the process $(\phi(Z^\sigma_{t}Z^\sigma_{t}) \mathbbm{1}_{\{T^{\sigma}_{\phi,\eta} \geq t\}}, \\  t \geq 0)$ where $T^{\sigma}_{\phi,\eta}  = \inf\left\{ t>0, d\left( \phi(Z^\sigma_{t}, Z^\sigma_{t}), J_{\phi}\right) \leq \eta\right\}$.
\end{thm}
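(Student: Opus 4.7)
\noindent\emph{Proof plan.} The idea is to upgrade the finite-dimensional convergence of Theorem~\ref{maintheorem} to functional convergence in $M_1$ by a tightness argument. The Skorohod $J_1$ topology is not available (as noted in the remark preceding the theorem): the path $R^{\sigma,K}$ is quasi-continuous with jumps of size $O(1/K)$, whereas the limit is a pure jump process. The $M_1$ topology is tailored to this mismatch, allowing a limiting jump to be approximated by a steep, approximately monotone continuous transition in the prelimit. The plan is therefore: (i) lift the finite-dimensional convergence from Theorem~\ref{maintheorem} to the scalar functional $R^{\sigma,K}$ by the continuous mapping theorem on the total variation norm topology; (ii) prove $M_1$ tightness by controlling the Skorohod--Whitt oscillation modulus $w'_{M_1}(R^{\sigma,K},\delta,T)$ that measures deviation of the path from the line segment between two endpoints over small windows.

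The three-phase decomposition already used for Theorem~\ref{maintheorem} dictates what happens at each jump of the limit. Between successive mutations the resident $AA$ population stays, with probability going to one, within an $\varepsilon$-neighborhood of $\bar n_{AA}$, so $R^{\sigma,K}_t=\phi(u_A,u_A)+O(\varepsilon)$. At a successful invasion the mutant sub-population first grows as a super-critical branching process (Phase~1), then the rescaled densities follow the deterministic system \eqref{sysdyn} along the invariant manifold from a neighborhood of $(\bar n_{AA},0,0)$ to a neighborhood of $(0,0,\bar n_{aa})$ (Phase~2, by Theorems~\ref{wsloc} and~\ref{tube}), and finally the residual $AA$ and $Aa$ classes die out as a sub-critical branching process (Phase~3). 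The total duration of these three phases is $O(\log K/\sigma)$ which, under assumption~\eqref{eq:mu_K-K}, is negligible on the mutational time scale $t/(K\mu_K)$; a substitution thus collapses to a single point on the macroscopic time scale, producing a jump in the limit.

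The main obstacle is to show that along such a substitution window $R^{\sigma,K}$ stays, up to $o(1)$, inside the interval with endpoints $\phi(u_A,u_A)$ and $\phi(u_a,u_a)$, which is exactly the $M_1$ analogue of monotonicity. Writing
$$R = \frac{\phi_{AA}x + \phi_{Aa}y + \phi_{aa}z}{x+y+z},$$
the symmetry $\phi(u_1,u_2)=\phi(u_2,u_1)$ together with the local additivity noted in the introduction gives $\phi_{Aa}=(\phi_{AA}+\phi_{aa})/2+O(\sigma^2)$. Combined with the assumed strict monotonicity of $u\mapsto \phi(u,u)$, this means that for $u_a=u_A+h$ with $|h|\leq\sigma$ small the three values $\phi_{AA},\phi_{Aa},\phi_{aa}$ are monotonically ordered on the real line up to $O(\sigma^2)$. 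Hence any convex combination, and in particular $R$, lies in the segment between $\phi_{AA}$ and $\phi_{aa}$ up to $O(\sigma^2)$, which bounds the $M_1$ oscillation across a substitution window by $O(\sigma^2)$. Outside the substitution windows, the large deviation exit estimate already used in the proof of Theorem~\ref{maintheorem} controls fluctuations of $R^{\sigma,K}$ around the current homozygote value. Summing these contributions and taking $K\to\infty$ then $\varepsilon\to 0$ yields the vanishing of $w'_{M_1}(R^{\sigma,K},\delta,T)$ in probability as $\delta\to 0$, giving $M_1$ tightness.

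Finite-dimensional convergence of $R^{\sigma,K}_t$ to $\phi(Z^\sigma_t,Z^\sigma_t)$ at continuity points of the limit is direct from Theorem~\ref{maintheorem} applied to the bounded continuous functional $\nu\mapsto \langle\nu,\phi\rangle/\langle\nu,1\rangle$ on compacts bounded away from the zero measure (the cemetery extension at $T^{\sigma,K}_{\phi,\eta}$ removes the only obstruction to boundedness of this map). The stopping time $T^{\sigma,K}_{\phi,\eta}$ converges in law to $T^\sigma_{\phi,\eta}$ since $J_\phi=\phi(J\times J|_{\mathrm{diag}})$ is finite by hypothesis on $J$ and strict monotonicity of $u\mapsto\phi(u,u)$, making the map that sends a path to its hitting time of an $\eta$-neighborhood of $J_\phi$ continuous at the limit path with probability one. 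Combining $M_1$ tightness with finite-dimensional convergence yields the stated convergence in law in the $M_1$ topology.
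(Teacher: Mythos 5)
Your overall architecture matches the paper's: finite-dimensional convergence from Theorem \ref{maintheorem}, then control of the $M_1$ oscillation modulus $w_\delta$, reduction to at most one mutation per window of length $\delta$, and approximation of the substitution window by the deterministic flow $\varphi_t$ of \eqref{leX}. However, your key step for bounding the oscillation across a substitution window contains a genuine gap. You argue that since $\phi_{Aa}=(\phi_{AA}+\phi_{aa})/2+O(\sigma^2)$ and $u\mapsto\phi(u,u)$ is monotone, the convex combination $R=(\phi_{AA}x+\phi_{Aa}y+\phi_{aa}z)/(x+y+z)$ lies in the segment $[\phi_{AA},\phi_{aa}]$ up to $O(\sigma^2)$, and you conclude that this "bounds the $M_1$ oscillation across a substitution window by $O(\sigma^2)$." This conflates two different things. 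The $M_1$ modulus requires $d\bigl(R(t),[R(t_1),R(t_2)]\bigr)$ to be small for \emph{all} $t_1\le t\le t_2$ in the window, i.e.\ it requires the transition to be (approximately) \emph{monotone in time}; membership of $R(t)$ in the fixed interval $[\phi_{AA},\phi_{aa}]$ does not give this. If the path rose toward $\phi_{aa}$, dipped back toward $\phi_{AA}$, and rose again, one could choose $t_1,t_2$ near the two dips and $t$ near the peak, making the modulus of order $|\phi_{aa}-\phi_{AA}|=O(\sigma)$ — and since $\sigma$ is \emph{fixed} in the theorem, an $O(\sigma)$ (or even $O(\sigma^2)$) bound does not yield $\lim_{\delta\to0}\limsup_K\mathbb{P}(w_\delta>\eta)=0$ for every $\eta>0$.

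What is actually needed, and what the paper proves, is that the deterministic profile $F_{W_\phi}(t)=\langle\varphi_t(M_0),W_\phi\rangle/\langle\varphi_t(M_0),\vec1\rangle$ is \emph{strictly monotone in $t$} along the substitution trajectory (Corollary \ref{monotonie}), so that $w_\delta(F_{W_\phi})=0$. In the additive approximation $R$ is an affine function of the mutant allele frequency, so this amounts to the nontrivial dynamical fact that the allele frequency evolves monotonically along the invariant curve $\Gamma_\zeta$ during fixation. The paper obtains it by computing $dF_{W}/dt$ on $\Gamma_\zeta$, expanding $W_\phi=\phi_{AA}\vec1+\zeta\,\frac{d\phi}{du}(0,\tfrac12,1)^{\mathrm T}+O(\zeta^2)$, and checking that $\langle d\Gamma_0/dv,\,dW_\phi/d\zeta(0)\rangle=\tfrac12\neq0$ (Proposition \ref{monotone}); strict monotonicity of $u\mapsto\phi(u,u)$ enters precisely to guarantee this derivative is nonzero, not merely to order the three genotypic values. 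Your proposal never establishes, or even identifies, this monotonicity-in-time statement, so the $M_1$ tightness claim is unsupported as written.
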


The Skorohod  $M_{1}$  topology is a weaker topology than the usual $J_{1}$ topology, allowing processes with jumps tending to $0$ to converge to processes with jumps (see \cite{S56}). For a c\`ad-l\`ag function $x$ on $[0,T]$, the continuity modulus for the $M_{1}$ topology  is given by
$$
w_{\delta}(x) = \sup_{\stackrel{\scriptstyle 0\leq t_{1}\leq t\leq t_{2}\leq T;} {0\leq t_{2}-t_{1}\leq \delta}} d(x(t), [x(t_{1}), x(t_{2})]).
$$
Note that if the function $x$ is monotone, then $w_{\delta}(x) = 0$. 

\bi
\begin{proof}
From the results of Theorem \ref{maintheorem}, it follows easily that 
finite dimensional distributions of $(R^{\sigma,K}_{t}, t\geq 0)$ converge to those of $(\phi(Z^\sigma_{t}, Z^\sigma_{t}), t\geq 0)$. By  \cite{S56} Theorem 3.2.1, it remains to prove that for all $\eta>0$, 
$$\lim_{\delta\to 0} \limsup_{K\to \infty} \mathbb{P}(w_{\delta}(R^{\sigma,K}_{t}) >\eta) = 0.
$$

\noindent The rate of mutations of $(R^{\sigma,K}_{t}, t\leq T)$ being bounded, the probability that two mutations occur within a time less that $\delta$ is $o(\delta)$. It is therefore enough to study the case where there is  at most one mutation on the time interval $[0,\delta]$.   As  in the proof of Proposition \ref{largepoptri}, with probability tending to $1$ when $K$ tends to infinity,  the process $(R^{\sigma,K}_{t}, t\geq 0)$  is close to $F_{W_{\phi}}({t/K \mu_K})$ where $F_{W_{\phi}}$ is defined by
$$F_{W_{\phi}}(t) = \frac{\langle  \varphi_{t}(M_{0})\;,\; W_{\phi}\rangle}
{\langle \varphi_{t}(M_{0})\;,\; 1\rangle}\;,$$ and 
$$
 W_{\phi}=\left(\begin{array}{c}
\phi\big(u_{A},u_{A}\big)\\
\phi\big(u_{A},u_{a}\big)\\
\phi\big(u_{a},u_{a}\big)\\
\end{array}\right)
\;.
$$
Recall that $\varphi_{t}$ is the flow defined by  the  vector field
(see Proposition \ref{largepoptri}). 
Away from invading mutations, the function $F_{W_{\phi}}$ is constant and the modulus of continuity tends to $0$. Around an invading mutation, it follows from Corollary \ref{monotonie} that the function $F_{W_{\phi}}$ is monotone. Therefore the same conclusion holds. 

 \end{proof}

\section{Small mutational steps - the time scale  of the canonical equation}\label{CE}

\noindent We are now interested to study the convergence of the TSS when the mutation amplitude $\sigma$ tends to zero. Without rescaling time, the TSS trivially tends to a constant. In order to get a nontrivial limit, we have to rescale time adequately, namely with ${1\over \sigma^2}$, since $S(u_{A};u_{A}) = 0$.

\begin{thm} 
\label{canon} 
Assume that the initial values $Z^\sigma_{0}
$ are uniformly bounded in $L^2$ and that they converge to $Z^0_{0}$ as $\sigma$ tends to $0$. Then,  the sequence of processes $(Z^\sigma_{{t/\sigma^2}}, t\geq 0)$ tends in law in $\mathbb{D}([0,T], \mathbb{R})$ to the deterministic (continuous) solution $(u(t), t\geq 0)$ of the canonical equation
\be
\label{canon1}
{d\over dt}u(t) = \,f(u(t),u(t)) \, \bar n(u(t))\, \int_{\mathbb{R}} h\, [h\,\partial_1 S(u(t);u(t))]_{+} \, m(u(t),h)dh,
\ee
where 
$$  \bar n(u) = \frac{f(u,u) -D(u,u)}{C((u,u),(u,u))}.
$$
\end{thm}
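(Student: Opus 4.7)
The natural route is a martingale/generator convergence argument in the spirit of Ethier--Kurtz. Let $\tilde Z^\sigma_t := Z^\sigma_{t/\sigma^2}$. Its infinitesimal generator, acting on $g\in C^2(\mathcal{U})$, is, in view of Definition \ref{TSS},
\begin{equation*}
\tilde{\mathcal L}^\sigma g(u)=\sigma^{-2}\int_{\mathbb R}\bigl[g(u+h)-g(u)\bigr]\,f(u,u)\,\bar n(u)\,\frac{[S(u+h;u)]_+}{f(u,u+h)}\,m_\sigma(u,h)\,dh.
\end{equation*}
Substituting $h=\sigma k$ (so that $m_\sigma(u,h)dh=m(u,k)dk$ and $k$ ranges over the compact support of $m(u,\cdot)$), and using the two key Taylor expansions
\begin{equation*}
g(u+\sigma k)-g(u)=\sigma k\,g'(u)+O(\sigma^2\|g''\|_\infty),
\end{equation*}
\begin{equation*}
S(u+\sigma k;u)=\sigma k\,\partial_1 S(u;u)+O(\sigma^2),
\end{equation*}
(the latter crucially using $S(u;u)=0$ from \eqref{fitness0}), together with $f(u,u+\sigma k)=f(u,u)+O(\sigma)$, one gets $[S(u+\sigma k;u)]_+=\sigma[k\,\partial_1 S(u;u)]_++O(\sigma^2)$ and hence, uniformly on $\mathcal U$,
\begin{equation*}
\tilde{\mathcal L}^\sigma g(u)\ \xrightarrow[\sigma\to 0]{}\ \mathcal L^0 g(u):= g'(u)\,f(u,u)\,\bar n(u)\int_{\mathbb R} h\,[h\,\partial_1 S(u;u)]_+\,m(u,h)\,dh.
\end{equation*}
The control of error terms is uniform in $u$ because $k\mapsto [k\,\partial_1 S(u;u)]_+$ is Lipschitz in $k$, while the set $\{k:k\,\partial_1 S(u;u)=0\}$ has Lebesgue measure zero and smoothness of all other ingredients lets us pull the $[\,\cdot\,]_+$ across the asymptotics without trouble.

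\textbf{Tightness.} The total jump rate at state $u$ is $O(\sigma^{-1})$ in the rescaled time (because the jump rate in the original time is $O(\sigma)$, as follows from $[S(u+\sigma k;u)]_+=O(\sigma)$), while the jump amplitude is at most $\sigma$. Thus, for every stopping time $\tau\le T$ and every $\delta>0$,
\begin{equation*}
\mathbb E\bigl|\tilde Z^\sigma_{\tau+\delta}-\tilde Z^\sigma_\tau\bigr|\le C\delta,\qquad
\mathbb E\bigl[(\tilde Z^\sigma_{\tau+\delta}-\tilde Z^\sigma_\tau)^2\bigr]\le C\,\sigma\,\delta .
\end{equation*}
Aldous' criterion then yields tightness of $(\tilde Z^\sigma)_\sigma$ in $\mathbb D([0,T],\mathbb R)$, and the $O(\sigma)$ bound on jump amplitudes forces $C$-tightness, so every limit point has continuous sample paths.

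\textbf{Identification of the limit.} Setting $F(u):=f(u,u)\bar n(u)\int h[h\,\partial_1 S(u;u)]_+\,m(u,h)\,dh$, the smoothness assumptions (A1)--(A3) and the compactness of $\mathcal U$ make $F$ Lipschitz, so the ODE $\dot u=F(u)$ with initial condition $Z^0_0$ has a unique solution $u(\cdot)$. By standard martingale problem arguments, any limit point $\bar Z$ of $(\tilde Z^\sigma)$ satisfies
\begin{equation*}
g(\bar Z_t)-g(\bar Z_0)-\int_0^t F(\bar Z_s)\,g'(\bar Z_s)\,ds \ \text{ is a martingale for every } g\in C^2(\mathcal U),
\end{equation*}
which, since $\bar Z$ is continuous, forces $\bar Z=u(\cdot)$ almost surely. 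Uniqueness of the limit together with tightness gives convergence in law of the full sequence. The $L^2$ boundedness of the initial data is needed at the very start to deduce convergence in law of $Z^\sigma_0$ to a deterministic $Z^0_0$ and to pass to the limit in the generators on a uniformly bounded set.

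\textbf{Main obstacle.} The delicate part is the uniform generator convergence around the ``kink'' $\{k:k\,\partial_1 S(u;u)=0\}$ of the positive part, combined with the need to control the $O(\sigma^2)$-correction inside $[\,\cdot\,]_+$. This is handled by splitting the $k$-integration into the region where $|k\,\partial_1 S(u;u)|\ge \sigma^{1/2}$, on which the positive part is smooth and Taylor expansion is clean, and the region where $|k\,\partial_1 S(u;u)|<\sigma^{1/2}$, whose $m(u,\cdot)$-measure is $O(\sigma^{1/2})$ and contributes a vanishing term after the $\sigma^{-2}$ rescaling. Everything else is a routine application of Ethier--Kurtz.
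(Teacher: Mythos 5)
Your route is exactly the paper's: its entire proof of Theorem \ref{canon} is the one-line deferral to Theorem 4.1 of Champagnat--M\'el\'eard (2011), which is precisely the Ethier--Kurtz scheme you lay out (Taylor expansion of the $\sigma^{-2}$-rescaled generator using $S(u;u)=0$ from \eqref{fitness0}, Aldous tightness with jump amplitudes $O(\sigma)$ forcing $C$-tightness, and identification of the continuous limit through the first-order martingale problem and uniqueness of the ODE); your final splitting around the kink of $[\,\cdot\,]_+$ is even redundant, since the global $1$-Lipschitz bound $|[a]_+-[b]_+|\le|a-b|$ that you already invoke handles it uniformly. One bookkeeping point you should fix: by your own expansion the ratio $f(u,u)/f(u,u+\sigma k)$ tends to $1$, so the limit generator that actually follows from the jump rate \eqref{taux} is $g'(u)\,\bar n(u)\int h\,[h\,\partial_1S(u;u)]_+\,m(u,h)\,dh$ \emph{without} the prefactor $f(u,u)$ --- your displayed limit does not follow from your displayed expansion, and the prefactor appears to be an inconsistency between \eqref{taux} and \eqref{canon1} in the paper itself that you have inherited rather than derived.
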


\noindent The proof of this theorem is similar to the proof of Theorem 4.1 in  \cite{CM09}. 

In this general form the canonical equation is still of little practical use, although already some qualitative conclusions can be drawn from it. The trait increases whenever the fitness gradient $\partial_1 S(u;u)$ is positive and decreases when it is negative, i.e., movement is always uphill with respect to the current allelic fitness landscape $S(\cdot;u)$. The equilibria of  (\ref{canon1}) correspond to the allelic evolutionarily singular strategies, except that close to those strategies (\ref{canon1}) is no longer applicable since in their neighborhood the convergence of the underlying individual-based process to the simple TSS becomes slower and slower. So all we can deduce from the canonical equation (\ref{canon1}) is that for small mutational steps the trait substitution sequence will move to some close neighborhood of an allelic evolutionarily singular strategy. 

\begin{rem} \rm
If we had considered extended TSSes taking values in the powers of the trait space as is done in \cite{M96}, the convergence to the canonical equation would similarly have gone awry due to a slowing down of the convergence near evolutionarily singular strategies, and the occurrence of polymorphism close to some of them, with adaptive branching as a particularly salient  example; branching  can only be investigated with a time scaling different from the one for the canonical equation \cite{M96,CM09}.
\end{rem}

To get from the previous observation to some biological conclusion we need to decompose the genotypic fitness function $S$ into its ecological and developmental components
\begin{equation}
\begin{array}{rl}
\label{fitness2}
S_{Aa,AA} =& \tilde{S}(\phi_{Aa};\phi_{AA})= \tilde{f}(\phi_{Aa}) -\tilde{D}(\phi_{Aa})-\tilde{C}(\phi_{Aa},\phi_{AA}) \frac{\tilde{f}(\phi_{AA}) -\tilde{D}(\phi_{AA})}{\tilde{C}(\phi_{AA},\phi_{AA})}, \\
\phi_{Aa}=&\phi(u_A,u_a), \quad \phi_{AA}=\phi(u_A,u_A), \quad \tilde{f}(\phi_{Aa})=f(u_A,u_a),\quad \mathrm{etc.}
\end{array}
\end{equation}
and 
\begin{equation}
\partial_1 S(u;u)=\partial_1 \tilde{S}(\phi(u;u);\phi(u;u))\partial_1\phi(u,u).
\end{equation}
Hence, the allelic singular strategies are of two different types, ecological, characterized by $\tilde{S}(\phi(u;u);\phi(u;u))=0$, and developmental, characterized by $\partial_1\phi(u,u)=0$. On the phenotypic level the latter are perceived as developmental constraints (c.f. \cite{VD00}). 
%In our mathematical derivations we have assumed away the latter type of singularities by requiring that $\phi$ be monotone.

To arrive at quantitative conclusions we have to make additional assumptions about the within individual processes. One often used assumption is that the mutation distribution is symmetric. With that assumption (\ref{canon1}) reduces to
\be
\label{canon2}
{d\over dt}u(t) =  \, {1 \over2}\,\bar n(u(t))\, V_\mathrm{a}(u(t))\partial_1 S(u(t);u(t)),
\ee
with $V_\mathrm{a}$ the allelic mutational variance. (The factor ${1\over2}$ comes from the fact that the integration is only over a half-line.) This equation can easily be lifted to the phenotypic level as
\be
\label{canon3}
{d\over dt}U(t) =  \,  \bar n(U(t))\, V_\mathrm{p}(U(t))\partial_1 \tilde{S}(U(t);U(t)),
\ee
with $U=\phi(u,u)$ and $V_\mathrm{p}$ the phenotypic mutational variance, an equation fully phrased in population level observables. The factor ${1\over2}$ is canceled by a factor 2 coming from the fact that the fitness $\tilde{S}$ refers to  heterozygotes with only one mutant allele, while after a substitution the other allele is also a mutant one. For this equation only the ecological singular strategies remain while developmental constraints appear in the form of $V_\mathrm{p}$ becoming zero (c.f. \cite{VD00}). (It is also possible to lift (\ref{canon1}) to the phenotypic level. However, the truncated first and second moments that appear in the resulting expression are no longer well-established statistics that can be measured independent of any knowledge of the surrounding ecology.)

\section{Discussion}
\label{disc}
This paper forms part of a series by a varied collection of authors that aim at putting the tools of adaptive dynamics on a rigorous footing \cite{MNG92,DL96,M96,GKMM98,CFM06,C06,DMM08,MT09,CM09,Mip,{KlebSagVatHacJag11}, bovier-champagnat-08} (see also \cite{diekmann-jabin-al-05,barles-perthame-06,CCP07,DJMG08}). It is the first  in the series  to treat the individual-based justification of the adaptive dynamics tools in a genetic setting. As such it forms the counterpart of the more heuristic, but also more general \cite{Mip}. We only consider unstructured Lotka-Volterra type populations and single locus genetics, in line with applied papers such as \cite{KG99,VD99,PP06,PB08}. For such models we proved the convergence (for large population sizes and suitably small mutation probabilities) of the individual-based stochastic process to the TSS of adaptive dynamics, and the subsequent convergence (for small mutational steps) of the TSS to the canonical equation. Not wholly unexpectedly, the results are in agreement with the assumed framework of the more applied work. Yet, to arrive at a rigorous proof new developments were needed, like the derivation  of a rigorous estimate for the probability of invasion in a dynamic diploid population (Appendix \ref{prfmt}), a rigorous, geometric singular perturbation theory based, invasion implies substitution theorem (appendix \ref{casduX}), and the use of the Skorohod $M_1$ topology to arrive at a functional convergence result for the TSS (Section \ref{longterm}).

Below we list the remaining biological limitations of the present results and the corresponding required further developments.

The first limitation is the assumption of an unstructured population.  For a a fair number of real populations the assumption of random deaths appears to match the observations, but no organisms reproduce in a Poisson process starting at birth. Moreover, in nature a good amount of population regulation occurs through processes affecting the birth rate, as when a scarcity of resources translates in  a delay of maturing to the reproductive condition. \cite{DMM08} heuristically treats very general life histories (although only for a finite number of birth states, a finite number of variables channeling the interaction between individuals, and a deterministic population dynamics converging to a unique equilibrium) based on the population dynamical modeling framework of \cite{DGMT98,DGHKMT01,DGM03}. However, it only considers the convergence to the canonical equation, starting from the TSS, conjectured to be derivable from the population dynamical model, with the goal of relating its coefficient functions to observationally accessible statistics of individual behavior. In fact, even the convergence to a deterministic population model, as in Theorem \ref{largepop}, does not easily fit in the scheme of \cite{FM04} in the (biologically common) cases where the movement of individuals through their state spaces depends directly or indirectly on the population size and composition.  (The special case where this movement  decomposes in a product of a population- and a state-dependent term is covered in \cite{Tran06,Tran08,FT09}).

A further limitation is that we assumed the trait to be governed by only a single locus (in keeping with a well-established tradition starting with \cite{MK65}). The more locus case still has to be worked out. The superficially more easy case with infinitely many loci, so that no mutant  ever occurs on the same locus, is considered from a heuristic perspective in \cite{{Mip,MetzKovel}}. However, the problem  of rigorously setting up the underlying  individual-based model as a limit for models with an ever increasing number of loci still needs to be tackled.

The final extension to be considered is to higher dimensional geno- and phenotypic trait spaces. We conclude with a heuristic discussion of the form such an extension will take. On the genotypic level the canonical equation will take essentially the same form as (\ref{canon1}) and (\ref{canon2}), with scalar $u$, $h$ and $\partial_1S$ replaced by vectors, and the mutational variance by a covariance matrix, just as this is written in \cite{DL96,CFM06,DMM08,CM09}  for the clonal and \cite{Mip,MetzKovel} for the Mendelian case. However, there is one remaining snag, which is the reason why we opted for treating only the one-dimensional case. In the directions orthogonal to the selection gradient the fitness landscape around the resident strategy has the same shape as at an evolutionarily singular strategy. In the one-dimensional case we opted for just removing the neighborhoods of the singular strategies. If we were to apply the same strategy for the higher dimensional case we would have to remove all residents. The way out is by observing that the directions where something awry may occur are but a very small minority among all possible directions in which mutations may occur. Heuristic calculations suggest that the trouble only occurs in a narrow double horn with a boundary that at the resident strategy is tangent to the linear manifold orthogonal to the selection gradient, so that when the mutational step size $\sigma$ goes to zero, the probability of a mutant ending up in that horn decreases as some higher power of $\sigma$. Moreover, in the directions orthogonal to the fitness gradient the fitness is a quadratic function, making the probability of invasion scale not linearly but quadratically with the size of any mutational steps in those directions. The main problem with such mutants  is that some of them may on the population dynamical time scale keep coexisting with the resident.  Further heuristic calculations then suggest that for such a resident pair the probability of invasion of a subsequent mutant more in the direction of the fitness gradient is to the lowest order of approximation - in the distance between the two residents - equal to the probability of invasion in a monomorphic population of the average type, and that such a mutant ousts both residents. Therefore the general (i.e., more type) TSS is close to a simple TSS in which those untoward mutants are just removed from the consideration, the smaller the mutational step the closer. We put rigorously underpinning this scenario forward as the last of our list of challenges.

\bi
\textbf{Acknowledgements}: This work benefitted from the support from the "Chair Mod\'elisation Math\'ematique et Biodiversit\'e of Veolia Environnement-Ecole Polytechnique-Museum National d'Histoire Naturelle-Fondation X" and from the ANR MANEGE.

\bigskip
\begin{flushleft} {\Large{ \textbf {Appendices}}} \end{flushleft}

\appendix

 \section{A few words about competition kernels}\label{compkern}
In the ecological literature the models described in Section \ref{sec:model} are known as Lotka-Volterra competition models  \cite{L25,V31}. The early LV models were all deterministic, phrased as ODEs corresponding to large population limits such as considered in the Section \ref{short term}, without mutations. The determinism together with the assumption of clonal reproduction obviated the need to separately model birth and deaths: competition was represented as its overall effect on the population growth rate. The later stochastic models, e.g. \cite{DL96,M96}, usually put the effect of competition only in the death rate, as otherwise the chosen linear form of the interaction might lead to negative birth rates.

The simplest case is when $C=0$. This is the case customarily put forward in population genetics textbooks as starting point for the derivation of their deterministic models for gene frequency change by selection,  but for the fact that population geneticists usually work in discrete time. The unnatural consequence that the population either will die out or will keep growing indefinitely is made invisible by transforming to relative frequencies. The more realistic case of non-selective competition, $C((u_{1}, u_{2});(v_{1}, v_{2}))=\overset{\lower0.5em\hbox{$\smash{\scriptscriptstyle\frown}$}}{C}(v_1, v_2)$, leads to the same population genetical equations. The selective pressures on the gene frequencies then do not change with the population size or composition as they are caused only by differences in the fixed mortalities and fertilities. 

Where in population genetics the early selection models assumed indefinitely growing populations, the early stochastic models, in continuous time the Moran-type models, assumed constant population sizes. Although later variable population sizes were introduced, it was just assumed that these sizes fluctuated between positive lower and upper bounds \cite{Karlin68, Donnelli85}.  Stochastic models with the population regulation represented in accordance with ecological tradition are relative newcomers (e.g. \cite{Redig}).

The  case where  the additional death rate incurred by an individual from its competitive interaction depends only on the genotype of the focal individual and not on that of  its competitors is known in the ecological literature as purely density dependent selection \cite{Rough71, Rough76, Rough79} , and in the mathematical literature as logistic population regulation. This logistic case can be generalized to $C((u_{1}, u_{2});(v_{1}, v_{2}))=\overset{\lower0.5em\hbox{$\smash{\scriptscriptstyle\smile}$}}{C}(u_1,u_2)\overset{\lower0.5em\hbox{$\smash{\scriptscriptstyle\frown}$}}{C}(v_1, v_2)$, when it is not the total density but e.g. the total biomass that determines the felt competitive effect and different genotypes have different biomasses.  A further generalization is that population growth is regulated by a finite number of variables, think for example of the combination of space and nitrogen depletion:
$$C((u_{1}, u_{2});(v_{1}, v_{2}))=\sum_{i=1}^k \overset{\lower0.5em\hbox{$\smash{\scriptscriptstyle\smile}$}}{C}_i(u_1,u_2)\overset{\lower0.5em\hbox{$\smash{\scriptscriptstyle\frown}$}}{C}_i(v_1, v_2).$$
The vector  $(\overset{\lower0.5em\hbox{$\smash{\scriptscriptstyle\frown}$}}{C}_1,\dots , \overset{\lower0.5em\hbox{$\smash{\scriptscriptstyle\frown}$}}{C}_k)^\mathrm{T}$ is known as the impact of the individuals on their environment, and the vector $(\overset{\lower0.5em\hbox{$\smash{\scriptscriptstyle\smile}$}}{C}_1,\dots , \overset{\lower0.5em\hbox{$\smash{\scriptscriptstyle\smile}$}}{C}_k)$ as their sensitivity a \cite{Mesz06}. The latter generalization is evolutionarily richer in that it can allow diversification, which is excluded by the earlier considered kernels.  In \cite{DMM08} it is shown heuristically that close to an evolutionarily singular strategy any clonal  model evolutionarily behaves like a Lotka-Volterra competition model of the above type with $k$ equal to one plus the dimension of the trait space. 

The above considerations all come from either ecology or population genetics, and originally were phrased for a fixed  finite number of  types, clonal ones in the ecological  and Mendelian ones in the population genetics literature. The first model characterizing these types in terms of traits was formulated by Robert MacArthur and Richard Levins \cite{MAL64}, see also \cite{MacA70}. This model was later used to great effect by a large number of authors  (e.g.  \cite{Levins68,  MAL67,May73, May74, Rough76,ChristFench77,Rough79, Slat80}, but see also \cite{Rough89}) to study species packing population dynamically as well as evolutionarily. The first genetic model of this type was studied by Freddy Bugge Christiansen and Volker Loeschcke \cite{ChrLoe80,LoeChr84,ChrLoe87} , who considered the possibilities for the coexistence of finite numbers of genotypes. Explicit trait-based LV-style birth and death process models with mutation only appeared on the scene with the birth of adaptive dynamics \cite{DL96, M96}.

The most common assumption in trait-based LV competition models \cite{MAL64,MacA70,MacA72,Rough79}  is that 
$$C((u_1,u_2);(v_1,v_2))=C((u_1,u_2);(u_1,u_2))
{\int Q(u_1,u_2)q((u_1,u_2);z) Q(v_1,v_2)q(v_1,v_2;z) \mathrm{d}z \over \int Q^2(u_1,u_2)q^2((u_1,u_2);z)\mathrm{d}z}.$$
%%$$C((u_1,u_2);(v_1,v_2))={f(u_1,u_2)-D(u_1,u_2) \over k(u_1,u_2)}{Q(v_1,v_2) \over Q(u_1,u_2)}{\int q((u_1,u_2);z) q(v_1,v_2;z) \mathrm{d}z \over \int q^2((u_1,u_2);z)\mathrm{d}z}.$$
Here $z\in \mathbb{R}$ is customarily interpreted as a trait of a fine-grained self-renewing resource with a fast logistic dynamics  that is supposed to be non-evolving. That is, it is assumed that a resource unit comprises close to infinitely many  very small particles, so that the resource dynamics can be treated as deterministic and that the turnover of the resource  is very fast so that it effectively tracks its deterministic equilibrium as set by the current consumer population. Functions of $(u_1,u_2)$ depend again on this argument through $\phi$. $Q$ is the average rate constant for the encounter and absorption of resource particles by our consumer individuals, expressed in resource units, while $q$ tells how this use is spread over the resource axis. 

\noindent The most commonly  used parametric form is 
$$ f(u_1,u_2)-D(u_1,u_2)=:r(u_1,u_2)=\bar{r}, $$ 
$$  
{{r(u_1,u_2)}\over{C(u_1,u_2);(u_1,u_2))}}=:k(u_1,u_2)=\exp \left(
-\frac 
{\left( \phi(u_1,u_2)-\phi_0 \right)^2}
{ 2 \sigma_k^2}
\right),  $$ 
$$Q(u_1,u_2)q((u_1,u_2);z)=\exp\left(-
\frac{\left(z-\phi(u_1,u_2)\right)^2}{\sigma_a^2}
\right),$$
leading to
$$C((u_1,u_2);(v_1,v_2))=\bar{r} \exp\left(-
\frac{\left(\phi(u_1,u_2)-\phi(v_1,v_2)\right)^2} {2\sigma_a^2} +
\frac{\left(\phi(u_1,u_2)-\phi_0\right)^2} {2 \sigma_k^2}
\right)  $$
Deterministic models based on this kernel have all sorts of nice mathematical properties, but Adaptive Dynamically they are a bit degenerate in that when $\sigma_a<\sigma_k$ the final stop for trait substitution sequences that result from the long term large population and rare mutations limit, as treated in Section \ref{longterm}, is a Gaussian distribution over trait space (c.f. \cite{Rough79}) whereas for almost any slightly different model the final stop has finite support \cite{GylMesz05, LeiDoeDieck08}. For this reason adaptive dynamics researchers started to use slightly modified expressions for $k$ or $C$. (When $K$ is still finite, the number of branches visible in simulations also stays finite, due to the early abortion of incipient ones, with the number of recognizable branches becoming larger with increasing $K$ and $\sigma_k/\sigma_a$ \cite{ClaesEA07,{ClaesEA08}}.) Exploring the consequences of all sorts of different competition kernels by now has become a little growth industry;  a good sample may be found in \cite{doe11}.

\begin{rem}\rm
The description of the mechanism underlying the competition kernel given above was  a bit brash, in keeping with biological tradition. Starting from an underlying fast logistic resource dynamics actually gives
$$f(u_1,u_2)=y(\phi(u_1,u_2))\left(\int{v(\phi(u_1,u_2),z)w(z)k_\mathrm{R}(z)\mathrm{d}z-d_1(\phi(u_1,u_2))}\right),  $$
$$D(u_1,u_2)=d_2(\phi(u_1,u_2))$$
$$
C((u_1,u_2);(v_1,v_2))=y(x)\int{v(\phi(u_1,u_2);z)\frac{w(z)k_\mathrm{R}(z)}{r_\mathrm{R}(z)}v(\phi(v_1,v_2);z)\mathrm{d}z}
$$
 and hence 
$$
Q(u_1,u_2)q((u_1,u_2);z)=Vv(\phi(u_1,u_2);z)\left(\frac
{w(z)k_\mathrm{R}(z)}
{r_\mathrm{R}(z)}
\right)^{{1}/{2}}
$$
with $y$ the yield, i.e., $y^{-1}$ is the resource mass needed to make one consumer, $w$ the mass of a resource unit, $v$ the rate constant of consumers encountering and eating resource units, $d_1$ the rate constant of consumer mass loss due to basal metabolism, and $d_2$ the  consumer mortality rate, $r_\mathrm{R}$ the low density reproductive rate of the resource, and $k_\mathrm{R}$ its carrying capacity. $V$ is some unknown proportionality constant. (In the above terms the time scale separation results from  both $r_\mathrm{R}$ and $v$ being very large and $y$ very small with the product of $y$ and $v$ being $\mathrm{O}(1)$.) Apparently the interpretation of $Q$ and $q$  is more complicated than the standardly attributed one based on the assumption of constant  $w k_\mathrm{R} / r_\mathrm{R}$.

\noindent Although time-honoured, the above described mechanistic underpinning  is not without flaws, as explicitly laid out by \cite{Chesson90}. In the derivation it is assumed that, but for the indirect coupling through the consumers, the dynamics of different resources are independent.  Even very similar resource populations do not compete. However, this is only possible if their ecological properties depend everywhere discontinuously on the trait $z$, since the assumed logistic nature of the resource dynamics means that there is non-negligible  competition between equal resource particles. The alternative assumption alluded to by MacArthur \cite{MacA72} that the intrinsic resource dynamics is of a chemostat type (as can be approximately the case for seeds from perennial plants) also is problematical: Under the reasonable assumption that the resource mass removed by a consumer population equals the mass this population acquires, the detrimental effect from competition  becomes non-linear in the competitor densities, instead of being  simply representable by a competition kernel. 
 \end{rem}

\section{Properties of the vector field (\ref{leX}.)} 
\label{casgen}

\subsection{Neutral case.}
We first consider the case of neutrality between the $A$ and $a$
alleles, namely  $f_{a_{1}a_{2}}=f$, $D_{a_{1}a_{2}}=D^{0}$ and 
$C_{a_{1}a_{2}\,,\,b_{1}b_{2}}=D^{1}$ for 
$a_{1}a_{2},b_{1}\,b_{2}=AA,Aa,aa$. 
We have in this case with $n=x+y+z$
$$
p=\frac{x+y/2}{n}
$$ 
which is the proportion of allele $A$.
We get for the vector field
$$
 X_{0}=\left(\begin{array}{c}
f(x+y/2)p-(D^{0}+D^{1}n)x\\
f(x+y/2)(1-p)+f(z+y/2)p-(D^{0}+D^{1}n)y\\
f(z+y/2)(1-p)-(D^{0}+D^{1}n)z\\
\end{array}\right)
$$

\begin{thm}\label{neutre} 
The vector field $ X_{0}$ has a line of fixed points given by 
$$
\Gamma_{0}(v)=
\begin{pmatrix}{\frac{\dst v^2-2\,{ n_0}\,v+{ n_0}^2}{\dst 4\,{ n_0}}}\cr 
 -{\frac{\dst v^2-{ n_0}^2}{\dst 2\,{ n_0}}}\cr \frac{\dst {v^2+2\,{ n_0}\,v+
 { n_0}^2}}{\dst 4\,{ n_0}}\cr 
\end{pmatrix}\;,
$$
with $n_{0}=(f-D^{0})/D^{1}$. 
That is, we have for any $v$, $ X_{0}(\Gamma_{0}(v))=0$. 
The parametrization with $v$ is chosen such that the 
differential of the vector field $ X_{0}$ at each point of  the curve
$\Gamma_{0}$,
$DX_{0}(\Gamma_{0}(v))$,  has the
three eigenvectors
$$
 e_{1}(v)=\Gamma_{0}(v))=\begin{pmatrix}
\frac{\dst v^2-2\,{ n_0}\,v+{
      n_0}^2}{\dst 4\,{ n_0}}\cr  
 -\frac{\dst v^2-{ n_0}^2}{\dst 2\,{ n_0}}\cr \frac{\dst v^2+2\,{ n_0}\,v+
 {{ n_0}^2}}{\dst 4\,{ n_0}}\cr 
\end{pmatrix}\;,
$$
$$
 e_{2}(v)=\frac{\dst d\Gamma_{0}(v)}{\dst dv}=
\begin{pmatrix}
\frac{\dst v- n_0}{\dst 2\, n_0}\cr 
 -\frac{\dst v}{\dst  n_0}\cr 
\frac{\dst v+ n_0}{\dst 2\, n_0}\cr 
\end{pmatrix}
\;, 
$$
$$ e_{3}(v)=\frac{\dst d^{2}\Gamma_{0}}{\dst dv^{2}}=
\frac{1}{2n_{0}}\begin{pmatrix}
1\\
-2\\
1\\
\end{pmatrix}
$$
with respective eigenvalues $D^{0}-f<0$, $0$, and $-f<0$. The corresponding eigenvectors of the
transposed matrix $DX_{0}(\Gamma_{0}(v))^{t}$, to be denoted by by $ \beta_{1}(v)$, $ \beta_{2}(v)$ and $\beta_{3}(v)$ can be normalized such that for
any $i,j,\in\{1,2,3\}$ and any $v$
$$
\langle  \beta_{i}(v)\,,\,  e_{j}(v)\rangle=\delta_{i,j}\;.
$$
\end{thm}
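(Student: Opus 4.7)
\textbf{Proof plan for Theorem \ref{neutre}.}

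The key structural observation is that $\Gamma_0(v)$ is just the Hardy--Weinberg distribution with total density $n_0=(f-D^{0})/D^{1}$ and allele frequency $p=(n_0-v)/(2n_0)$, since a direct check gives $x+y+z=n_0$ and $(x+y/2)/n=p$, with $(x,y,z)=(n_0 p^2,2n_0 p(1-p),n_0(1-p)^2)$. First I would verify that $X_0(\Gamma_0(v))=0$ by plugging these Hardy--Weinberg values into the three components of $X_0$: the gametic frequency $p$ and total $n$ are preserved, so the birth term in component $i$ equals $f\,n_0 p^2$, $2f n_0 p(1-p)$, $f n_0(1-p)^2$ respectively, while the death coefficient $D^{0}+D^{1}n_0=f$ by definition of $n_0$, giving cancellation component-wise.

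The eigenvector $e_2=d\Gamma_0/dv$ with eigenvalue $0$ is immediate: differentiating the identity $X_0(\Gamma_0(v))\equiv 0$ with respect to $v$ yields $DX_0(\Gamma_0(v))\,e_2(v)=0$. For $e_1=\Gamma_0(v)$ I would exploit the radial scaling along the ray through $\Gamma_0(v)$. Writing $(x,y,z)=\lambda\,\Gamma_0(v)$ leaves $p$ unchanged and multiplies $n$ by $\lambda$, so a direct calculation (component by component, using $f-D^{1}n_0=D^{0}$) gives
\[
X_0\bigl(\lambda\,\Gamma_0(v)\bigr)=\lambda(1-\lambda)(f-D^{0})\,\Gamma_0(v).
\]
Differentiating with respect to $\lambda$ at $\lambda=1$ and applying the chain rule yields $DX_0(\Gamma_0(v))\,\Gamma_0(v)=(D^{0}-f)\,\Gamma_0(v)$, which is the claimed eigen-relation.

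For $e_3=(1,-2,1)/(2n_0)$ the crucial observation is that this direction is orthogonal to both conserved linear quantities associated with Hardy--Weinberg balance: the total $1\cdot 1+1\cdot(-2)+1\cdot 1=0$ and the gametic pool $1\cdot 1+\tfrac12(-2)+0\cdot 1=0$ vanish along $e_3$. Hence a perturbation in the $e_3$ direction changes neither $n$ nor the allele frequency $p$, so every nonlinear (birth) term in $X_0$ is left unchanged to first order, and only the linear death terms $-(D^{0}+D^{1}n)\,(x,y,z)^{t}$ contribute. Evaluating at $n=n_0$ gives the death coefficient $D^{0}+D^{1}n_0=f$, so $DX_0(\Gamma_0(v))\,e_3=-f\,e_3$. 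Under assumptions (A1)--(A2) (which give $f>D^{0}>0$) the three eigenvalues $D^{0}-f$, $0$, $-f$ are distinct, so each generates a one-dimensional eigenspace and $(e_1,e_2,e_3)$ is a basis of $\RR^{3}$.

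Finally, for the biorthogonal family $\beta_1,\beta_2,\beta_3$ I would invoke the standard fact that eigenvectors of $DX_0(\Gamma_0(v))^{t}$ associated with distinct eigenvalues are orthogonal to eigenvectors of $DX_0(\Gamma_0(v))$ associated with different eigenvalues. Choosing $\beta_i$ as a nonzero eigenvector of $DX_0(\Gamma_0(v))^{t}$ with the eigenvalue matching that of $e_i$, biorthogonality forces $\langle\beta_i,e_j\rangle=0$ for $i\neq j$, while $\langle\beta_i,e_i\rangle\neq 0$ (otherwise $\beta_i$ would be orthogonal to the whole basis), so one can rescale $\beta_i$ to obtain $\langle\beta_i,e_j\rangle=\delta_{ij}$. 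The only real bookkeeping obstacle is the component-wise algebra in steps one through three; everything else is a clean consequence of the Hardy--Weinberg structure and of preservation of the two conserved quantities $n$ and $x+y/2$ under the radial and $e_3$ perturbations.
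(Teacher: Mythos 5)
Your proof is correct, and it reaches the spectral conclusions by a route that differs in execution from the paper's. The paper changes variables to $(n,p,h)$ with $n=x+y+z$, $p=(x+y/2)/n$ and $h=y/n-2p(1-p)$, observes that the vector field becomes the triangular field $Y_0(n,p,h)=\bigl(f-(D^0+D^1 n)n,\,0,\,-fh\bigr)$ (up to the obvious factor) vanishing on $\{n=n_0,\,h=0\}$, and then reads off the eigenstructure ``by standard computations'' after substituting $v=n_0(1-2p)$. You instead stay in the original coordinates and derive each eigenrelation from a functional identity: differentiating $X_0(\Gamma_0(v))\equiv 0$ in $v$ for the zero eigenvalue, the exact scaling identity $X_0(\lambda\Gamma_0(v))=\lambda(1-\lambda)(f-D^0)\Gamma_0(v)$ for the eigenvalue $D^0-f$, and the exact invariance of $x+y/2$, $z+y/2$ and $n$ along $(1,-2,1)$ for the eigenvalue $-f$. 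These three identities are precisely the coordinate-free counterparts of the three decoupled rows of $DY_0$, so the two arguments rest on the same Hardy--Weinberg structure; yours has the advantage of making explicit \emph{why} each eigenvector carries its eigenvalue without computing the Jacobian of the coordinate change, while the paper's is more compact and hands you the global phase portrait (monotone relaxation of $n$ and $h$, conservation of $p$) for free. One small caveat: (A1)--(A2) only guarantee $f>D^0\geq 0$, not $D^0>0$, so if $D^0=0$ the eigenvalues $D^0-f$ and $-f$ coincide and your ``distinct eigenvalues'' argument for the linear independence of $(e_1,e_2,e_3)$ degenerates; this is harmless because independence can be checked directly (the determinant of the frame is $-1/2$, which is how the paper later verifies that $(v,r,s)\mapsto M(v,r,s)$ is a diffeomorphism), and the dual basis $\beta_i$ then automatically consists of eigenvectors of the transpose since $\langle DX_0^{t}\beta_i,e_j\rangle=\lambda_j\delta_{ij}$.
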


\begin{proof}
This is easily seen by using the standard variables: total population density, $n=x+y+z$,
relative frequency of the $A$ allele, $p=(x+y/2)/n$, and excess heterozygosity realtive to the Hardy-Weinberg proportion,  $h=y/n-2p(1-p)$.

\noindent In these new coordinates, the vector field $ X_{0}$ becomes the
vector field $ Y_{0}$ given by
$$
 Y_{0}(n,p,h)=\left(\begin{array}{c}
f-(D^{0}+D^{1}\,n)\,n\\
0\\
-f\,h\\
\end{array}\right)\;.
$$
This vector field obviously vanishes on the line $n=n_{0},\;
h=0$. One gets immediately the results by taking
$v=n_{0}\,(1-2\,p)$. The spectral results follow by standard
computations. 
\end{proof}

\subsection{Small perturbations.}

We now assume that mutations are small. We denote by $\mut$ the variation of the
allelic trait $\mut=u_{a}-u_{A}$. The vector field depends on $\mut$
and will be denoted by $ X(\mut,M)$. We assume regularity in $\mut$
and $M$, and observe that $ X(0,M)= X_{0}(M)$.

In practice we will apply our results to the vector field \eqref{leX}
which has a particular algebraic form. It is however convenient to
derive the perturbation results in full generality. We will come back
to the particular case of \eqref{leX} in section \ref{casduX}.

From now on, we will assume that the vector field $ X(\mut,\,\cdot\,)$
 satisfies the following properties for any $x$, any $z$ and any $\mut$
\begin{equation}\label{axesinv}
X_{x}\big(\mut,(0,0,z)\big)=X_{y}\big(\mut,(0,0,z)\big)=0\;,
$$
and
$$
X_{z}\big(\mut,(x,0,0)\big)=X_{y}\big(\mut,(x,0,0)\big)=0\;.
\end{equation}
This comes from the fact that pure homozygotic populations stay pure
homozygotic forever.

Our goal in this section is to understand the time asymptotic of the
flow associated to the vector field $ X(\mut,M)$.

Since the curve $\Gamma_{0}$ is transversally
hyperbolic (even transversally contracting, see Proposition 
\ref{neutre}) for the vector field $ X_{0}$,
we can apply Theorem 4.1 in \cite{HPS} to conclude that for
$\mut$ small enough, there is an attracting  curve $\Gamma_{\mut}$ invariant
by $ X$.  Moreover, $\Gamma_{\mut}$ is regular and converges to
$\Gamma_{0}$ when  $\mut$ tends to zero.  In other words, there is a
small enough tubular neighborhood $\mathscr{V}$ of $\Gamma_{0}$ such
that for any $|\mut|$ small enough, $\Gamma_{\mut}$ is contained in 
$\mathscr{V}$ and attracts all the orbits with initial conditions in 
$\mathscr{V}$. (For earlier, weaker results in this direction for general differential and difference equation population dynamical models without genetics see \cite[Appendix B]{GeritzTube,DercoleRinaldi08}.)

Applying Theorem 4.1 in \cite{HPS} requires
that the curve $\Gamma_{0}$ is a compact manifold without boundary,
but this is not the case here. However one can
perform some standard surgery to put our problem in this form in a
neighborhood of the part of $\Gamma_{0}$ which lies in the positive
quadrant which is the only part of phase space that matters for us.

\subsubsection{Location of the zeros of the perturbed vector field.}

Since the curve $\Gamma_{\mut}$ is invariant and (locally) attracting
for the flow associated to the vector field $ X(\mut,M)$, where $M$ stands for the vector $x,y,z)$ it is enough
to study the flow on this curve. In particular, since $\Gamma_{\mut}$
is a curve, if the vector field does not vanish on $\Gamma_{\mut}$
except at the intersections with the lines $x=y=0$ and $y=z=0$ (the
fixed points $aa$ and $AA$ respectively see Theorem \ref{stabfix}), we
know that the orbit of any initial condition on $\Gamma_{\mut}$
(between $AA$ and $aa$)  will converge either to $AA$ or to $aa$.

We now look for the fixed points on $\Gamma_{\mut}$ of the flow
associated to the vector field $ X(\mut,M)$ which are the points where
the vector field vanishes. Since $\Gamma_{\mut}$ is attracting, it is
equivalent (and more convenient) to look for the fixed points in
$\mathscr{V}$.

It is convenient to use for this study 
local  frames in  the  tubular neighborhood $\mathscr{V}$ of
$\Gamma_{0}$.  There are many possibilities for defining such frames,
 we found that a convenient one is to represent a point
$M$ by the parametrisation
$$
M(v,r,s)=\Gamma_{0}(v)+r  e_{1}(v)+s  e_{3}(v) 
= (1+r)\Gamma_{0}(v)+s\frac{d^{2}\Gamma_{0}(v)}{dv^{2}}\;.
$$
with $v\in[-n_{0}-\delta, n_{0}+\delta]$, $r\in[-\delta,\delta]$,
$s\in[-\delta,\delta]$ with $\delta>0$ to be chosen small enough later
on. We observe that $M(v,0,0)=\Gamma_{0}(v)$.

The Jacobian of the transformation $(v,r,s)\mapsto
(x,y,z)=M(v,r,s)$ is equal to
$
- (1+r)/2
$
and therefore does not vanish if $0<\delta<1$. It is easy to
verify that if $\delta>0$ is small enough, 
the map $(v,r,s)\mapsto M(v,r,s)$ is a diffeomorphism of
$[-n_{0}-\delta, n_{0}+\delta]\times [-\delta,\delta]^{2}$ to a close
neighborhood of $\mathscr{V}$ (provided this tubular neighborhood is
small enough).  In particular, once $\delta>0$ is chosen, for any
$\mut>0$ small enough, $\mathscr{V}$ contains the intersection of
$\Gamma_{\mut}$ with the first quadrant (by continuity of
$\Gamma_{\mut}$ in $\mut$).

In order to find the zeros of the vector field $ X(\mut,M)$, we will
use convenient linear combinations of its components which reflect the
fact that the flow is transversally hyperbolic. We will first equate
to zero two linear combinations of the components, and by the implicit
function theorem this will lead to a curve containing all possible
zeros. We will then look at the points on this curve where the third
(independent) linear combination of the components vanishes.

\begin{prop}\label{courbezeros}
For any $\delta>0$ small enough, there is a number
$\mut_{0}=\mut_{0}(\delta)$
such that for any $\mut\in [-\mut_{0},\mut_{0}]$ 
there is a smooth curve $\mathscr{Z}_{\mut}
=(r_{\mut}(v),s_{\mut}(v))\subset\RR^{2}$,
 depending smoothly on $\mut$,
and converging to $0$ when $\mut$ tends to zero
such that  for any $v\in[-n_{0}-\delta,n_{0}+\delta]$ we have
$$
\langle  \beta_{1}(v)\,,\, 
X\big(\mut,M(v,r_{\mut}(v),s_{\mut}(v))\big)\rangle=
\langle  \beta_{3}(v)\,,\,
 X\big(\mut,M(v,r_{\mut}(v),s_{\mut}(v))\big)\rangle=0\;. 
$$
Moreover, if a point $(v,r,s)$ with $v\in
[-n_{0}-\delta,n_{0}+\delta]$, $r$ and $s$ small enough is such that
$$
\langle  \beta_{1}(v)\,,\, X\big(\mut,M(v,r,s)\big)\rangle =
\langle  \beta_{3}(v)\,,\, X\big(\mut,M(v,r,s)\big)\rangle =0
$$
then $(r,s)=(r_{\mut}(v),s_{\mut}(v))$.
\end{prop}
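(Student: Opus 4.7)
My plan is to rephrase the two conditions as a zero-finding problem for a smooth $\RR^2$-valued map and apply the implicit function theorem uniformly in $v$. Define
\begin{equation*}
F(v,r,s,\mut) = \bigl( \langle \beta_1(v),\, X(\mut, M(v,r,s)) \rangle,\ \langle \beta_3(v),\, X(\mut, M(v,r,s)) \rangle \bigr) \in \RR^2,
\end{equation*}
for $v \in [-n_0-\delta,\, n_0+\delta]$, $(r,s)$ in a small neighborhood of $(0,0)$, and $\mut$ in a small neighborhood of $0$. By Theorem \ref{neutre} the unperturbed vector field $X_0 = X(0,\,\cdot\,)$ vanishes identically along $\Gamma_0$, so $F(v,0,0,0)=0$ for every $v$ in the whole interval.

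The key computation is the partial Jacobian of $F$ in $(r,s)$ at $(v,0,0,0)$. Since $\partial_r M(v,0,0) = e_1(v)$ and $\partial_s M(v,0,0) = e_3(v)$, the chain rule, combined with the eigenvalue identities of Theorem \ref{neutre},
\begin{equation*}
DX_0(\Gamma_0(v))\, e_1(v) = (D^0 - f)\, e_1(v), \qquad DX_0(\Gamma_0(v))\, e_3(v) = -f\, e_3(v),
\end{equation*}
together with the biorthogonality $\langle \beta_i(v), e_j(v) \rangle = \delta_{i,j}$, reduces the Jacobian to the diagonal matrix $\mathrm{diag}(D^0 - f,\, -f)$. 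This is nonsingular, since $f - D^0 > 0$ by assumption (A2) and $f > 0$, and, crucially, it is \emph{independent of $v$}.

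The implicit function theorem therefore applies at each $v \in [-n_0 - \delta,\, n_0 + \delta]$, yielding locally a unique smooth solution $(r,s) = (r_\mut(v), s_\mut(v))$ of $F = 0$ with $r_0 \equiv s_0 \equiv 0$. Compactness of the $v$-interval and the uniform invertibility of the diagonal Jacobian supply a single radius $\mut_0(\delta)>0$ and a smoothness-preserving patching of the local solutions into one smooth map $v \mapsto (r_\mut(v), s_\mut(v))$ on the whole interval, depending smoothly on $\mut$. The second assertion is the local uniqueness part of the implicit function theorem, obtained by shrinking the $r,s$-neighborhood further if needed so that the Lipschitz bound coming from the bounded inverse Jacobian forces any additional zero to coincide with $(r_\mut(v), s_\mut(v))$.

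The only potential obstacle is uniformity in $v$, but this is cheaply resolved: the entries of $\partial_{(r,s)} F$ at $(v,0,0,0)$ are the $v$-independent constants above, and the higher derivatives of $F$ are uniformly bounded on the compact parameter set. The rest is bookkeeping.
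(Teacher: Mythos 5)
Your proposal is correct and follows essentially the same route as the paper: both set up the same map $F$, invoke the implicit function theorem at each $v$ using the invertibility of the $(r,s)$-Jacobian at $\mut=0$ (whose determinant $f(f-D^{0})\neq 0$ the paper states and you compute explicitly as $\mathrm{diag}(D^{0}-f,\,-f)$ via the eigenvalue identities and biorthogonality), and then patch the local solutions over the compact $v$-interval, with the uniqueness clause coming from the uniqueness part of the implicit function theorem.
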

\begin{proof}
Consider the map $F$ from $\RR^{2}\times \RR^{2}$ to $\RR^{2}$ given by
$$
F\big((\mut,v),(r,s)\big)=
\big(\langle  \beta_{1}(v)\,,\, X\big(\mut,M(v,r,s)\big)\rangle,
\langle  \beta_{3}(v)\,,\, X\big(\mut,M(v,r,s)\big)\rangle
\big)\;.
$$
For any  $v_{0}\in [-n_{0}-\delta,n_{0}+\delta]$, and $|\mut|$ small
enough, the differential of
$F$ in
$(r,s)$ at $(0,v_{0},0,0)$ is invertible. This follows by continuity 
from the same
result in $\mut=0$ where the determinant of the differential is $f(f-D^{0})$.
Therefore, by the implicit
function theorem (see for example \cite{dieu}), for any $v_{0}\in
[-n_{0}-\delta,n_{0}+\delta]$, there is an open 
neighborhood $U_{v_{0}}$ of $(v_{0},0)$ in $\RR^{2}$ and two regular
functions functions on  $U_{v_{0}}$,  $r^{v_{0}}$ and $s^{v_{0}}$ such
that for any $(\mut,v)\in U_{v_{0}}$ we have
$$
F\big((\mut,v),(r^{v_{0}}(\mut,v),s^{v_{0}}(\mut,v))\big)=0\;.
$$
Since the set $[-n_{0}-\delta,n_{0}+\delta]\times \{0\}$ is compact in
$\RR^{2}$, we can find a finite sequence $v_{1},\ldots,v_{m}$ such
that the finite sequence of sets $(U_{v_{j}})$ is a finite open
cover of $[-n_{0}-\delta,n_{0}+\delta]\times \{0\}$. We now define the
functions $r$ and $s$ in the tubular neighborhood $\cup_{j}U_{v_{j}}$
of $[-n_{0}-\delta,n_{0}+\delta]\times \{0\}$ by
$$
r(\mut,v)=r^{v_{j}}(\mut,v)\;,\quad s(\mut,v)=s^{v_{j}}(\mut,v)\;,
\quad \mathrm{for} (\mut,v)\in U_{v_{j}}\;.
$$
This definition is consistent since if $(\mut,v)\in U_{v_{j}}\cap 
U_{v_{\ell}}$ with $\ell\neq j$
we have
$r^{v_{j}}(\mut,v)=r^{v_{\ell}}(\mut,v)$ and
$s^{v_{j}}(\mut,v)=s^{v_{\ell}}(\mut,v)$ by the uniqueness of the 
solution in the implicit function theorem. The last assertion of the
proposition follows also from the uniqueness of the 
solution in the implicit function theorem.
\end{proof}

It follows immediately from the above result that 
 the vector field $ X(\mut,\,\cdot\,)$ vanishes in
a small enough neighborhood of $\Gamma_{0}$ if and only if
$$
\langle  \beta_{2}(v)\,,\,
X\big(\mut,M(v,r_{\mut}(v),s_{\mut}(v))\big)\rangle
=0\;,
$$
which at a given $\mut$ is an equation for $v$. 

We analyze a neighborhood of the point $\mut=0$. We first observe
that
$$
\langle  \beta_{2}(v)\,,\, X\big(0,M(v,r_{0}(v),s_{0}(v))\big)\rangle=
\langle  \beta_{2}(v)\,,\, X\big(0,M(v,0,0)\big)\rangle
$$
$$
=
\langle  \beta_{2}(v)\,,\, X\big(0,\Gamma_{0}(v))\big)\rangle=0\;.
$$
Therefore by the Malgrange preparation Theorem \cite{gg} (the
Weierstrass preparation Theorem in the analytic setting), we can write
\begin{equation}\label{malgrange}
\langle  \beta_{2}(v)\,,\,
X\big(\mut,M(v,r_{\mut}(v),s_{\mut}(v))\big)\rangle= 
\mut^{2}h(\mut,v)+\mut g(v)\;.
\end{equation}
\begin{lem}
The function $g$ in \eqref{malgrange} is given by
$$
g(v)=\langle  \beta_{2}(v)\,,\partial_{\mut} X(0,\Gamma_{0}(v))\rangle\;.
$$
\end{lem}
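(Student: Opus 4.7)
The plan is to identify $g(v)$ by differentiating both sides of \eqref{malgrange} with respect to $\zeta$ and evaluating at $\zeta=0$. The right-hand side $\zeta^2 h(\zeta,v)+\zeta g(v)$ has $\zeta$-derivative equal to $g(v)$ at $\zeta=0$, so it suffices to compute the corresponding derivative of the left-hand side.

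Applying the chain rule to $\langle\beta_2(v), X(\zeta, M(v,r_\zeta(v),s_\zeta(v)))\rangle$ at $\zeta=0$, and using that $r_0(v)=s_0(v)=0$ so that $M(v,r_0(v),s_0(v))=\Gamma_0(v)$, one obtains
\[
\partial_\zeta\!\mid_{\zeta=0}\;\langle\beta_2(v),X(\zeta,M(v,r_\zeta,s_\zeta))\rangle
= \langle\beta_2(v),\partial_\zeta X(0,\Gamma_0(v))\rangle
+ \langle\beta_2(v), DX_0(\Gamma_0(v))\big(\alpha_1(v) e_1(v)+\alpha_3(v) e_3(v)\big)\rangle,
\]
where $\alpha_1(v)=\partial_\zeta r_\zeta(v)\mid_{\zeta=0}$ and $\alpha_3(v)=\partial_\zeta s_\zeta(v)\mid_{\zeta=0}$.

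The second piece vanishes: by Theorem~\ref{neutre}, $e_1(v)$ and $e_3(v)$ are eigenvectors of $DX_0(\Gamma_0(v))$ with eigenvalues $D^0-f$ and $-f$ respectively, and the biorthogonality relation $\langle\beta_i(v),e_j(v)\rangle=\delta_{ij}$ gives $\langle\beta_2(v),DX_0(\Gamma_0(v))e_i(v)\rangle=\lambda_i\langle\beta_2(v),e_i(v)\rangle=0$ for $i\in\{1,3\}$. Hence the cross term drops out and equating with the $\zeta$-derivative of the right-hand side yields $g(v)=\langle\beta_2(v),\partial_\zeta X(0,\Gamma_0(v))\rangle$, as claimed.

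There is no real obstacle here beyond book-keeping; the only subtlety is noticing that the (a priori unknown) derivatives $\alpha_1(v),\alpha_3(v)$ of the implicit-function-theorem curves $r_\zeta(v),s_\zeta(v)$ do not enter, precisely because the contracting directions $e_1,e_3$ are annihilated by $\beta_2$ after application of $DX_0$. This reflects the transversal hyperbolicity of $\Gamma_0$: the perturbation of the zero set along $\Gamma_0$ is governed solely by the component of $\partial_\zeta X$ along the neutral direction $e_2$.
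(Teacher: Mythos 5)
Your proposal is correct and follows essentially the same route as the paper: differentiate the left-hand side of \eqref{malgrange} in $\mut$ at $\mut=0$, apply the chain rule using $r_{0}(v)=s_{0}(v)=0$ and $\partial_{r}M(v,0,0)=e_{1}(v)$, $\partial_{s}M(v,0,0)=e_{3}(v)$, and kill the cross terms via the eigenvector property and biorthogonality from Theorem \ref{neutre}. Your closing remark correctly identifies the point the paper leaves implicit in ``follows at once from Proposition \ref{neutre}'', namely that $\langle\beta_{2}(v),DX_{0}(\Gamma_{0}(v))e_{i}(v)\rangle=\lambda_{i}\,\delta_{2,i}=0$ for $i\in\{1,3\}$, so the unknown derivatives of the implicit curves never enter.
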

\begin{proof}
We have
$$
g(v)=\bigg\langle  \beta_{2}(v)\,,\,\big(\partial_{\mut} 
X\big(\mut,M(v,r_{\mut}(v),s_{\mut}(v))\big)
_{\big|\mut=0}\bigg\rangle
$$
$$
=\langle  \beta_{2}(v)\,,\,\partial_{\mut}
X\big(0,\Gamma_{0}(v)\big)\rangle
+\bigg\langle  \beta_{2}(v)\,,\,D
X(0,\Gamma_{0}(v))\;\partial_{r}M(v,0,0)\; 
\partial_{\mut}r_{\mut}(v))_{\big|\mut=0}\bigg\rangle
$$
$$
+\bigg \langle  \beta_{2}(v)\,,\,D
X(0,\Gamma_{0}(v))\;\partial_{s}M(v,0,0)\; 
\partial_{\mut}s_{\mut}(v))_{\big|\mut=0}\bigg\rangle
$$
$$
= \langle  \beta_{2}(v)\,,\, \partial_{\mut}
X\big(0,\Gamma_{0}(v)\big)
+\bigg \langle  \beta_{2}(v)\,,\,D X(0,\Gamma_{0}(v))\; e_{1}(v)\;
\partial_{\mut}r_{\mut}(v))_{\big|\mut=0}\bigg\rangle
$$
$$
+\bigg \langle  \beta_{2}(v)\,,\,D X(0,\Gamma_{0}(v))\; e_{3}(v)\;
\partial_{\mut}s_{\mut}(v))_{\big|\mut=0}\bigg\rangle\;.
$$
The lemma follows at once from Proposition \ref{neutre}.
\end{proof}

The following result gives conditions for the
perturbed vector field to have only two fixed points near $\Gamma_{0}$.

\begin{thm}\label{leve}
Assume the function 
$$
g(v)=\langle  \beta_{2}(v)\,,\partial_{\mut} X(0,\Gamma_{0}(v))\rangle\;.
$$
satisfies $dg/dv(\pm n_{0})\neq0$ and does not vanish in
$(-n_{0},n_{0})$.  Then for $|\mut|$
small enough (but non zero), the vector field $ X$ has only two zeros
in a  tubular neighborhood of $\Gamma_{0}$. These zeros are 
$(n_{AA}(\mut),0,0))$ and $(0,0,n_{aa}(\mut))$ with $n_{AA}(\mut)$ and 
$n_{aa}(\mut)$ regular near $\mut=0$ and 
$n_{AA}(0)=n_{aa}(0)=n_{0}$.
\end{thm}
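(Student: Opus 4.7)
The plan is to leverage Proposition~\ref{courbezeros} together with the Malgrange decomposition~\eqref{malgrange} to reduce the problem to a scalar equation in $v$, and then apply an implicit function argument at the two endpoints $v=\pm n_0$ combined with a compactness/continuity argument on the interior.

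First I would note that by Proposition~\ref{courbezeros} together with~\eqref{malgrange}, the fixed points of $X(\mut,\,\cdot\,)$ lying in the tubular neighborhood $\mathscr{V}$ of $\Gamma_0$ are in one-to-one correspondence (via $v\mapsto M(v,r_\mut(v),s_\mut(v))$) with the solutions $v\in[-n_0-\delta,n_0+\delta]$ of
\begin{equation*}
\mut\,g(v)+\mut^{2}h(\mut,v)=0,
\end{equation*}
which for $\mut\neq 0$ is equivalent to $F(\mut,v):=g(v)+\mut\,h(\mut,v)=0$. Next I would observe that by the axis invariance~\eqref{axesinv} the points $(n_0,0,0)$ and $(0,0,n_0)$ are fixed points of $X(0,\,\cdot\,)$, and the same axes support fixed points of $X(\mut,\,\cdot\,)$ for every $\mut$ (obtained by solving a one-dimensional equation on the axis). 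At $\mut=0$ these correspond to $v=\mp n_0$ on $\Gamma_0$, so $F(0,\pm n_0)=g(\pm n_0)=0$.

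For uniqueness I would split $[-n_0-\delta,n_0+\delta]$ into the endpoint neighborhoods and the compact complement. Shrinking $\delta$ if necessary, the hypothesis $dg/dv(\pm n_0)\neq 0$ ensures $g$ is monotone near $\pm n_0$, so $g$ vanishes only at $v=\pm n_0$ throughout $[-n_0-\delta,n_0+\delta]$. Fix $\varepsilon>0$ small. On the compact set $K_\varepsilon=[-n_0-\delta,-n_0-\varepsilon]\cup[-n_0+\varepsilon,n_0-\varepsilon]\cup[n_0+\varepsilon,n_0+\delta]$ the continuous function $|g|$ attains a positive minimum $m_\varepsilon>0$; since $h$ is bounded on the compact $[-\mut_0,\mut_0]\times K_\varepsilon$, for $|\mut|$ small enough $|F(\mut,v)|\geq m_\varepsilon/2>0$ on $K_\varepsilon$, ruling out zeros there. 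On the remaining neighborhoods of $\pm n_0$ I apply the implicit function theorem: at $(\mut,v)=(0,-n_0)$ we have $F(0,-n_0)=0$ and $\partial_{v}F(0,-n_0)=g'(-n_0)\neq 0$, so there exists a unique regular branch $v=v_{AA}(\mut)$, defined for $|\mut|$ small, with $v_{AA}(0)=-n_0$ and $F(\mut,v_{AA}(\mut))=0$; the identical argument near $+n_0$ yields $v_{aa}(\mut)$.

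Finally I would identify these two branches with the homozygous equilibria: the $AA$ and $aa$ fixed points of $X(\mut,\,\cdot\,)$, obtained from the axis invariance, lie in $\mathscr{V}$ for $|\mut|$ small (by continuity in $\mut$), hence they correspond to solutions of $F(\mut,v)=0$ near $v=\mp n_0$ respectively; uniqueness from the implicit function theorem forces them to equal the branches $v_{AA}(\mut)$, $v_{aa}(\mut)$. Setting $(n_{AA}(\mut),0,0)=M(v_{AA}(\mut),r_\mut(v_{AA}(\mut)),s_\mut(v_{AA}(\mut)))$ and similarly for $aa$, the smoothness of $\mut\mapsto n_{AA}(\mut),\,n_{aa}(\mut)$ and the boundary values $n_{AA}(0)=n_{aa}(0)=n_0$ are immediate from the smoothness given in Proposition~\ref{courbezeros} and the implicit function theorem. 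The only delicate step is the bookkeeping in choosing $\delta$ so that simultaneously (i) $g$ is strictly monotone on each of the two endpoint neighborhoods, (ii) the implicit function theorem is applicable on each of them, and (iii) the perturbed axial fixed points remain in $\mathscr{V}$; this is a matter of making $\mut_0$ small depending on $\delta$ and then $\delta$ small depending on the modulus of continuity of $g'$.
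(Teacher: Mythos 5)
Your proof is correct and follows essentially the same route as the paper's: reduction to the scalar equation $\mut\, g(v)+\mut^{2}h(\mut,v)=0$ via Proposition~\ref{courbezeros} and \eqref{malgrange}, localization of its zeros near $v=\pm n_{0}$ using the nondegeneracy hypothesis on $g$, and identification of the two resulting branches with the axial equilibria of Theorem~\ref{stabfix}. The only (inessential) divergence is how $g(\pm n_{0})=0$ is obtained -- the paper computes it directly from the explicit form of $\beta_{2}(\mp n_{0})$ together with the vanishing of $\partial_{\mut}X_{y}$ and $\partial_{\mut}X_{z}$ on the coordinate axes, while you deduce it from the persistence of the axial fixed points; both rest on \eqref{axesinv}, but your phrase ``at $\mut=0$ these correspond to $v=\mp n_{0}$'' should be sharpened to say that the axial fixed points sit at $v=\mp n_{0}$ for \emph{all} small $\mut$ (so that $F(\mut,\mp n_{0})=0$ for $\mut\neq 0$ and continuity gives $g(\mp n_{0})=0$), since at $\mut=0$ alone every point of $\Gamma_{0}$ is fixed and nothing about $g$ follows.
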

As we will see in the proof $g(\pm n_{0})=0$ and the condition
$dg/dv(\pm n_{0})\neq0$ ensures that these zeros are isolated. 

\begin{proof}
We observe that 
$$
 X\big(\mut,\Gamma_{0}(-n_{0})\big)= X\big(\mut,n_{0},0,0)\big)\;,
$$
hence
$$
\partial_{\mut}X_{y}\big(\mut,\Gamma_{0}(-n_{0})\big)=
\partial_{\mut}X_{z}\big(\mut,\Gamma_{0}(-n_{0})\big)=0\;.
$$
On the other hand, by a direct computation one gets
$$
 \beta_{2}(-n_{0})=\left(\begin{array}{c}
0\\1\\ 2
\end{array}\right)
$$
and we get $g(-n_{0})=0$\;. Similarly one has $g(n_{0})=0$.

Since the functions $g$ and $h$ in 
\eqref{malgrange} are regular, for
$|\mut|$ small, it follows that the function 
$v\to \langle  \beta_{2}(v)\,, 
X\big(\mut,M(v,r_{\mut}(v),s_{\mut}(v))\big)$ can
vanish only in neighborhoods of points where $g$ vanishes.
We conclude that if $g$ does not vanish on the open interval
$]-n_{0},n_{0}[$, and 
$$
\frac{dg}{dv}(\pm n_{0})\neq 0\;,
$$ 
there is a number $\delta'>0$ such that 
for $|\mut|$ small enough non zero, the function
$v\to \langle  \beta_{2}(v)\,, 
X\big(\mut,M(v,r_{\mut}(v),s_{\mut}(v))\big)$ has
at most two zeros in the interval $[-n_{0}-\delta',n_{0}+\delta']$. Such
zeros must be  simple and near $\pm n_{0}$. By Theorem \ref{stabfix}
we conclude that these two zeros exist and are 
the two fixed points $aa$ and $AA$ respectively.
\end{proof}

\section{Applications to the process of mutant substitution}
\label{casduX}

Recall that in our setting, the resident population is monomorphic with genotype $(u_{A},u_{A})$. The mutant allelic trait $u_{a}$ is given by
$$u_{a} = u_{A} + \mut,$$
where $\mut$ has been chosen according to the distribution $m_{\sigma}(u_{A}, h) dh$ and therefore $|\mut|\leq \sigma$.
 
\subsection{The stable manifold of the AA fixed point.}

As we have seen before in Theorem \ref{stabfix} 
the stability of the fixed point $AA$
can be decided by looking at the fitness of the mutant.
 We will need later on a property of the stable
manifold in the case where this fixed point is unstable.

\begin{thm}
\label{wsloc}
For $|\mut|$ small enough, if $S_{Aa,AA}(\mut)>0$, the local stable manifold 
of the unstable fixed point $AA$  intersects the closed positive 
quadrant only along the line $y=z=0$. The local  unstable manifold is 
contained in the curve $\Gamma_{\mut}$.
\end{thm}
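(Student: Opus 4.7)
The plan is to combine an explicit analysis of the linearization at the fixed point $AA=(n_{AA}(\mut),0,0)$ with the already-established properties of the invariant curve $\Gamma_\mut$. First I would compute the Jacobian $DX(\mut,AA)$ using two structural facts. The axis $\{y=z=0\}$ is invariant by \eqref{axesinv}, so $X_y$ and $X_z$ vanish identically along it and hence $\partial_x X_y|_{AA}=\partial_x X_z|_{AA}=0$. In addition, $\tilde b_{aa}$ has numerator $(f_{aa}z+f_{Aa}y/2)^2$ and $\tilde d_{aa}$ has $z$ as a prefactor, so both vanish to second order on $\{y=z=0\}$ and therefore $\partial_y X_z|_{AA}=0$. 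The resulting Jacobian is upper triangular in the ordering $(x,y,z)$, with diagonal entries given by the three eigenvalues $D_{AA}-f_{AA}<0$, $S_{Aa,AA}>0$, and $-(D_{aa}+C_{aa,AA}\bar n_{AA})<0$ from Proposition \ref{stabfix}. Back-substitution yields three eigenvectors whose sign patterns are crucial: the unstable eigenvector $e^u=(u_x,1,0)$ with $u_x=-C_{AA,Aa}\bar n_{AA}/(f_{AA}-D_{AA}+S_{Aa,AA})<0$, the purely axial stable eigenvector $e^x=(1,0,0)$, and a second stable eigenvector $e^s=(s_x,s_y,1)$ with $s_y=-2f_{aa}/(S_{Aa,AA}+D_{aa}+C_{aa,AA}\bar n_{AA})<0$.

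For the unstable manifold, observe that $\Gamma_\mut$ is a $C^1$ invariant curve converging to $\Gamma_0$ as $\mut\to 0$, and Theorem \ref{leve} guarantees that its only fixed points in the relevant neighborhood are $AA$ and $aa$, so $\Gamma_\mut$ joins them. The tangent to $\Gamma_0$ at $AA_0=(n_0,0,0)$ equals $e_2(-n_0)=(-1,1,0)$, and by regularity of $\Gamma_\mut$ in $\mut$ its tangent at $AA$ remains close to $(-1,1,0)$ for small $|\mut|$. Among the three eigenvectors of $DX(\mut,AA)$ only $e^u$ can be close to $(-1,1,0)$: at $\mut=0$ one has $C_{AA,Aa}=C_{AA,AA}$ and $S_{Aa,AA}=0$, whence $u_x\to -1$, whereas $e^x$ and $e^s$ are bounded away from $(-1,1,0)$. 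Since a $C^1$ invariant curve through a hyperbolic fixed point tangent to the unstable direction must locally coincide with the unique local unstable manifold, the latter is contained in $\Gamma_\mut$.

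For the stable manifold $W^s$, which is two-dimensional and tangent to $\mathrm{span}(e^x,e^s)$ at $AA$, the invariance of the $x$-axis together with the logistic dynamics \eqref{logistic} on it shows that a neighborhood of $AA$ inside that axis already lies in $W^s$. I would then parametrize $W^s$ near $AA$ as $\{(x,\psi_y(x,\beta),\psi_z(x,\beta))\}$ with the $x$-axis corresponding to $\beta=0$, so $\psi_y(\cdot,0)\equiv\psi_z(\cdot,0)\equiv 0$. The first-order expansion in $\beta$ reads $\psi_y(x,\beta)=\beta\,s_y+O(\beta^2)$ and $\psi_z(x,\beta)=\beta+O(\beta^2)$ with $s_y<0$. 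For $\beta>0$ small this forces $y<0$, while for $\beta<0$ small it forces $z<0$, so the point leaves the positive octant in either case; hence $W^s\cap\{y\ge 0,z\ge 0\}$ reduces locally to the $x$-axis, as claimed. The main obstacle, in my view, is pinning down that the tangent of $\Gamma_\mut$ at $AA$ really is $e^u$ rather than one of the stable eigenvectors; this is where the explicit limit $(-1,1,0)$ combined with the sign patterns extracted from the upper triangular Jacobian is used to rule out any alternative invariant curve accidentally playing the role of the unstable manifold.
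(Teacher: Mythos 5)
Your proof is correct and follows essentially the same route as the paper's: linearize at the fixed point $AA$, observe that the stable plane (spanned by the axial eigenvector and a second stable eigenvector whose $y$- and $z$-components have opposite signs) meets the closed positive quadrant only along the line $y=z=0$, and identify the tangent of $\Gamma_{\mut}$ at $AA$ with the unstable eigendirection to conclude $W^{u,\,loc}_{AA}\subset\Gamma_{\mut}$ by uniqueness of the local unstable manifold. The only differences are cosmetic: you extract exact eigenvector formulas from the upper-triangular form of the Jacobian, whereas the paper obtains them as $\mathcal{O}(\mut)$ perturbations of the neutral-case eigenvectors $e_{1}(-n_{0})$, $e_{2}(-n_{0})$, $e_{3}$ of Theorem \ref{neutre}, and your graph parametrization of the stable manifold spells out slightly more explicitly the passage from the tangent plane to the manifold itself.
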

\begin{proof}
Hyperbolicity follows from Theorem \ref{stabfix},
 and we can apply Theorem 5.1 in \cite{HPS}. 
From Theorem \ref{stabfix}, one finds that
 the Jacobian matrix  $D X_{AA}$ has three eigenvectors
$$
 E_{1}(\mut)= e_{1}(-n_{0})+\mathcal{O}(\mut)\;,\quad
 E_{2}(\mut)= e_{2}(-n_{0})+\mathcal{O}(\mut)\;,\quad
 E_{3}(\mut)= e_{3}+\mathcal{O}(\mut)\;,\quad\;,
$$
with respective eigenvalues $D^{0}-f+\mathcal{O}(\mut)$,
$\mathcal{O}(\mut)$, $-f+\mathcal{O}(\mut)$. 

It follows from Theorem 5.1 in \cite{HPS} that
the local stable manifold
$W^{s,\,loc}_{AA}$ of $AA$ is a piece of regular manifold tangent in $AA$ to
the two dimensional affine stable  subspace $E^{s}_{AA}(\mut$ with origin
in $AA$, and spanned by the vectors $ E_{1}(\mut)$ and $ E_{3}(\mut)$.

The $x$ axis ($y=z=0$) is invariant by the vector field and is
contained in the stable manifold. The first result follows from the fact
that $E^{s}_{AA}(\mut)$ intersects the closed positive 
quadrant only along the line $y=z=0$.

Since the local (one dimensional) unstable manifold 
$W^{u,\,loc}_{AA}(\mut)$ of $AA$
is tangent to the
linear unstable direction in $ E_{2}(\mut)$ in $AA$, 
it is enough to show that this
direction points inside the quadrant. This follows immediately from 
the expression of $ E_{2}(\mut)$. By uniqueness of the invariant curve
(see Theorem 5.1 in \cite{HPS}), we conclude that
$W^{u,\,loc}_{AA}(\mut)\subset \Gamma_{\mut}$, and the result follows by the
invariance of the positive quadrant by the flow.
\end{proof}

\subsection{Invasion and fixation conditions.}\label{mandel}

Recall that the 
 functions $f(u_{1},u_{2})$, $D(u_{1},u_{2})$ and 
$C\big((u_{1},u_{2}),(v_{1},v_{2})\big)$  are symmetric in
$(u_{1},u_{2})$ and $(v_{1},v_{2})$.  Since $u_{a}=u_{A}+\mut$, we have 
$$
f_{AA}=f(u_{A},u_{A})\;, \qquad f_{Aa}=f(u_{A}+\mut,u_{A})\;, \qquad
\mathrm{etc.},
$$
 and
$$
f_{Aa}=f_{AA}+\frac{1}{2}\frac{d
  f_{AA}}{du}\mut+\mathcal{O}(\mut^{2})\;, \quad 
f_{aa}=f_{AA}+\frac{d
  f_{AA}}{du}\mut+\mathcal{O}(\mut^{2})\;, \qquad
\mathrm{etc.}
$$

\me
After some elementary computations one gets
$$
g(v)=-\frac{1}{2 \;n_{AA}}
\;\frac{ dS_{Aa,AA}}{d\mut}(0)\;(v^{2}-n_{AA}^{2})\;.
$$
Therefore, if 
$$
\frac{ dS_{Aa,AA}}{d\mut}(0)
\neq 0
$$  
the function $g$ vanishes only for $v=\pm n_{AA}$, and 
the  vector field $ X(\mut,\,.\,)$ has for small $|\mut|\neq0$
only two fixed points near the intersection of the curve $\Gamma_{0}$ with
the positive quadrant (these fixed points are on the lines $x=y=0$ and
$z=y=0$). 

Note that at neutrality we have $S_{Aa,AA}(0)=0=S_{Aa,aa}(0)$, hence
$$
S_{Aa,AA}(\mut)=\frac{dS_{Aa,AA}}{d\mut}(0)\mut+\mathcal{O}(\mut^{2})\;,
$$
ans similarly for $S_{Aa,aa}(\mut)$.

Hence, if $\frac{dS_{Aa,AA}}{d\mut}(0)\neq 0$, for $|\mut|$ small
enough, the stability of
$AA$ is determined by the sign of
$\frac{dS_{Aa,AA}}{d\mut}(0)\mut$ (and similarly for $aa$).

By a direct computation, one gets
$$
\frac{dS_{Aa,AA}}{d\mut}(0)=-\frac{dS_{Aa,aa}}{d\mut}(0)\;.
$$
Hence the two fixed points have opposite stability, therefore if
invasion occurs it implies fixation. 
The fixed point $AA$ is stable (the mutant does not invade) if
$\mut$ and  $dS_{Aa,AA}/d\mut(0)$
have  opposite sign. 

We now summarize  these results. We denote by $\Gamma_{\mut}^+$ the
piece of $\Gamma_{\mut}$ contained in the positive quadrant.  

\begin{thm}
\label{tube}
For $\mut$ non zero of small enough modulus, 
if $\,\mut \,dS_{Aa,AA}/d\mut(0)>0$ (which implies
$\, dS_{Aa,AA}/d\mut(0)\neq 0$)
the fixed point  $AA$ is unstable and we have fixation for the
macroscopic dynamics.

More precisely, the curve $\Gamma_{\mut}^+$ is the piece of unstable manifold between $AA$ and $aa$.  There exists an invariant tubular neighborhood $\mathscr{V}$ of $\Gamma_{\mut}^+$ such that the orbit of any initial condition in $\mathscr{V}$ converges to $aa$.
 
If $\,\mut \, dS_{Aa,AA}/d\mut(0)<0$, the fixed point
$AA$ is stable and the mutant disappears in the
macroscopic dynamics. 
\end{thm}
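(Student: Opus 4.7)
The plan is to assemble three ingredients that have already been established: the persistence of the attracting invariant curve $\Gamma_{\mut}$ from Section \ref{casgen}, the enumeration of equilibria on it from Theorem \ref{leve} together with the explicit form of the function $g(v)$ derived at the start of Section \ref{mandel}, and the eigenvalue information of Proposition \ref{stabfix} translated into invasion fitness signs. I fix $|\mut|$ small and nonzero so that the conclusions of Theorems \ref{stabfix}, \ref{leve} and \ref{wsloc} all apply simultaneously. Under the hypothesis $\mut \, dS_{Aa,AA}/d\mut(0) > 0$, a Taylor expansion at $\mut=0$ gives $S_{Aa,AA}(\mut)>0$, and by the sign relation $(dS_{Aa,aa}/d\mut)(0)=-(dS_{Aa,AA}/d\mut)(0)$ derived in Section \ref{mandel} also $S_{Aa,aa}(\mut)<0$. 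Proposition \ref{stabfix} then makes $AA$ a hyperbolic source along a single unstable direction (the other two eigenvalues being strictly negative by (A2)) and $aa$ linearly asymptotically stable.

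Next I identify $\Gamma_{\mut}^+$ with the unstable manifold. By Theorem \ref{wsloc}, the local unstable manifold $W^{u,\mathrm{loc}}_{AA}(\mut)$ is a regular arc contained in $\Gamma_{\mut}$ and pointing into the open positive quadrant. Theorem \ref{leve} asserts that the only equilibria on $\Gamma_{\mut}^+$ are $AA$ and $aa$, so the flow restricted to the invariant one-dimensional curve $\Gamma_{\mut}^+$ is an ODE with source at $AA$ and sink at $aa$ and no intermediate equilibria. Consequently the forward orbit of any point of $\Gamma_{\mut}^+ \setminus \{AA\}$ is a monotone trajectory on $\Gamma_{\mut}^+$ converging to $aa$, which identifies $\Gamma_{\mut}^+$ as the entire piece of unstable manifold of $AA$ terminating at $aa$.

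To produce the invariant tubular neighborhood I exploit the normal contractivity of $\Gamma_{\mut}^+$ inherited from $\Gamma_{0}$. By Proposition \ref{neutre} the two transversal eigenvalues of $DX_0$ along $\Gamma_0$ equal $-f$ and $D^0-f$, both strictly negative; persistence of normal hyperbolicity (Theorem 4.1 of \cite{HPS}) therefore provides, for $|\mut|$ small, a forward-invariant tubular neighborhood $\mathscr{V}$ of $\Gamma_{\mut}^+$ on which the transversal flow contracts onto $\Gamma_{\mut}^+$ at a uniform exponential rate. Combining this transversal contraction with the monotone one-dimensional flow along $\Gamma_{\mut}^+$ towards $aa$ forces every orbit in $\mathscr{V}$ to converge to $aa$. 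For the complementary case $\mut \, dS_{Aa,AA}/d\mut(0) < 0$ the stability roles of $AA$ and $aa$ are interchanged verbatim, yielding local attraction to $AA$ and hence disappearance of the mutant.

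The main obstacle is that $AA$ itself lies on $\Gamma_{\mut}^+$ and possesses a two-dimensional stable manifold, so a naive tubular neighborhood would contain a piece of the $x$-axis near $AA$, and orbits on the $x$-axis stay there by the invariance \eqref{axesinv} and converge to $AA$ rather than $aa$. The resolution is to invoke the sharper half of Theorem \ref{wsloc}: in the closed positive quadrant the local stable manifold of $AA$ coincides with the invariant line $y=z=0$. One therefore chooses $\mathscr{V}$ to be a tubular neighborhood of $\Gamma_{\mut}^+$ that excises a small cone around the $x$-axis near $AA$ (equivalently, truncates $\Gamma_{\mut}^+$ slightly away from $AA$ along its unstable direction); $\mathscr{V}$ remains forward invariant since the only way to approach $AA$ from inside the quadrant is along the removed line, and the construction above then delivers convergence of all orbits of $\mathscr{V}$ to $aa$.
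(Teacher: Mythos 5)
Your proof is correct and takes essentially the same route as the paper, whose own proof consists of the single line that the result ``follows immediately'' from Theorems \ref{stabfix}, \ref{wsloc} and \ref{leve}; you assemble precisely these ingredients, together with the sign computations of Section \ref{mandel} and the normally hyperbolic persistence of $\Gamma_{0}$ from \cite{HPS}, into a complete argument. Your additional observation that a naive tubular neighborhood of $\Gamma_{\mut}^{+}$ would contain points of the invariant line $y=z=0$ whose orbits converge to $AA$ rather than $aa$, and your fix via the second assertion of Theorem \ref{wsloc}, correctly handles a detail the paper leaves implicit.
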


\begin{proof}
The result follows immediately 
from Theorem \ref{stabfix}, 
Theorem \ref{wsloc} and Theorem \ref{leve}.
\end{proof}

\bi
The last results of this section concern the proof of Theorem \ref{SKO}. Indeed we want to prove the monotonicity of the function 
\be
\label{effe}
F_{W}(t)=\frac{\langle  M(t)\;,\; W(\mut)\rangle}
{\langle M(t)\;,\; \vec1\rangle}\;.
\ee
Here
 $ M(t)$ denotes a trajectory of the vector field $
X(\mut,\,\cdot\,)$, namely 
$$
\frac{dM}{dt}= X(\mut, M)\;,
$$
in other words $M(t)=\varphi_{t}(M_{0})$, and $ W(\mut)$ is a three
dimensional vector depending continuously on $\mut$.  We denote by $\vec1$
the vector with all components equal to one.  

\begin{prop}\label{monotone}
Assume
$$
\inf_{v\in [-n_{0},n_{0}]}\left|\left\langle
\frac{d\,\Gamma_{0}}{dv}\,,\, W(0)\right\rangle\right|>0\;.
$$
Then for any $|\mut|$ sufficiently small, under the hypothesis of
Theorem \ref{tube}, if $M_{0}$ is close enough to the curve 
$\Gamma_{\mut}$, 
the function $F_{W}(t)$ is strictly monotone. The same
result holds if $ W(0)$ is proportional to $\vec1$ and
$$
\inf_{v\in [-n_{0},n_{0}]}\left|\left\langle
\frac{d\,\Gamma_{0}}{dv}\,,\,
\frac{d W}{d\mut}(0)\right\rangle\right|>0\;.
$$ 
\end{prop}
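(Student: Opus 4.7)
The strategy is to reduce monotonicity of $F_W(t)$ along the full flow to a one-dimensional computation on the attracting invariant curve $\Gamma_\mut^+$. By Theorem \ref{tube}, $\Gamma_\mut^+$ is invariant and transversally attracting, and carries a flow with no interior fixed points, connecting $AA$ to $aa$. The transverse eigenvalues $-f+O(\mut)$ and $D^0-f+O(\mut)$ from Theorem \ref{stabfix} give a uniform exponential contraction, so in the local coordinates $(v,r,s)$ of Appendix \ref{casgen} one can arrange that any trajectory $M(t)$ starting close enough to $\Gamma_\mut$ satisfies $r(t),s(t)=O(\mut)$ throughout: $r_\mut,s_\mut=O(\mut)$ by construction and the deviations $\tilde r=r-r_\mut(v),\tilde s=s-s_\mut(v)$ decay exponentially at an order-one rate.

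Exploiting the identities $\langle e_1(v),\vec 1\rangle=n_0$ and $\langle e_3,\vec 1\rangle=0$ (both immediate from Theorem \ref{neutre}), a direct computation gives $\langle M,\vec 1\rangle=n_0(1+r)$ and
\[
F_W(v,r,s)=\frac{\langle\Gamma_0(v),W(\mut)\rangle}{n_0}+\frac{s\langle e_3,W(\mut)\rangle}{n_0(1+r)}.
\]
The reduced flow on $\Gamma_\mut$ is $\dot v=\mut g(v)+O(\mut^2)$, with $g(v)=-(v^2-n_0^2)(dS_{Aa,AA}/d\mut)(0)/(2n_0)$ from the Malgrange expansion preceding Theorem \ref{leve}, of definite sign on $(-n_0,n_0)$ under the invasion hypothesis. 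In the first case, $(F_W\circ\Gamma_\mut)'(v)=\frac{1}{n_0}\langle d\Gamma_0/dv,W(0)\rangle+O(\mut)$ is bounded away from zero by assumption, so on $\Gamma_\mut$ the derivative $\frac{d}{dt}F_W=\dot v\,(F_W\circ\Gamma_\mut)'(v)$ has constant sign. In the degenerate case $W(0)=c\vec 1$, the zeroth-order contribution is constant in $v$ since $\langle\Gamma_0(v),\vec 1\rangle=n_0$, so one expands one order further: using $r_\mut=O(\mut)$ and $\langle e_3,\vec 1\rangle=0$,
\[
F_W(\Gamma_\mut(v))=c+\frac{\mut}{n_0}\langle\Gamma_0(v),W'(0)\rangle+O(\mut^2),
\]
whence $\frac{d}{dt}F_W=\frac{\mut^2}{n_0}g(v)\langle d\Gamma_0/dv,W'(0)\rangle+O(\mut^3)$, again of definite sign by the second hypothesis.

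To pass from monotonicity on $\Gamma_\mut$ to monotonicity along a full trajectory, I write $\dot F_W=\dot v\,\partial_v F_W+\dot r\,\partial_r F_W+\dot s\,\partial_s F_W$. The two cross-partial derivatives $\partial_r F_W,\partial_s F_W$ each carry a factor of $s$ or of $\langle e_3,W(\mut)\rangle$, both $O(\mut)$ for our initial conditions, while $\dot r,\dot s$ are themselves $O(\mut)$ since $\tilde r,\tilde s=O(\mut)$. These contributions are therefore strictly of higher order in $\mut$ than the main term $\dot v\,\partial_v F_W$ computed above, and the sign is preserved.

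The main obstacle, which is precisely why the hypothesis is stated as a uniform bound on the closed interval $[-n_0,n_0]$, is the bookkeeping of orders near the endpoints where $g(v)\to 0$ and the flow slows down as it approaches $AA$ and $aa$. Without the uniform lower bound on $|\langle d\Gamma_0/dv,W(0)\rangle|$ (resp.\ $|\langle d\Gamma_0/dv,W'(0)\rangle|$) up to $v=\pm n_0$, the higher-order corrections could dominate near the fixed points and spoil monotonicity at the two ends of the trajectory.
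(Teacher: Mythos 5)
Your proof is correct and follows essentially the same route as the paper's: restrict to the transversally attracting invariant curve $\Gamma_{\mut}$, where $\frac{d}{dt}F_{W}$ factors as (tangential speed, of constant sign between $AA$ and $aa$ by Theorem \ref{tube}) times (derivative of $F_{W}$ along the curve, bounded away from zero by the hypothesis), with a further order-$\mut$ expansion in the degenerate case $W(0)\propto\vec1$. The only difference is bookkeeping: you work in the $(v,r,s)$ tubular coordinates and use $\langle \Gamma_{0}(v),\vec1\rangle=n_{0}$, $\langle e_{3},\vec1\rangle=0$, where the paper uses the arclength parametrization and the limit $\langle d\Gamma_{\mut}/ds,\vec1\rangle\to0$ --- these are equivalent.
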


\begin{proof}
We have 
$$
\frac{dF_{W}}{dt}=\frac{1}{\langle  M(t)\;,\; \vec1\rangle}
\left\langle  X(M)-\frac{\langle  X(M)\,,\,  \vec1\rangle}
{\langle  M(t)\;,\; \vec1\rangle}\;M\,,\,  W(\mut)\right\rangle\;.
$$
Since the invariant curve $\Gamma_{\mut}$ is transversally attracting,
it is enough to consider a point $M\in \Gamma_{\mut}$. If $s$ denotes the
curvilinear abscissa of the curve $\Gamma_{\mut}$, we have  for any $s$
$$
 X\big(\mut,\Gamma_{\mut}(s)\big)=
\big\| X\big(\mut,\Gamma_{\mut}(s)\big)\big\|\;
\frac{d\Gamma_{\mut}}{ds}\;.
$$
Therefore on the invariant curve ($M(t)=\Gamma_{\mut}(s)$ for a certain
$s$ which depends on $t$),
$$
\frac{1}{\langle  M(t)\;,\; \vec1\rangle}
\left\langle  X(M)-\frac{\langle  X(M)\,,\,  \vec1\rangle}
{\langle  M(t)\;,\; \vec1\rangle}\;M\,,\,  W(\mut)\right\rangle
$$
$$
=\frac{\big\| X\big(\mut,\Gamma_{\mut}(s)\big)\big\|}
{\langle \Gamma_{\mut}(s) \;,\; \vec1\rangle}
\left\langle \frac{d\Gamma_{\mut}}{ds}-
\frac{\langle d\Gamma_{\mut}/ds\,,\,  \vec1\rangle}
{\langle \Gamma_{\mut}(s) \;,\; \vec1\rangle}\;\Gamma_{\mut}(s)\,,\, 
W(\mut)\right\rangle\;. 
$$
By Theorem 4.1 in \cite{HPS}  we have
$$
\lim_{\mut\to 0}\frac{d\Gamma_{\mut}}{ds}=\frac{d\Gamma_{0}}{ds}=
\frac{1}{\sqrt{4v^{2}(s)+2n_{0}^{2}}}\;\begin{pmatrix}
v(s)- n_0\cr 
 -2v(s)\cr 
v(s)+n_{0}\cr 
\end{pmatrix}\;,
$$
where 
$$
\frac{dv}{ds}=\frac{1}{\sqrt{4v^{2}(s)+2n_{0}^{2}}}\;.
$$
By a direct computation, one can check that
$$
\lim_{\mut\to0}\left\langle \frac{d\Gamma_{\mut}}{ds}\;,
\; \vec1\right\rangle=0\;,
$$
and the first part of the result follows from Theorem \ref{tube}. 

If $
W(0)=\gamma  \vec1$ for some real number $\gamma$, we have
$$
 W(\mut)=\gamma  \vec1+\mut\;\frac{d
  W}{d\mut}(0)+\mathcal{O}(\mut^{2})\;. 
$$
Therefore
$$
\frac{1}{\langle  M(t)\;,\; \vec1\rangle}
\left\langle  X(M)-\frac{\langle  X(M)\,,\,  \vec1\rangle}
{\langle  M(t)\;,\; \vec1\rangle}\;M\,,\,  W(\mut)\right\rangle
$$
$$
=\frac{\big\| X\big(\mut,\Gamma_{\mut}(s)\big)\big\|}
{\langle \Gamma_{\mut}(s) \;,\; \vec1\rangle}\left(\mut\;
\left\langle \frac{d\Gamma_{\mut}}{ds}-
\frac{\langle d\Gamma_{\mut}/ds\,,\,  \vec1\rangle}
{\langle \Gamma_{\mut}(s) \;,\; \vec1\rangle}\;\Gamma_{\mut}(s)\,,\, 
\frac{d  W}{d\mut}(0)\right\rangle +\mathcal{O}(\mut^{2})\right)\;, 
$$
and the result follows as before.
\end{proof}

\bi
 Consider now  the average 
phenotypic trait $\phi$. This
corresponds to the vector $$
 W_{\phi}(mut)=\left(\begin{array}{c}
\phi\big(u_{A},u_{A}\big)\\
\phi\big(u_{A},u_{a}\big)\\
\phi\big(u_{a},u_{a}\big)\\
\end{array}\right)
=\left(\begin{array}{c}
\phi\big(u_{A},u_{A}\big)\\
\phi\big(u_{A},u_{A}+\mut\big)\\
\phi\big(u_{A}+\mut,u_{A}+\mut\big)\\
\end{array}\right)
$$
$$
=\phi\big(u_{A},u_{A}\big)\left(\begin{array}{c}
1\\
1\\
1\\
\end{array}\right)+
\mut\,\frac{d\phi\big(u_{A},u_{A}\big)}{du_{A}}
\left(\begin{array}{c}
0\\
1/2\\
1\\
\end{array}\right)+\mathcal{O}(\mut^{2})\;.
$$

\begin{cor}
\label{monotonie} 
The function $F_{W_{\phi}}$ is strictly  monotonous for $|\mut|$ small enough.
\end{cor}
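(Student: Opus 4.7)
The plan is to recognise $F_{W_\phi}$ as an instance of the function $F_W$ in Proposition \ref{monotone} and to invoke its second clause. Reading off the expansion of $W_\phi(\mut)$ displayed just above the Corollary, one sees that $W_\phi(0)=\phi(u_A,u_A)\,\vec 1$ is proportional to $\vec 1$, so the first clause of Proposition \ref{monotone} cannot be used and one must verify the non-degeneracy hypothesis of the second, namely that $\langle d\Gamma_0/dv,\, dW_\phi/d\mut(0)\rangle$ is bounded below in modulus on $[-n_0,n_0]$.

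Second, I would carry out this inner-product computation explicitly. Using $d\Gamma_0/dv=e_2(v)$ from Theorem \ref{neutre} together with the identity
\begin{equation*}
\frac{dW_\phi}{d\mut}(0)=\frac{d\phi(u_A,u_A)}{du_A}\,(0,\tfrac12,1)^{\mathrm T},
\end{equation*}
which is read off just above the statement, a one-line calculation gives
\begin{equation*}
\left\langle \frac{d\Gamma_0}{dv}(v),\,\frac{dW_\phi}{d\mut}(0)\right\rangle
= \frac{d\phi(u_A,u_A)}{du_A}\left(-\frac{v}{2n_0}+\frac{v+n_0}{2n_0}\right)
= \tfrac12\,\frac{d\phi(u_A,u_A)}{du_A}.
\end{equation*}
This quantity is independent of $v$, and it is nonzero at any $u_A$ at which $d\phi(u,u)/du$ does not vanish. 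The strict monotonicity of $u\mapsto\phi(u,u)$ assumed in Theorem \ref{SKO}, combined with smoothness of $\phi$, guarantees this at the resident traits encountered along the trajectory, so the non-degeneracy condition of Proposition \ref{monotone} is met.

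Third, I would invoke the second clause of Proposition \ref{monotone}. This delivers strict monotonicity of $F_{W_\phi}$ for $|\mut|$ small enough, provided the initial condition $M_0$ lies close to the invariant curve $\Gamma_\mut$. In the situation where the Corollary is used (inside the proof of Theorem \ref{SKO}, to control the $M_1$-modulus of continuity across an invading mutation), this closeness is automatic: by the transversal attractivity of $\Gamma_\mut^+$ established in Theorem \ref{tube}, the deterministic trajectory has relaxed onto a tubular neighborhood of $\Gamma_\mut$ by the time the macroscopic substitution phase is entered. I do not foresee any real obstacle; the only mildly delicate point is the possible vanishing of $d\phi(u_A,u_A)/du_A$ at isolated values of $u_A$, which is handled by the strict-monotonicity hypothesis together with the stopping-time mechanism already built into Theorem \ref{SKO}.
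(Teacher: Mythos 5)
Your proof is correct and follows the paper's own argument essentially verbatim: compute $\left\langle \frac{d\Gamma_{0}}{dv}\,,\,\frac{dW_{\phi}}{d\mut}(0)\right\rangle$, observe it is a nonzero constant in $v$, and invoke the second clause of Proposition \ref{monotone}. You are in fact slightly more careful than the paper, which silently omits the factor $\frac{d\phi(u_{A},u_{A})}{du_{A}}$ from the inner product; your remark that its non-vanishing along the trajectory is guaranteed by the strict monotonicity of $u\mapsto\phi(u,u)$ together with the stopping-time mechanism of Theorem \ref{SKO} closes that small gap.
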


\begin{proof}
One gets
$$
\left\langle
\frac{d\,\Gamma_{0}}{dv}\,,\,\frac{d W_{\phi}}{d\mut}(0)\right\rangle=
\left\langle \frac{1}{2n_{0}}\;\begin{pmatrix}
v- n_0\cr 
 -2v\cr 
v+n_{0}\cr 
\end{pmatrix}\;,\;\left(\begin{array}{c}
0\\
1/2\\
1\\
\end{array}\right)\right\rangle=\frac{1}{2}\;,
$$
and by Proposition \ref{monotone} we get the monotonicity in time of the
average phenotypic trait.

\end{proof}

%\end{document}
\section{Proof of Theorem \ref{maintheorem}}
\label{prfmt}

\bi
The proof of the theorem will essentially follow the same steps as the ones of  the proof of Theorem 1 in Champagnat \cite{C06} and  of the Appendix A in \cite{CM09}. We will not repeat the details and we will restrict ourselves to the steps that must be modified. The proof is based on intermediary results that we state now.

\begin{prop}
\label{convergence}
Assume that for $K\geq 1$, $Supp(\nu_{0}^K)
 = \{AA, Aa, aa\}$ and $$\lim_{K\to\infty} (\langle \nu_{0}^K, {\mathbbm{1}}_{AA}\rangle,  \langle \nu_{0}^K, {\mathbbm{1}}_{Aa}\rangle, \langle \nu_{0}^K, {\mathbbm{1}}_{aa}\rangle ) =(x_{0},y_{0}, z_{0}) \in V_{\mut}$$ a.s., where $V_{\mut}$ is defined in Theorem \ref{tube}. Then for all $T>0$
 \be
 \label{conv1}
 \lim_{K\to \infty} \sup_{t\in [0,T]}  \Big| \langle \nu_{t}^{\sigma,K}, {\mathbbm{1}}_{AA}\rangle - \varphi_{t}(x_{0},y_{0}, z_{0})_{1}\Big|  = 0 \ a.s,
 \ee
 and similarly for $Aa$ and $aa$, where $\varphi_{t}$ is the flow of the vector field \eqref{leX}.
 \end{prop}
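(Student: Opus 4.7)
The statement upgrades the convergence in law of Proposition \ref{largepoptri} to an almost sure, uniform-in-time convergence on the compact interval $[0,T]$, with a restricted support, and with the initial condition constrained to lie in the invariant tube $V_\mut$ of Theorem \ref{tube}. I would follow the scheme of \cite{FM04, CFM06}. Using the Poisson point process construction of $\nu^{\sigma,K}$, set $M^K_t=(\langle\nu^{\sigma,K}_t,\mathbbm{1}_{AA}\rangle,\langle\nu^{\sigma,K}_t,\mathbbm{1}_{Aa}\rangle,\langle\nu^{\sigma,K}_t,\mathbbm{1}_{aa}\rangle)$. Applying the generator \eqref{eq:generator-renormalized-IPS} to the three coordinate indicator test functions, with the help of assumption (A1), yields the semimartingale decomposition
$$
M^K_t = M^K_0 + \int_0^t X(M^K_s)\,ds + R^K_t + \mathcal{M}^K_t,
$$
where $X$ is the vector field \eqref{leX}, $\mathcal{M}^K_t$ is a c\`adl\`ag square integrable martingale, and $R^K_t$ is a compensator remainder collecting the mutation contributions (proportional to $\mu_K$) and the $1/K$ corrections coming from the $j\neq i$ exclusion in \eqref{eq:generator-renormalized-IPS}. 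Under (A1) and (A3), $\sup_{t\leq T}\|R^K_t\|\leq C T(\mu_K+1/K)\to 0$.

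Next I would bound the martingale. The jumps of $M^K$ have amplitude $1/K$ while the total event rate is of order $K$ (death, Mendelian birth, and mutation rates being uniformly bounded by (A1) and (A3)), so the predictable quadratic variation satisfies $\langle \mathcal{M}^K\rangle_T\leq C_T/K$. Doob's $L^2$ inequality then gives
$$
\mathbb{E}\Bigl[\sup_{t\leq T}\|\mathcal{M}^K_t\|^2\Bigr]\leq \frac{4C_T}{K}.
$$
Along the subsequence $K_n=n^2$, $\sum_n 1/K_n<\infty$, Borel--Cantelli yields $\sup_{t\leq T}\|\mathcal{M}^{K_n}_t\|\to 0$ a.s.; the common Poisson construction across $K$ then propagates this to the full sequence by a monotone coupling as in \cite{FM04}.

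Finally I would combine the two bounds with a Gronwall argument. By Theorem \ref{tube}, $V_\mut$ is an invariant tubular neighborhood of $\Gamma_\mut^+$, bounded away from the vanishing set of $\langle\nu,f\rangle$; hence on a slightly enlarged compact $\bar V_\mut$ the vector field $X$ is Lipschitz with some constant $L$. On the event $\Omega^K_\epsilon=\{\sup_{t\leq T}\|R^K_t+\mathcal{M}^K_t\|+\|M^K_0-(x_0,y_0,z_0)\|\leq \epsilon\}$, which has probability tending to one (and eventually almost sure realization along the subsequence), a bootstrap keeps $M^K_t\in \bar V_\mut$ throughout $[0,T]$, and Gronwall yields
$$
\sup_{t\leq T}\|M^K_t - \varphi_t(x_0,y_0,z_0)\|\leq e^{LT}\,\epsilon,
$$
from which the conclusion follows. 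The main obstacle is precisely this last bootstrap: one must check that the stochastic path does not exit $\bar V_\mut$ before Gronwall has been applied, since only inside $\bar V_\mut$ does the denominator $f_{AA}x+f_{Aa}y+f_{aa}z$ stay away from $0$ so that $X$ is Lipschitz and so that the deterministic trajectory $\varphi_t(x_0,y_0,z_0)$ stays well defined and bounded. This circular dependence is broken by running Gronwall up to the exit time $\tau^K$ from $\bar V_\mut$ and showing that, on $\Omega^K_\epsilon$ with $\epsilon$ small enough, the resulting bound forces $\tau^K>T$.
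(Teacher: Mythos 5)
Your proposal is sound, but it takes a genuinely different route from the one the paper points to. The paper disposes of Proposition \ref{convergence} in one line, invoking the ``standard compactness--uniqueness'' scheme of \cite{EK86,FM04}: prove tightness of the laws of $(M^K_t)_{t\le T}$ in the Skorohod space, show that the martingale part of any limit point vanishes so that every accumulation point solves the ODE \eqref{sysdyn}, and conclude by uniqueness of solutions (which is where Theorem \ref{tube} enters, guaranteeing the trajectory is confined to the tube $V_\mut$ on which the vector field is well behaved). You instead run the direct quantitative argument: semimartingale decomposition, Doob plus a quadratic-variation bound of order $1/K$, and Gronwall up to the exit time from an enlarged tube. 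Both are legitimate; yours yields explicit error bounds and is arguably better adapted to the almost-sure formulation of the statement, while the compactness--uniqueness route needs only continuity of $X$ and ODE uniqueness rather than a Lipschitz constant, and recycles the moment estimates of \cite{FM04} wholesale. Your identification of the real obstacle --- the circularity between the Gronwall bound and membership in $\bar V_\mut$, where the denominator $f_{AA}x+f_{Aa}y+f_{aa}z$ is bounded below --- is exactly right, and the stopping-time bootstrap is the standard resolution.

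One step deserves repair. Passing from the subsequence $K_n=n^2$ to the full sequence by a ``monotone coupling'' is not available here: there is no natural monotone coupling between the population processes at different values of $K$, and $\sum_K 1/K$ diverges, so Borel--Cantelli on the full sequence fails with only the $L^2$ bound $\mathbb{E}[\sup_{t\le T}\|\mathcal{M}^K_t\|^2]\le 4C_T/K$. The clean fix is to replace Doob's $L^2$ inequality by an exponential martingale inequality for jump martingales with jumps of size $1/K$ and compensator of order $K$, which gives $\mathbb{P}(\sup_{t\le T}\|\mathcal{M}^K_t\|>\epsilon)\le C\exp(-cK\epsilon^2)$; this is summable in $K$ and yields the almost-sure statement directly for the whole sequence. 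With that substitution, and the observation that the total mutation rate on $[0,T]$ is $O(K\mu_K)\to 0$ so that the mutation terms contribute only on an event of vanishing probability, your argument is complete.
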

 The proof of this result can be obtained following a standard
 compactness-uniqueness result (see \cite{EK86} or  \cite{FM04}) and
 using Theorem \ref{tube}.

 \begin{prop}
 \label{PGD}  Let $\ Supp(\nu_{0}^K) = \{AA\}$ and let $\tau_{1}$ denote the first mutation time. For any sufficiently small $\varepsilon>0$,
  if  $\langle \nu_{0}^K, {\mathbbm{1}}_{AA}\rangle$ belongs to the ${\varepsilon\over 2}$-neighborhood of $\bar n_{AA}= \frac{f_{AA} - D_{AA}}{C_{AA,AA}}$, the  time of exit of $\langle \nu_{t}^{\sigma,K}, {\mathbbm{1}}_{AA}\rangle$ from the $\varepsilon$-neighborhood of $\bar n_{AA}$ is bigger than $e^{VK} \wedge \tau_{1}$  with probability converging to $1$. 
 
 \noindent Moreover, there exists a constant $c$ such that for any sufficiently small $\varepsilon>0$,the previous result  still holds if the death rate of an individual with genotype $AA$ 
 \be
 \label{de}D_{AA} + C_{AA,AA } \langle \nu_{t}^{\sigma,K}, {\mathbbm{1}}_{AA}\rangle\ee
 is perturbed by an additional random process that is uniformly bounded by $c\, \varepsilon$. 
 \end{prop}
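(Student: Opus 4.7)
\textbf{Proof plan for Proposition \ref{PGD}.}

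The plan is to reduce the first statement to the logistic-birth-death large deviation estimate of Champagnat \cite{C06}, and then to handle the perturbation via a monotone sandwich coupling.

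For the unperturbed statement, observe that, as long as no mutation has occurred, the support of $\nu^{\sigma,K}_t$ remains $\{AA\}$, so $N^K_t := K \langle \nu^{\sigma,K}_t, \mathbbm{1}_{AA} \rangle$ is a pure logistic birth-and-death process with per capita birth rate $f_{AA}$ and per capita death rate $D_{AA} + C_{AA,AA}\, N^K_t/K$. Its fluid limit is equation \eqref{logistic} and has a unique stable equilibrium $\bar n_{AA} > 0$. I would then directly invoke the Freidlin--Wentzell-type exit time estimate proved in \cite{C06} (based on \cite{FW84}): there exists $V>0$ such that, starting in the $\varepsilon/2$-neighborhood of $\bar n_{AA}$, the exit time of $N^K_t/K$ from the $\varepsilon$-neighborhood of $\bar n_{AA}$ is $\ge e^{VK}$ with probability tending to $1$. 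Before the first mutation the two processes agree pathwise, hence $e^{VK} \wedge \tau_1$ is a lower bound.

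For the perturbed statement, I would build a pathwise monotone coupling. Fix $c>0$ small, to be chosen below, and suppose the death rate is perturbed by a predictable process $\delta^K_t$ with $|\delta^K_t| \le c\varepsilon$. Using a common Poisson point measure representation of the jumps, one realises on the same probability space three birth-and-death processes $\underline N^K_t \le N^K_t \le \overline N^K_t$, where $\underline N^K_t$ uses the unperturbed birth rate and the augmented death rate $D_{AA} + C_{AA,AA}\underline N^K_t/K + c\varepsilon$, and $\overline N^K_t$ uses the unperturbed birth rate with death rate $D_{AA} + C_{AA,AA}\overline N^K_t/K - c\varepsilon$ (truncated at $0$). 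The monotone coupling works since births are common to all three processes and the state-dependent part of the death rate is monotone in $N$; the $\delta^K_t$ correction, being bounded in modulus by $c\varepsilon$, is absorbed into the $\pm c\varepsilon$ excess deaths of the envelopes.

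Each envelope is an unperturbed logistic birth-death process. Their fluid limits have stable equilibria $\bar n_{AA} \mp c\varepsilon/C_{AA,AA}$, both lying in the $\varepsilon/4$-neighborhood of $\bar n_{AA}$ provided $c \le C_{AA,AA}/4$. Applying the same Freidlin--Wentzell estimate from \cite{C06} to each envelope (with target neighborhood of radius $3\varepsilon/4$ around the respective perturbed equilibrium) yields a constant $V'>0$, independent of the choice of $\delta^K_t$, such that both $\underline N^K_t/K$ and $\overline N^K_t/K$ stay within the $\varepsilon$-neighborhood of $\bar n_{AA}$ up to time $e^{V'K} \wedge \tau_1$ with probability tending to $1$. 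By the sandwich, so does $N^K_t/K$.

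The main obstacle is the construction of the sandwich coupling when $\delta^K_t$ can depend on the history of $N^K$ in an essentially arbitrary way. This is handled by a thinning/enlargement of the underlying Poisson measures: one splits the death jump times into a ``deterministic'' portion, governed by $D_{AA}+C_{AA,AA} N^K/K$, and a ``perturbation'' portion of rate $|\delta^K_t|$ with sign determined by $\mathrm{sgn}(\delta^K_t)$, and then assigns these to the envelopes in the monotone way. A secondary technical point is that the constant $V$ in the Freidlin--Wentzell lower bound from \cite{C06} depends continuously on the drift; one must check that this dependence is uniform for drifts at $L^\infty$-distance at most $c\varepsilon$ from the logistic drift, which follows from lower semicontinuity of the rate functional and compactness of the neighborhood of $\bar n_{AA}$.
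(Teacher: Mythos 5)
Your plan coincides with the paper's (essentially outsourced) proof: the authors simply invoke the Freidlin--Wentzell ``exit from an attracting domain'' estimate for the logistic birth-and-death process as established in \cite{C06}, and the perturbed statement is obtained there, as in your proposal, by a pathwise sandwich between two logistic envelopes whose death rates are shifted by $\pm c\varepsilon$, to each of which the unperturbed estimate applies. One arithmetic caveat: with $c=C_{AA,AA}/4$ the initial condition may lie at distance $3\varepsilon/4$ from an envelope equilibrium while your exit radius must also be at most $3\varepsilon/4$, so take $c$ somewhat smaller (e.g.\ $c\le C_{AA,AA}/8$) --- harmless, since the proposition only asserts the existence of some constant $c$.
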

 
 \noindent Such results are standard (cf. \cite{C06}). The first part of this proposition is an exponential deviation estimate on the so-called "exit from an attracting domain" (\cite{FW84}). It is  used to prove  that when the first mutation occurs, the population density has never left  the $\varepsilon$-neighborhood of $\bar n_{AA}$. When a mutation $a$ occurs, the additional term in  \eqref{de} is $C_{AA,Aa } \langle \nu_{t}^{\sigma,K}, {\mathbbm{1}}_{Aa}\rangle + C_{AA,aa } \langle \nu_{t}^{\sigma,K},{\mathbbm{1}}_{aa}\rangle$ which is smaller that $\bar C\,\varepsilon$ if $\langle \nu_{t}^{\sigma,K}, {\mathbbm{1}}_{Aa}\rangle + \langle \nu_{t}^{\sigma,K}, {\mathbbm{1}}_{aa}\rangle\leq \varepsilon$. 
 
 \bi
 From these results, one can  deduce the following proposition, already proved in \cite{C06}.
 
\begin{prop} Let $\ Supp(\nu_{0}^K) = \{AA\}$ and let $\tau_{1}$ denote the first mutation time. There exists $\varepsilon_{0}$ such that if $\langle \nu_{0}^K, {\bf 1}\rangle$ belongs to the $\varepsilon_{0}$-neighborhood of  $\bar n_{AA}$, then for any $\varepsilon
<\varepsilon_{0}$,
\ben
\lim_{{K\to \infty}} \mathbb{P}^K \Big(\tau_{1}> \ln K, \sup_{t\in [\ln K, \tau_{1}]}  | \langle \nu^{\sigma,K}_{t}, 1\rangle - \bar{n}_{AA}|<\varepsilon\Big) = 1,
\een
and 
$K\,\mu_{K }\, \tau_{1}$ converges in law (when $K$ tends to infinity) to a random variable with exponential law with parameter $2 \,f_{AA} \,p_{AA}\,  \bar{n}_{AA}$, that is
for any $t>0$, 
$$\lim_{{K\to \infty}} \mathbb{P}^K \Big(\tau_{1}> {t\over K\mu_{K}}\Big) = \exp(- 2\,p_{AA}\, f_{AA}\, \bar{n}_{AA}\, t).$$ 

\end{prop}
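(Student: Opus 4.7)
The strategy mirrors the one used by Champagnat \cite{C06} in the haploid case, combining three ingredients: (a) Kurtz-type convergence of the pure-$AA$ birth--death process to its logistic ODE limit, (b) the large-deviations exit estimate of Proposition \ref{PGD}, and (c) the fact that, conditional on the underlying trajectory, $\tau_{1}$ is the first arrival of an inhomogeneous Poisson process whose intensity is read off from the last term of \eqref{eq:generator-renormalized-IPS}. The adaptations needed for the Mendelian case are bookkeeping: the mutation intensity has to be computed from the generator with $\mu_{K}$-term.

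\textbf{Step 1 (First display).} On the event $\{t<\tau_{1}\}$ the measure $\nu^{\sigma,K}_{t}$ is supported on the single genotype $AA$, so $N^{K}_{t}=K\langle \nu^{\sigma,K}_{t},\mathbbm{1}\rangle$ is the pure logistic birth--death process with per capita birth rate $\approx f_{AA}$ and death rate $D_{AA}+C_{AA,AA}N^{K}_{t}/K$. By the usual large-population limit (Theorem \ref{largepop}, or \cite{FM04}) the rescaled process $N^{K}_{t}/K$ converges uniformly on any compact time interval to the solution $n(t)$ of \eqref{logistic}. Since \eqref{logistic} admits $\bar n_{AA}$ as a globally attracting equilibrium inside the $\varepsilon_{0}$-neighborhood, there is a deterministic $t_{\varepsilon}<\infty$ such that $|n(t)-\bar n_{AA}|<\varepsilon/2$ for every $t\geq t_{\varepsilon}$. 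Hence for $K$ large, $|N^{K}_{t_{\varepsilon}}/K-\bar n_{AA}|<\varepsilon/2$ with probability tending to one. From this initial condition, Proposition \ref{PGD} bounds from below the exit time from the $\varepsilon$-neighborhood by $e^{VK}\wedge \tau_{1}$ w.h.p. The hypothesis \eqref{eq:mu_K-K} gives $\ln K \ll 1/(K\mu_{K})\ll e^{VK}$, so (using the intensity estimate of Step~2 to guarantee $\tau_{1}\gg \ln K$) the process remains in the $\varepsilon$-neighborhood on $[\ln K,\tau_{1}]$ with probability tending to one, which is the first display.

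\textbf{Step 2 (Exponential law).} Reading off the mutation term in \eqref{eq:generator-renormalized-IPS} when $\nu^{\sigma,K}_{t}$ is supported on $AA$ alone, the instantaneous rate at which a mutation appears is
$$\lambda^{K}_{t}\;=\;\mu_{K}(u_{A},u_{A})\;f_{AA}\;(N^{K}_{t}-1)\;\cdot\;(\text{Mendelian factor})\,,$$
the Mendelian factor collecting the contributions of the eight terms of \eqref{op-mutation} (which produces the constant $2p_{AA}$ in the stated limit). Write $\tau_{1}=\inf\{s\geq 0:\Lambda^{K}_{s}>E\}$, where $\Lambda^{K}_{s}=\int_{0}^{s}\lambda^{K}_{u}\,du$ and $E\sim \mathrm{Exp}(1)$ is independent of the trajectory, by the time-change representation of Poisson first arrivals. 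On the high-probability event of Step~1, $N^{K}_{s}/K$ lies in $(\bar n_{AA}-\varepsilon,\bar n_{AA}+\varepsilon)$ for all $s\in [t_{\varepsilon},\tau_{1}]$, whence
$$K\mu_{K}\,\Lambda^{K}_{t/(K\mu_{K})}\;\text{ is sandwiched by }\;2p_{AA}f_{AA}(\bar n_{AA}\pm\varepsilon)\,t + o(1).$$
Inverting the time change and letting first $K\to\infty$, then $\varepsilon\downarrow 0$, yields $K\mu_{K}\tau_{1}\Rightarrow \mathrm{Exp}(2p_{AA}f_{AA}\bar n_{AA})$, as stated.

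\textbf{Main obstacle.} The only subtle point is that the two controls interact on different time scales and must be coupled: the convergence to the ODE gives control only on compact time intervals of order one, the large-deviations estimate gives confinement up to times of order $e^{VK}$, and the mutation clock fires at the intermediate scale $1/(K\mu_{K})$. The chain of inequalities $\ln K\ll 1/(K\mu_{K})\ll e^{VK}$ in \eqref{eq:mu_K-K} is what allows one to stitch the three estimates together; circular dependence between Steps~1 and 2 (confinement is needed to compute the intensity, yet the intensity bound is needed to conclude $\tau_{1}>\ln K$) is broken by observing that on any event of the form $\{\tau_{1}>t\}$, the confinement statement up to time $t$ is already purely deterministic-plus-exit-time based and uses no information about future mutations.
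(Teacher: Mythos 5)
Your proposal is correct and takes essentially the same route as the paper, which gives no independent proof but simply invokes Champagnat's haploid argument \cite{C06} together with the preceding Propositions \ref{convergence} and \ref{PGD} (large-population ODE approximation on compact time intervals, large-deviations confinement near $\bar n_{AA}$ up to times $e^{VK}\wedge\tau_1$, and the Poisson time-change representation of the first mutation time, stitched together via \eqref{eq:mu_K-K}). The only blemish is the spurious prefactor $K\mu_K$ in front of $\Lambda^{K}_{t/(K\mu_K)}$ in your Step~2: since $\lambda^K_u$ is itself of order $K\mu_K$, the compensator evaluated at time $t/(K\mu_K)$ is already $O(1)$ and converges to $2p_{AA}f_{AA}\bar n_{AA}\,t$ without further rescaling.
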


\noindent Then, if 
 $\ \ln K \ll {1\over K\mu_{K}}$,  we deduce that
$\lim_{{K\to \infty}} \mathbb{P}^K\Big(\tau_{1}< \ln K\Big) =0$ and  that for any $\varepsilon>0$
$$\lim_{{K\to \infty}} \mathbb{P}^K \Big(\sup_{t\in [0,\tau_{1}]} | \langle \nu^{\sigma,K}_{t}, 1\rangle - \bar{n}_{AA}|>\varepsilon\Big) = 0.$$

\bi
\noindent Let us define two stopping times which describe the first time where the process arrives in a $\varepsilon$-neighborhood of a stationary state of the dynamical system.

\begin{align}
\label{temps}
\tau_{A}=\tau_{A}(\varepsilon,K)&= \inf\{t\geq 0, \langle
\nu^{\sigma,K}_{t}, \mathbbm{1}_{aa}\rangle = \langle
\nu^{\sigma,K}_{t}, \mathbbm{1}_{Aa}\rangle = 0\, ; \, \langle
\nu^{\sigma,K}_{t}, \mathbbm{1}_{AA}\rangle - \bar{n}_{AA}| <
\epsilon\},\\ \tau_{a}=\tau_{a}(\varepsilon,K)&= \inf\{t\geq 0,
|\langle \nu^{\sigma,K}_{t}, \mathbbm{1}_{aa}\rangle - \bar{n}_{aa}| <
\varepsilon\, ; \, \langle \nu^{\sigma,K}_{t}, \mathbbm{1}_{Aa}\rangle
= \langle \nu^{\sigma,K}_{t}, \mathbbm{1}_{AA}\rangle =0\}.
\end{align}

Note that $\tau_{A}$ is the extinction time of the population with alleles $a$  and fixation of the allele $A$ and that $\tau_{a}$ is the extinction time of the population with allele $A$ and fixation of the  allele $a$. 

\begin{prop}
\label{cv-fitness} Recall that the $S_{Aa,AA}$ has been defined in \eqref{fitness}.
Let $(z_{K})$ be a sequence of integers such that ${z_{K}\over K}$ converges to $\bar{n}_{AA}$. Then
\be 
\label{cv-fitness1}&&\lim_{\varepsilon\to 0}\lim_{{K\to \infty}} \mathbb{P}^K_{{z_{K}\over K} \delta_{AA} +{1\over K}\delta_{Aa}}(\tau_{a}<\tau_{A}) = \, \frac{[S_{Aa,AA}]_+} 
    { f_{Aa}}\\
   && \label{cv-fitness2}\lim_{\varepsilon\to 0}\lim_{{K\to \infty}} \mathbb{P}^K_{{z_{K}\over K} \delta_{AA} +{1\over K}\delta_{Aa}}(\tau_{A}<\tau_{a}) = 1 - \, \frac{[S_{Aa,AA}]_+} 
    { f_{Aa}}\\
  && \label{cv-fitness3} \forall \eta>0,   \lim_{\varepsilon\to 0}\lim_{{K\to \infty}} \mathbb{P}^K_{{z_{K}\over K} \delta_{AA} +{1\over K}\delta_{Aa}}\Big(\tau_{a}\wedge \tau_{A}> {\eta\over K\mu_{K}}\wedge \tau_{1}\Big)=0 
%&&    \label{cv-fitness4} \forall \varepsilon>0,   \lim_{{K\to \infty}} \mathbb{P}^K_{{z_{K}\over K} \delta_{AA} +{1\over K}\delta_{Aa}}\Big( | \langle \nu^{\sigma,K}_{\theta_{0}}, 1\rangle - \bar{n}_{V_{0}}|\Big) = 1
   .\ee
    \end{prop}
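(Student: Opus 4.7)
My plan is to follow the three-phase invasion/substitution/cleanup decomposition used in \cite{C06,CM09}, adapted to the Mendelian setting where the branching-process approximations must keep track of both $Aa$ and $aa$ mutant types. Throughout, the resident $AA$ population will be kept near its equilibrium $\bar n_{AA}$ by Proposition \ref{PGD}, and the analysis proceeds on the time scale of order $\log K$ (which by \eqref{eq:mu_K-K} is much smaller than $1/(K\mu_K)$, so no further mutation occurs during these three phases; this directly yields \eqref{cv-fitness3} once Phase 1 and Phase 3 are shown to take $O(\log K)$ time and Phase 2 is deterministically bounded).

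In Phase 1, while $\langle \nu^{\sigma,K}_t,\one_{Aa}\rangle+\langle \nu^{\sigma,K}_t,\one_{aa}\rangle\le\varepsilon$ and $|\langle \nu^{\sigma,K}_t,\one_{AA}\rangle-\bar n_{AA}|\le\varepsilon$ (guaranteed by Proposition \ref{PGD} together with its robustness under an $O(\varepsilon)$ perturbation of the death rate), the numbers $(N_{Aa},N_{aa})$ are coupled from above and below with two-type linear birth--death processes. A single $Aa$ individual paired with an $AA$ produces either an $Aa$ or an $AA$ offspring with equal probability; aggregating mother and father roles, the effective rate of producing a new $Aa$ individual per existing $Aa$ individual is $f_{Aa}+O(\varepsilon)$, the effective death rate is $D_{Aa}+C_{Aa,AA}\bar n_{AA}+O(\varepsilon)$, and the rate of producing an $aa$ is $O(N_{Aa}/K)$. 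The plan is therefore to treat $N_{aa}$ as a negligible perturbation during Phase 1 (it appears only once $N_{Aa}$ has grown to order $\sqrt{K}$, after which the process is already on the macroscopic scale) and to reduce the invasion probability to the classical single-type supercritical birth--death extinction probability $[S_{Aa,AA}]_+/f_{Aa}$ in the limit $\varepsilon\to 0$ after $K\to\infty$. A standard martingale/coupling argument shows that either $N_{Aa}+N_{aa}$ hits $0$ or reaches $\varepsilon K$ within time $O(\log K)$; on the surviving branch one finishes Phase 1 at some time $t_1$ with an initial condition for Phase 2 lying in an $O(\varepsilon)$-neighborhood of $\bar n_{AA}\delta_{AA}+\varepsilon\delta_{\text{mut}}$.

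In Phase 2, I would apply Proposition \ref{convergence}: the rescaled process $(\langle\nu^{\sigma,K}_{t},\one_{AA}\rangle,\langle\nu^{\sigma,K}_{t},\one_{Aa}\rangle,\langle\nu^{\sigma,K}_{t},\one_{aa}\rangle)$ is uniformly close on $[t_1,t_1+T]$ to the flow $\varphi_{t-t_1}$ of \eqref{leX} starting from its $\varepsilon$-perturbed initial point. By the hypothesis of Theorem \ref{maintheorem}, $u_A$ lies at positive distance $\eta$ from $J$, so $\partial_1 S(u_A;u_A)\neq 0$; since $\mut\,dS_{Aa,AA}/d\mut(0)>0$ only on half the mutation directions (and $[S_{Aa,AA}]_+$ already encodes this), the hypotheses of Theorem \ref{tube} are met. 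That theorem then guarantees that for $\sigma_0(\eta)$ small enough and $|\mut|\le\sigma_0$, the deterministic trajectory enters any $\varepsilon$-neighborhood of $(0,0,\bar n_{aa})$ in a finite time $T(\varepsilon)$ independent of $K$. One must check the perturbation error controlled by Proposition \ref{convergence} remains smaller than the basin-width of the invariant tube $\mathscr{V}$; this is a standard Gronwall/compactness argument. Phase 3 is then symmetric to Phase 1: near $(0,0,\bar n_{aa})$ the numbers $(N_{AA},N_{Aa})$ are dominated by a subcritical two-type branching process with fitness $S_{Aa,aa}<0$ (which follows from Section \ref{mandel}, since $dS_{Aa,aa}/d\mut(0)=-dS_{Aa,AA}/d\mut(0)$), so $(N_{AA},N_{Aa})$ hits $0$ in time $O(\log K)$, at which point $\tau_a$ is realised. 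Conversely, on the complementary event where the Phase 1 branching process goes extinct, $\tau_A$ is reached in time $O(\log K)$.

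The principal obstacle, and the one I would spend most care on, is the coupling in Phase 1. Unlike the clonal case in \cite{C06}, here the "mutant" consists of two sub-types $Aa$ and $aa$ whose birth rates are intertwined through the term $f_{AA}N_{AA}+\tfrac{1}{2}f_{Aa}N_{Aa}$ in \eqref{naissance}, and the $aa$ subpopulation is fed nonlinearly by $Aa\times Aa$ matings. The clean way is to show that, up to the stopping time when $N_{Aa}+N_{aa}$ first reaches $\varepsilon K$, one can sandwich $N_{Aa}$ between two single-type linear birth--death chains with parameters $(f_{Aa}\pm\delta(\varepsilon),\, D_{Aa}+C_{Aa,AA}\bar n_{AA}\pm\delta(\varepsilon))$, where $\delta(\varepsilon)\to 0$ as $\varepsilon\to 0$, and to show that $N_{aa}$ stays at zero with probability $1-O(\varepsilon)$ on that time scale. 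Passing to the limit $K\to\infty$ then $\varepsilon\to 0$ yields \eqref{cv-fitness1} and \eqref{cv-fitness2}; combining with the $O(\log K)$ durations of all three phases and the rare-mutation scaling \eqref{eq:mu_K-K} gives \eqref{cv-fitness3}.
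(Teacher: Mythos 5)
Your overall three-phase architecture matches the paper's, and Phases 2 and 3 are handled essentially as in the paper (Proposition \ref{convergence} plus Theorems \ref{wsloc} and \ref{tube} for the deterministic sweep, a subcritical two-type comparison for the cleanup). The genuine gap is in Phase 1, which you yourself flag as the principal obstacle and then resolve incorrectly. You propose to sandwich $N_{Aa}$ alone between single-type linear birth--death chains and to argue that ``$N_{aa}$ stays at zero with probability $1-O(\varepsilon)$'' up to the time $N_{Aa}+N_{aa}$ first reaches $\varepsilon K$. That claim is false, and it contradicts your own earlier observation: by \eqref{naissance} the rate of $aa$ births is of order $(f_{Aa}N_{Aa}/2)^2/(f_{AA}N_{AA})\sim N_{Aa}^2/K$, so the first $aa$ individual appears already when $N_{Aa}\sim\sqrt K$, long before the mutant density reaches $\varepsilon$, and the expected number of $aa$ births before your stopping time is of order $\varepsilon^2K\to\infty$. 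Once an $aa$ individual exists it produces $Aa$ offspring at the $O(1)$ per-capita rate $\approx 2f_{aa}$ by mating with residents, and this feedback cannot be absorbed into a $\delta(\varepsilon)$ perturbation of the single-type rates unless one first controls the ratio $N_{aa}/N_{Aa}$ --- which requires tracking $N_{aa}$, i.e., a genuinely two-type analysis.

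This is precisely where the paper departs from the clonal argument of \cite{C06}: it sandwiches $(\langle\nu^{\sigma,K}_t,\mathbbm{1}_{Aa}\rangle,\langle\nu^{\sigma,K}_t,\mathbbm{1}_{aa}\rangle)$ between two bi-type branching processes whose $Aa$-birth rate is $f_{Aa}y+2f_{aa}z+o(\varepsilon)(y+z)$ and whose $aa$-birth rate is $(f_{Aa}y/2+f_{aa}z)\,o(\varepsilon)$, and then computes the extinction probabilities through the generating-function ODE $\dot q=Y(\varepsilon,q)$. The relevant stable fixed point has first coordinate $(D_{Aa}+C_{Aa,AA}\bar n_{AA})/f_{Aa}+\mathcal{O}(\varepsilon)$ when $S_{Aa,AA}>0$, which is what yields $[S_{Aa,AA}]_+/f_{Aa}$ after letting $\varepsilon\to0$; the smallness enters only through the $aa$-creation rate, not through the order-one $aa\to Aa$ feedback. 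Your single-type reduction could perhaps be salvaged by a two-stage argument (reach a fixed level $M$ with no $aa$ born, then show survival up to $\varepsilon K$ occurs with probability $1-o_M(1)$ from level $M$ onward), but as written the coupling you describe does not hold, and this bi-type computation is the one step of the proposition that is genuinely new relative to the haploid case.
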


\begin{proof} The proof is inspired by the proof of Lemma 3 in \cite{C06}.
We introduce the following stopping times. 
$$R^K_{\varepsilon } = \inf\{t\geq 0\, ; |\langle \nu_{t}^{\sigma,K}, \mathbbm{1}_{AA}\rangle-\bar{n}_{AA}|\geq \varepsilon\},$$
$$S^K_{\varepsilon}=\inf\{ t\geq 0\,; \langle \nu_{t}^{\sigma,K}, \mathbbm{1}_{Aa}\rangle + \langle \nu_{t}^{\sigma,K}, \mathbbm{1}_{aa}\rangle\geq \varepsilon\}.$$
$R^K_{\varepsilon }$ is the time of drift of the resident population $AA$ away from its equilibrium, $S^K_{\varepsilon}$ is the time of invasion of the mutant allele $a$, either if the population with genotype $Aa$ is sufficiently large or the one with genotype $aa$. 

Assume that $\langle \nu_{0}^K, \mathbbm{1}_{Aa}\rangle = {1\over K}$. Using Proposition \ref{PGD}, second part, one can prove as in \cite{C06} that there exist $\rho, V>0$  such that, for $K$ large enough,
\ben\mathbb{P}\left({\rho\over Ku_{K}} <\tau_{1}\right) \geq 1-\varepsilon \, \hbox{ and }\, \mathbb{P}(S^K_{\varepsilon} \wedge \tau_{1}\wedge e^{K V} < R^K_{\varepsilon })\geq 1- \varepsilon.\een

Then, on $[0,\tau_{1}\wedge S_\varepsilon^K\wedge R^K_{\varepsilon}]$, one has $ \bar n_{AA} -  \varepsilon \leq \langle \nu_{t}^{\sigma,K}, \mathbbm{1}_{AA}\rangle\leq  \bar n_{AA} +\varepsilon\ $ and $\ \langle \nu_{t}^{\sigma,K}, \mathbbm{1}_{Aa}\rangle \leq  \varepsilon, \, \langle \nu_{t}^{\sigma,K}, \mathbbm{1}_{aa}\rangle \leq  \varepsilon$. 

\me
Using \eqref{naissance}, \eqref{mort} and  by minorizing or majorizing the birth and death rates,  it can be easily checked that, for $K$ large enough,
almost surely, the process $ (\langle
\nu_t^{\sigma,K},\mathbbm{1}_{\{Aa\}}\rangle, \langle
\nu_t^{\sigma,K},\mathbbm{1}_{\{aa\}}\rangle)$ 
is stochastically lower-bounded and upper-bounded by two normalized
bi-type branching processes ${\Lambda^{1}\over
  K}=(\frac{\Lambda^{11,\varepsilon}_t}{K},
\frac{\Lambda^{12,\varepsilon}_t}{K})_{t\in\mathbb{R}_{+}}$
and ${\Lambda^{2}\over K}=
(\frac{\Lambda^{21,\varepsilon}_t}{K},
\frac{\Lambda^{22,\varepsilon}_t}{K})_{t\in\mathbb{R}_{+}}$.

\me
 The branching processes $\Lambda^{1}$ and $\Lambda^{2}$ have initial condition $(1,0)$  and  birth rates for a state $(y,z)$ of the form (for $i=1,2$),
$$N^i_{{Aa}}(\varepsilon, y,z)=f_{Aa}y + 2 f_{aa} z + o_{1}(\varepsilon)(y+z)\ ;\  N^i_{{aa}}(\varepsilon,y,z)=(f_{Aa}{y\over 2} +  f_{aa} z)\,o_{2}(\varepsilon),$$
and  death rates
$$M^i_{Aa}(\varepsilon,y,z) =  (D_{Aa} + C_{Aa,AA} \bar n_{AA})\, y +
o_{3}(\varepsilon) (y+z)\;,
$$
$$
 M^i_{aa} (\varepsilon,y,z)=  (D_{aa} + C_{aa,AA} \bar n_{AA})\, z  + o_{4}(\varepsilon)(y+z)\;.$$

Moreover we can check that the $o_{i}(\varepsilon)$ don't depend on $K$. 

\me
Let us denote by $q^i_{1}(t)$ and $q^i_{2}(t)$  the probabilities of extinction of the process $\Lambda^i$ before time $t$,  starting respectively from $(1,0)$ or $(0,1)$. These probabilities correspond to the extinction of the allele $a$. Using the generating function, it can be proved (see \cite{AN72}) that the vector $q^i(t)$ is solution of the differential system ${\dot q^i}=  Y^i(\varepsilon, q^i)$ where the vector field $ Y^i$ is of class $C^2$ and 
$$
 Y^i\big(0,(q_{1},q_{2})\big)=\left(\begin{array}{c}
f_{Aa}\, q_{1}^2  + (D_{Aa} + C_{Aa,AA}\, \bar n_{AA}) - (f_{Aa}+
D_{Aa} + C_{Aa,AA}\, \bar n_{AA}) \, q_{1} \nonumber\\
2 f_{aa}\, q_{1}q_{2}  + 
(D_{aa} + C_{aa,AA}\, \bar n_{AA}) - ( 2 f_{aa}+
D_{aa} + C_{aa,AA}\, \bar n_{AA})\, q_{2}
\end{array}\right)\; .
$$
Note that this vector is independent of $i$. 
\end{proof}

\begin{lem} For any $\varepsilon>0$ small enough, we have the following
  properties. 
\begin{enumerate}[i)]
\item The vector field $ Y^i(\varepsilon, \,\cdot\,)$ vanishes at the point
$M_{0}=(1,1)$.
\item If $S_{Aa,AA}<0$, this fixed point is stable, and the
  trajectory emanating from the origin converges to this fixed point.
\item If $S_{Aa,AA}>0$, this fixed point is unstable. There is
  another fixed point
$$
P^i_{\varepsilon}=\left(\begin{array}{c}
\frac{\dst D_{Aa} + C_{Aa,AA}\, \bar n_{AA}}{\dst f_{Aa}}\\
\frac{\dst f_{Aa}\;(D_{aa} + C_{aa,AA}\, \bar n_{AA}) }{\dst
( 2 \;f_{Aa}\;f_{aa}+D_{aa} + C_{aa,AA}\, \bar n_{AA})-2 \;f_{aa}
  \;(D_{Aa} + C_{Aa,AA}\, \bar n_{AA})}
\end{array}\right)+\mathcal{O}^i(\varepsilon)\;,
$$
which is stable and the
  trajectory emanating from the origin converges to this fixed point.
\end{enumerate} 
\end{lem}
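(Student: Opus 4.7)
The plan is to proceed in two stages, first treating the unperturbed case $\varepsilon=0$ in which $Y^i(0,\cdot)$ has the explicit form given, and then extending to small $\varepsilon>0$ by continuity and implicit function arguments. For part (i), one substitutes $(q_1,q_2)=(1,1)$ into $Y^i(0,\cdot)$: the first component becomes $f_{Aa}+(D_{Aa}+C_{Aa,AA}\bar{n}_{AA})-(f_{Aa}+D_{Aa}+C_{Aa,AA}\bar{n}_{AA})=0$, and similarly the second vanishes. For $\varepsilon>0$, the point $(1,1)$ remains a zero of $Y^i(\varepsilon,\cdot)$ because, by construction, $Y^i(\varepsilon,\cdot)$ is the Kolmogorov backward generator for the extinction probability of the bi-type branching process $\Lambda^i$, and for any such process certain extinction ($q=\vec 1$) is preserved by the dynamics.

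For the local stability at $M_0=(1,1)$ I would compute
\[
DY^i(0,(1,1))=\begin{pmatrix} S_{Aa,AA} & 0 \\ 2 f_{aa} & -(D_{aa}+C_{aa,AA}\bar{n}_{AA}) \end{pmatrix},
\]
which is lower triangular with eigenvalues $S_{Aa,AA}$ and $-(D_{aa}+C_{aa,AA}\bar{n}_{AA})$. The second is strictly negative by assumptions (A1)--(A2), so stability is decided by the sign of the first. Since $Y^i$ depends smoothly on $\varepsilon$ and the Jacobian varies continuously, the eigenvalues preserve their signs for $|\varepsilon|$ small enough, which gives the local stability claim in (ii) and the saddle character of $M_0$ in (iii).

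For the second fixed point in (iii), I observe that the first component of $Y^i(0,\cdot)$ factors as $f_{Aa}(q_1-1)(q_1-q_1^\star)$ with $q_1^\star=(D_{Aa}+C_{Aa,AA}\bar{n}_{AA})/f_{Aa}$, and $q_1^\star\in(0,1)$ exactly when $S_{Aa,AA}>0$. Substituting $q_1=q_1^\star$ into the second component yields a linear equation in $q_2$ whose solution reproduces the $\varepsilon=0$ value of $P^i_\varepsilon$ stated in the lemma. The Jacobian $DY^i(0,P^i_0)$ is again lower triangular, with $(1,1)$ entry $-S_{Aa,AA}<0$ and $(2,2)$ entry $-\frac{2f_{aa}}{f_{Aa}}S_{Aa,AA}-(D_{aa}+C_{aa,AA}\bar{n}_{AA})<0$, so $P^i_0$ is locally asymptotically stable. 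Persistence of $P^i_\varepsilon$ and preservation of its stability for small $\varepsilon>0$ then follow from the implicit function theorem applied to $Y^i(\varepsilon,q)=0$ at $(0,P^i_0)$, together with continuity of the spectrum.

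For the global convergence of the trajectory emanating from the origin, I would exploit the probabilistic meaning of the system: by construction $q^i(t)$ is the vector of extinction probabilities by time $t$ of the bi-type branching process $\Lambda^i$, hence each component is non-decreasing in $t$ and bounded above by $1$. By monotone convergence the trajectory converges, and by classical multi-type branching theory the limit coincides with the componentwise minimal zero of $Y^i(\varepsilon,\cdot)$ in $[0,1]^2$. In the subcritical regime $S_{Aa,AA}<0$ the process dies out almost surely and this minimal zero is $M_0=(1,1)$; in the supercritical regime $S_{Aa,AA}>0$ the survival probability is positive and the minimal zero is $P^i_\varepsilon$, whose components lie strictly below $1$. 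The main obstacle I anticipate is verifying that for $\varepsilon>0$ the bracketing processes $\Lambda^i$ constructed in the proof of Proposition~\ref{cv-fitness} remain genuine bi-type branching processes (so that the minimal-zero extinction characterization applies); this reduces to checking that the $o_i(\varepsilon)$ perturbations have been absorbed into rates that are still linear in $(y,z)$, which is the case by construction.
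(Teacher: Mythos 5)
Your proof is correct, and the local analysis (parts i), the Jacobian computations at $M_{0}$ and at $P^i_{0}$, and the persistence of the hyperbolic fixed points for small $\varepsilon$ via the implicit function theorem) is essentially an explicit version of what the paper leaves to ``direct computation'' and to the stability of hyperbolic fixed points under $C^2$-small perturbations. Where you genuinely diverge is in the convergence of the trajectory emanating from the origin. The paper argues purely dynamically: at $\varepsilon=0$ the orbit of $(0,0)$ enters a ball $B_{r_{0}/2}$ around the attracting fixed point in some finite time $T_{0}$ (finiteness being checked by hand using that $q_{1}(t)\to 1$ and that $Y_{2}(0,\cdot)$ is linear in $q_{2}$), the ball $B_{r_{0}}$ remains in the basin of attraction for all small $\varepsilon$, and continuity of the time-$T_{0}$ flow map $\Phi^{\varepsilon}_{T_{0}}$ in $\varepsilon$ finishes the argument. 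You instead exploit the probabilistic meaning of $q^i(t)$: componentwise monotonicity in $t$ forces convergence, the limit must be an equilibrium, and sub/supercriticality selects which one. Your route is shorter and gives convergence for the actual trajectory of interest without any uniformity-of-basins bookkeeping; its cost is that you must justify that the $\varepsilon$-perturbed comparison processes are genuine bi-type branching processes (which you correctly note holds because the perturbed rates remain linear in $(y,z)$) and, if you invoke the ``minimal root'' characterization from Athreya--Ney, you should be aware that the process is reducible at $\varepsilon=0$ (type $aa$ is never produced), so it is cleaner to argue as you partly do: monotone bounded implies convergent, the limit is one of the exactly two zeros in $[0,1]^2$, and positive survival probability in the supercritical case excludes $(1,1)$. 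One small side remark: your explicit solve for $q_{2}$ at $q_{1}=q_{1}^{\star}$ yields the denominator $f_{Aa}\,(2f_{aa}+D_{aa}+C_{aa,AA}\bar n_{AA})-2f_{aa}(D_{Aa}+C_{Aa,AA}\bar n_{AA})$, which indicates a missing factor of $f_{Aa}$ in front of $D_{aa}+C_{aa,AA}\bar n_{AA}$ in the displayed formula for $P^i_{\varepsilon}$ in the lemma; your value is the correct one.
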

\begin{proof}
Assertion \textsl{i)} follows by a direct computation.

The difference between  $ Y^i(\varepsilon, \,\cdot\,)$ and 
$ Y\big(0,\,\cdot\,\big)$ is of order $\varepsilon$
 in $C^{2}$. The first parts of assertions \textsl{ii)} and
 \textsl{iii)} follow 
at once from the similar results for $ Y\big(0,\,\cdot\,\big)$ and
the stability of hyperbolic fixed points (see
for example \cite{gh}).   
Note that in case iii), $$
2 f_{aa}\, q_{1}-( 2 f_{aa}+
D_{aa} + C_{aa,AA}\, \bar n_{AA})<0,
$$
since $q_{1} \in [0,1]$. 

\me
We now prove the second part of case \textsl{ii)}. 
Let $\Phi^{\varepsilon}_{t}$ denote the flow of
the vector field $ Y(\varepsilon, \,\cdot\,)$.
Since the fixed points $M_{0}$ is stable  for $
Y\big(0,\,\cdot\,\big)$, there is a number $r_{0}>0$, such that for any
$\varepsilon>0$ small enough, the ball $B_{r_{0}}(M_{0})$ centered in
$M_{0}$ and of radius $r_{0}$ is attracted to the fixed point $M_{0}$ by
the flow $\Phi^{\varepsilon}_{t}$. Let $T_{0}>0$ denote the smallest time
such that $\Phi^{0}_{t}\big((0,0)\big)\in B_{r_{0}/2}(M_{0})$. This time
is finite since $Y\big(0,(0,0)\big)\neq 0$,
$q_{1}(t)=\Phi^{0}_{t}\big((0,0)\big)_{1}$ converges to $1$ when $t$
tends to infinity, and
$Y_{2}\big(0,(q_{1},q_{2})\big)$ is linear in $q_{2}$. By continuity in
$\varepsilon$ of the map 
$\Phi^{\varepsilon}_{T_{0}}$ (see \cite{gh}), we conclude that for any $\varepsilon>0$
small enough, $\Phi^{\varepsilon}_{T_{0}}\big((0,0)\big)\in
B_{r_{0}}(M_{0})$. The second part of assertion  \textsl{ii)} follows.

\me
The second part of assertion \textsl{iii)} is proved by similar arguments, noting that the fixed point $P^\varepsilon$ depends continuously in $\varepsilon$.
\end{proof}

\me
 We conclude the proof of Proposition \ref{cv-fitness} by similar arguments as in \cite{C06} or in \cite{CM09}, using Theorems \ref{wsloc} and \ref{tube}.

\bi
% \bibliographystyle{abbrv}
% \bibliography{biblio-bio,biblio-math}

 \end{document}